\renewcommand{\H}{\mathbb{H}}
\newcommand{\NN}{\mathbb{N}}
\newcommand{\RR}{\mathbb{R}}
\newcommand{\pa}{\partial}
\newcommand{\al}{\alpha}
\newcommand{\ga}{\gamma}
\newcommand{\de}{\delta}
\newcommand{\ep}{\varepsilon}
\newtheorem{theorem}{Theorem}[section]
\newtheorem{corollary}[theorem]{Corollary}
\newtheorem{lemma}[theorem]{Lemma}
\newtheorem{proposition}[theorem]{Proposition}
\newtheorem{remark}[theorem]{Remark}
\theoremstyle{remark}
\DeclareMathOperator{\arccosh}{\mathrm{arccosh}}
\DeclareMathOperator{\osc}{\mathrm{osc}}
\DeclareMathOperator{\diam}{\mathrm{diam}}
\newcommand{\oscH}{\osc(H)}
\title[Almost CMC hypersurfaces in the hyperbolic space]{Quantitative Stability for hypersurfaces with almost constant mean curvature  in the Hyperbolic Space}
\author{Giulio Ciraolo, Luigi Vezzoni}
\date{\today}
\address{Dipartimento di Matematica e Informatica, Universit\`a di Palermo, Via Archirafi 34, 90123 Palermo, Italy} \email{giulio.ciraolo@unipa.it}
\address{Dipartimento di Matematica G. Peano, Universit\`a di Torino, Via Carlo Alberto 10, 10123 Torino, Italy.} \email{luigi.vezzoni@unito.it}
\keywords{Hyperbolic geometry, method of moving planes, Alexandrov Soap Bubble Theorem, stability, mean curvature, pinching.}
    \subjclass{Primary 53C20, 53C21; Secondary 35B50, 53C24. }
\begin{document}

\maketitle

\begin{abstract}
We provide sharp stability estimates for the Alexandrov Soap Bubble Theorem in the hyperbolic space. The closeness to a single sphere is quantified in terms of the  dimension, the measure of the hypersurface and the radius of the touching ball condition.  As consequence we obtain a new pinching result for hypersurfaces in the hyperbolic space.   

Our approach is based on the method of moving planes. In this context we carefully review the method and we provide the first quantitative study in the hyperbolic space.
\end{abstract}

\tableofcontents

\section{Introduction}
In this paper we study  compact embedded hypersurfaces in the hyperbolic space in relation to the mean curvature. The subject has been largely studied in literature (see e.g. \cite{Br,DCL, desilva,Gr,GuanSpruck1,GuanSpruck,GuanSpruckSzapiel,GuanSpruckXiao,Hs,Ko,Levitt,lopez1,lopez2,Mo,nelli,pacard,rosenberg,Yau} and the references therein).

Our starting point is the celebrated Alexandrov's theorem in the hyperbolic context:

\medskip

\noindent {\bf Alexandrov's theorem.} {\em A connected closed $C^2$-regular hypersurface $S$ embedded in the hyperbolic space has constant mean curvature if and only if it is a sphere.}

\medskip
The theorem was proved by Alexandrov in \cite{Al0} by using the method of moving planes and extends to the Euclidean space and the hemisphere \cite{Al0,Al1,Al2}. The method uses maximum principles and consists in proving that the surface is symmetric in any direction. Then the assertion follows by the following characterization of the sphere: a compact embedded hypersurface $S$ in the hyperbolic space with center of mass $\mathcal O$ is a sphere if and only if for every direction $\omega$ there exists a hyperbolic hyperplane $\pi_\omega$ of symmetry of $S$ orthogonal to $\omega$ at $\mathcal O$ (see lemma 2.2). 

In this paper we study the method of moving planes in the hyperbolic space from a quantitative point of view and we obtain sharp stability estimates for Alexandrov's theorem. We consider a $C^2$-regular, connected, closed hypersurface $S$ embedded in the hyperbolic space. Since $S$ is closed and embedded, there exists a bounded domain $\Omega$ such that $S=\partial \Omega$. We say that $S$ (or equivalently $\Omega$) satisfies a uniform touching ball condition of radius $\rho$ if for any point $p \in S $ there exist two balls ${\sf B}_\rho^-$ and ${\sf B}_\rho^+$ of radius $\rho$, with ${\sf B}_\rho^-$ contained $\Omega$ and ${\sf B}_\rho^+$ outside $\Omega$, which are tangent to $S$ at $p$. Our main result is the following.

\begin{theorem}\label{main}
Let $S$ be a $C^2$-regular, connected, closed hypersurface embedded in the $n$-dimensional hyperbolic space satisfying a uniform touching ball condition of radius $\rho$. There exist constants $\ep,\, C>0$ such that if the mean curvature $H$ of $S$ satisfies
\begin{equation}\label{H quasi const}
\oscH \leq \ep,
\end{equation}
then there are two concentric balls ${\sf B}_{r}$ and ${\sf B}_{R}$ such that
\begin{equation}\label{Bri Om Bre}
S \subset \overline{\sf B}_{R} \setminus {\sf B}_{r},
\end{equation}
and
\begin{equation}\label{stability radii}
R-r \leq C \oscH.
\end{equation}
The constants $\ep$ and $C$ depend only on $n$ and upper bounds on $\rho^{-1}$ and on the area of $S$.
\end{theorem}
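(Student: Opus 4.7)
The plan is to develop a quantitative version of Alexandrov's moving planes method in the hyperbolic setting. For each unit direction $\omega$ at a reference point, one slides a totally geodesic hyperplane $\pi_t^\omega$ orthogonal to the geodesic in direction $\omega$ from infinity toward $\Omega$, and reflects the cap of $\Omega$ cut off by $\pi_t^\omega$ using the corresponding hyperbolic isometry. Let $t^*(\omega)$ denote the critical position where the reflected cap first fails to be strictly contained in $\Omega$. In the CMC case $\pi_{t^*(\omega)}^\omega$ is a plane of symmetry; here the goal is to show it is an \emph{approximate} plane of symmetry, with defect controlled linearly by $\oscH$.

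The uniform touching ball condition ensures that near any critical touching point both $S$ and its reflected image $S'_\omega$ can be written as graphs over a totally geodesic hyperplane with $C^{1,1}$ bounds depending only on $\rho^{-1}$. In these coordinates the mean curvature becomes a uniformly elliptic quasilinear operator (with coefficients determined by the hyperbolic background metric), and the difference $w$ of the two graph functions satisfies an inequality of the form $|Lw|\le C\oscH$ with $w\geq 0$ on the domain of $S'_\omega$. At an interior touching point one applies the Hopf boundary lemma in quantitative form; at a touching point lying on $\pi_{t^*(\omega)}^\omega$ a Serrin-type corner lemma, adapted to $\mathbb{H}^n$, is needed. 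Propagating smallness along $S$ via chains of overlapping balls (whose number is controlled by the area of $S$ and $\rho$) yields the global approximate symmetry estimate
\[
\sup_{q\in S'_\omega}\dist_{\mathbb{H}^n}(q,S)\leq C\,\oscH.
\]

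Next the family of approximate symmetries is used to isolate an approximate center $\mathcal O$: picking $n$ linearly independent directions, the corresponding critical hyperplanes intersect at (or very near) a single point, and an elementary argument based on approximate preservation of the center of mass under reflection shows that every other critical hyperplane passes within distance $C\oscH$ of $\mathcal O$. A quantitative version of the characterization preceding the statement (Lemma 2.2) then upgrades approximate symmetry through $\mathcal O$ in every direction to containment of $S$ in a thin spherical annulus ${\sf B}_r\subset\Omega\subset{\sf B}_R$ centered at $\mathcal O$ with $R-r\leq C\oscH$, yielding \eqref{Bri Om Bre} and \eqref{stability radii}.

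The main obstacle is the quantitative Serrin corner lemma in the hyperbolic setting. When the touching point lies on $\pi_{t^*(\omega)}^\omega$ itself, $S$ and $S'_\omega$ are tangent to high order and the standard Hopf lemma is not sufficient; one must construct auxiliary barriers and work with second-order expansions of the mean curvature operator. In $\mathbb{H}^n$ the curvature of the ambient space produces additional nonlinear terms in these expansions, so the smallness threshold $\ep$ must be chosen to guarantee that the leading elliptic comparison dominates these perturbations uniformly in terms of $\rho^{-1}$ and the area of $S$. A secondary technical point is to phrase every intermediate estimate in hyperbolically invariant quantities, so that the final constants depend only on the data asserted in the theorem.
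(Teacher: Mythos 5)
Your overall strategy coincides with the paper's: quantitative moving planes giving approximate symmetry about each critical hyperplane with defect linear in $\oscH$, propagation of smallness along Harnack chains whose length is controlled by $|S|_g$ and $\rho$, an approximate center $\mathcal O$ obtained from $n$ coordinate directions, and the final annulus estimate. However, there are two points where the proposal either leaves a real gap or misidentifies the difficulty.

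First, you assert that ``picking $n$ linearly independent directions, the corresponding critical hyperplanes intersect at (or very near) a single point.'' In $\mathbb{H}^n$ this is false a priori: two hyperplanes orthogonal to two orthogonal geodesics through a base point can be ultraparallel, so the intersection $\bigcap_i \pi_{e_i}$ may be empty. This is precisely the hyperbolic-specific obstruction the paper flags in the introduction and resolves in lemma \ref{6.1}, using the \emph{second} part of theorem \ref{thm approx symmetry 1 direction} (that $\Omega$ lies in a $C\oscH$-neighborhood of $\Sigma\cup\Sigma^\pi$, hence $|\Omega\triangle\Omega^{\pi_\omega}|_g\le C\oscH$) together with an iterated volume argument: if two critical hyperplanes were disjoint, repeated reflection would push essentially all of $|\Omega|_g$ beyond $\diam(S)$, a contradiction once $\oscH$ is small. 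Your proposal contains no substitute for this step, and without it the approximate center $\mathcal O$ is not defined.

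Second, you identify as ``the main obstacle'' a quantitative Serrin-type corner lemma with second-order expansions. No such lemma is needed here. In the Alexandrov configuration, when the tangency point $p_0$ lies on the critical hyperplane, $w=u-\hat u$ vanishes on the entire flat portion of the half-ball (reflection fixes $\pi$ pointwise) and $\nabla w(p_0)=0$ because the two graphs share the tangent plane at $p_0$; the qualitative proof therefore needs only Hopf's lemma, and the quantitative proof (Cases 3--4 of the paper) needs only a Carleson/boundary Harnack estimate \cite{BCN} combined with a quantitative Hopf-type lemma and boundary Schauder estimates. The Serrin corner lemma is required for overdetermined problems, not for Alexandrov's theorem. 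The genuinely hyperbolic technical work lies elsewhere: uniform control of parallel transport (propositions \ref{STP1}--\ref{STP2}), curvature bounds for the projections $U''$ of $S\cap\pi$ onto $\pi_\infty$ needed to apply the Carleson estimate in a Euclidean chart (section \ref{subsect Luigi}), and the quantitative connectivity of $\Sigma_t$.
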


In theorem \ref{main}, $\oscH$ is the oscillation of H, i.e. $\oscH:=\max_M H-\min_M H$. Note that the assumption $\oscH \leq \ep$ is equivalent to require that $H$ is close to a constant in $C^0$-norm.    
We remark that the quantitative bound in \eqref{stability radii} is sharp in the sense that no function of $\oscH$ converging to zero more than linearly can appear on the right hand side of \eqref{stability radii}, as can be seen by explicit calculations considering a small perturbation of the sphere. We prefer to state theorem \ref{main} by assuming that $S$ is connected, however the theorem still holds if we just assume that $\Omega$ is connected (and the proof remains the same).

Theorem \ref{main} has some remarkable consequence that we give in the following corollary.

\begin{corollary}\label{main2}
Let $\rho_0,A_0>0$ and $n \in \NN$ be fixed. There exists $\ep>0$, depending on $n$, $\rho_0$ and $A_0$, such that if $S$ is a connected closed $C^2$ hypersurface embedded in the hyperbolic space having area bounded by $A_0$, satisfying a touching ball condition of radius $\rho \geq \rho_0$, and whose mean curvature $H$ satisfies
\begin{equation*}
\oscH <\ep\,,
\end{equation*}
then $S$ is diffeomorphic to  a sphere. 

Moreover $S$ is $C^{1,\alpha}$-close to a sphere, i.e. there exists a $C^{1,\alpha}$-map $\Psi\colon  \partial {\sf B}_{r}\to \RR$ such that
$$
F(x)=\exp_x(\Psi(x)N_x)
$$
defines a $C^{1,\alpha}$-diffeomorphism $F\colon \partial {\sf B}_{r}\to S$ and 
\begin{equation} \label{Lipschitz_bound}
\|\Psi\|_{C^{1,\alpha}(\partial B_{r})} \leq C \oscH \,,
\end{equation}
for some $0<\alpha<1$ and where $C$ depends only on $n$, $\rho$ and $A_0$.
\end{corollary}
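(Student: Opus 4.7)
The strategy is to use Theorem \ref{main} to trap $S$ in a thin annular region and then realize $S$ as a normal graph over the inner geodesic sphere $\partial {\sf B}_r$. Fix $\ep$ no larger than the threshold of Theorem \ref{main}, and further impose $C\,\oscH < \rho_0/2$, with $C$ the constant in \eqref{stability radii}. Applying Theorem \ref{main} then produces concentric balls ${\sf B}_r \subset \Omega \subset \overline{\sf B}_R$ with
\begin{equation*}
R - r \leq C\,\oscH < \rho_0/2.
\end{equation*}
A bound on the mean curvature (forced by the touching ball condition) together with the area bound $|S|\leq A_0$ confines $r,R$ to a compact interval depending only on $n,\rho_0,A_0$, so the inner sphere $\partial {\sf B}_r$ has non-degenerate geometry.

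\textbf{Step 1: graphicality.} In $\H^n$ the outward normal exponential map from the geodesic sphere $\partial {\sf B}_r$ is a global diffeomorphism $\Phi\colon \partial {\sf B}_r\times [0,\infty)\to \H^n\setminus {\sf B}_r$. Since $S\subset \overline{\sf B}_R\setminus {\sf B}_r$, every outward normal geodesic from $\partial {\sf B}_r$ meets $S$. The interior touching ball condition of radius $\rho_0$ forces the tangent plane to $S$ at any such intersection to be almost parallel to the tangent plane of a concentric geodesic sphere (the angular deviation is controlled by $(R-r)/\rho_0$), so the crossing is transverse. A hypothetical second crossing along the same geodesic would produce two points of $S$ at geodesic distance $<\rho_0/2$ sharing an almost radial direction; this is ruled out by the interior tangent ball of radius $\rho_0$ (the ball would necessarily cross itself or $S$). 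Consequently the first-hit function $\Psi\colon \partial {\sf B}_r\to[0,R-r]$ is well-defined, and $F(x) = \exp_x(\Psi(x)N_x)$ is a bijection onto $S$. Transversality makes $F$ a local diffeomorphism, hence a global diffeomorphism, so $S\cong \Sbb^{n-1}$.

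\textbf{Step 2: $C^{1,\alpha}$ estimate.} The two-sided touching ball condition provides a uniform bound
\begin{equation*}
\|\Psi\|_{C^{1,1}(\partial {\sf B}_r)} \leq M = M(n,\rho_0,A_0),
\end{equation*}
while Step 1 combined with Theorem \ref{main} gives $\|\Psi\|_{C^0}\leq R-r\leq C\,\oscH$. A first application of standard Hölder interpolation already yields the sub-linear estimate $\|\Psi\|_{C^{1,\alpha}}\leq C\,\oscH^{(1-\alpha)/2}$, which suffices for a qualitative closeness statement. For the sharp linear bound \eqref{Lipschitz_bound}, write the prescribed mean curvature equation satisfied by $\Psi$ as $\Ecal(\Psi) = h$, where $\Ecal$ is a uniformly elliptic quasilinear operator on $\partial {\sf B}_r$ with coefficients that are uniformly bounded thanks to the $C^{1,1}$ control of $\Psi$, and $h$ denotes the mean curvature of $S$. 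Choosing the radius of the background sphere so that $\Ecal(0)$ matches a suitable constant value of $h$, the defect $h - \Ecal(0)$ is of order $\oscH$ in $L^\infty$, and classical $W^{2,p}$-theory together with the Sobolev embedding $W^{2,p}(\partial {\sf B}_r)\hookrightarrow C^{1,\alpha}(\partial {\sf B}_r)$ (for $p>n-1$ and $\alpha = 1-(n-1)/p$) upgrades this to \eqref{Lipschitz_bound}.

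\textbf{Main obstacle.} The principal point is Step 1: proving that $\Psi$ is single-valued. This requires a careful balance between the annular width $R-r = O(\oscH)$ and the touching ball radius $\rho_0$, and forces $\ep$ in the corollary to be taken strictly smaller than the one produced by Theorem \ref{main}. Once $\Psi$ is in hand, Step 2 is essentially routine elliptic regularity on the fixed smooth background $\partial {\sf B}_r$.
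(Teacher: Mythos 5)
Your argument is correct and reaches the same conclusion, but the route to the key step --- that $S$ is a normal graph over $\partial{\sf B}_r$ --- is genuinely different from the paper's. The paper proves graphicality by re-entering the moving-planes machinery: for each $p\in S$ it builds geodesic cones $\mathcal C^\pm(p)$ through $\partial {\sf B}_t(\mathcal O)$ and shows via the critical hyperplanes and lemma \ref{lemmarta} that $\mathcal C^-(p)\subset\overline\Omega$ and $\mathcal C^+(p)\subset\Omega^c$; choosing $t=r-\sqrt{C\oscH}$ this cone condition directly yields the intermediate bound $\|\Psi\|_{C^1}\leq C(\oscH)^{1/2}$ before the elliptic-regularity upgrade. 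You instead use only the \emph{statement} of theorem \ref{main} together with the touching ball hypothesis: since the interior tangent ball ${\sf B}_\rho(\exp_p(\rho N_p))$ must sit inside $\overline{\sf B}_R$ while $2\rho>R-r$, the hyperbolic law of cosines forces $N_p$ to point nearly radially toward $\mathcal O$ at every $p\in S$, which gives transversality and excludes a second radial crossing. This is more self-contained (it does not rely on lemma \ref{lemmarta} or the internal structure of the proof of theorem \ref{main}), at the price of needing the $C^{1,1}$ bound from the two-sided ball condition plus interpolation to recover the intermediate H\"older estimate that the paper gets for free from the cone. Two small points to tighten: the angular deviation controlled by the tangent-ball argument is of order $\sqrt{(R-r)/\rho_0}$, not $(R-r)/\rho_0$ (this is harmless, since only smallness is used and your final bound comes from elliptic theory, not from this angle); and in Step 2 the radius of the background sphere is already fixed to be $r$, so rather than ``choosing'' it you should justify that $\|H-\coth r\|_\infty\leq C\oscH$, e.g.\ by comparing $H$ with the curvatures of $\partial{\sf B}_r$ and $\partial{\sf B}_R$ at the points where $S$ touches them and using $|\coth r-\coth R|\leq C(R-r)$.
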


Hence, the lower bound on $\rho$ prevents any \emph{bubbling} phenomenon and corollary \ref{main2} quantifies the proximity of $S$ from a single bubble in a $C^1$ fashion. 

As far as we know, our results are the first quantitative studies for almost constant mean curvature hypersurfaces in the hyperbolic space. We mention that, in the Euclidean space, almost constant mean curvature hypersurfaces have been recently studied in \cite{CFSW,CFMN,CirMag,JEMS,KrMag,MagnPog}. In particular, theorem \ref{main} generalizes the results we obtained in \cite{JEMS} to the hyperbolic space. However, the generalization is not trivial. Indeed, even if a qualitative study of a problem via the method of moving planes in the hyperbolic space does not significantly differs from the Euclidean context, the quantitative study presents several technical differences which need to be tackled. 

Now we describe the proof of theorem \ref{main}. Here we work in the half-space model 
$$
\mathbb{H}^{n}=\{p=(p_1,\dots,p_n)\in\RR^n\,\,:\,\, p_n>0\}
$$ 
equipped with the usual metric
$$
g_p=\frac{1}{p_{n}^2}\,\sum_{k=1}^n\,dp_{k}\otimes dp_k\,.
$$

Our approach consists in a quantitative study of the method of the moving planes (for the analogue approach in the euclidean context see 
\cite{ABR,CFMN,CMS2,CMV,JEMS}).  Our first crucial result is to prove approximate symmetry in one direction. Indeed, we fix a direction $\omega$ and we perform the moving plane method along the direction $\omega$ until we get a \emph{critical} hyperplane $\pi_\omega$ (see subsection \ref{movingplannes} for a description of the method in the hyperbolic context). Possibly after applying an isometry we may assume $\pi_{\omega}$ to be the vertical hyperplane $\pi=\{p_1=0\}$. Hence $\pi$
intersects $S$ and the reflection of the right-hand cap of $S$ about $\pi$ is contained in $\Omega$ and is tangent to $S$. More precisely, let $S_+=S\cap \{p_1\geq 0\}$ and $S_-=S\cap \{p_1\leq 0\}$; then the reflection of $S_+$ about $\pi$ is contained in $\Omega$ and it is tangent to $S_-$ at a point $p_0$ (internally or at the boundary). If $A$ is a set, we denote by $A^{\pi}$ its reflection about $\pi$, and we will use the following notation:
$$
\mbox{$\hat \Sigma$ is the connected component of $S_-$ containing $p_0$}
$$
and 
$$
\mbox{$\Sigma$ is the connected component of $S_+^\pi$ containing $p_0$.}
$$
Furthermore, we denote by $N$ the inward normal vector field on $\Sigma$. The inward normal vector field on $\hat\Sigma$ is still denoted by $N$, since no confusion arises. We prove the following theorem on the approximate symmetry in one direction. 

\begin{theorem} \label{thm approx symmetry 1 direction}
There exists $\ep>0$ such that if
$$
{\rm osc}( H) \leq \ep,
$$
then for any $p \in\Sigma$ there exists $\hat p\in \hat\Sigma$ such that
\begin{equation*}
d(p,\hat p) +  |N_p-\tau_{\hat p}^p (N_{\hat p})|_p \leq C\, \oscH  . 
\end{equation*}
Here, the constants $\ep$ and $C$ depend only on $n$, $\rho$ and the area of $S$. In particular $\ep$ and $C$ do not depend on the direction $\omega$. 

Moreover, $\Omega$ is contained in a neighborhood of radius $C\oscH$ of $\Sigma \cup \Sigma^\pi$, i.e.
$$
d(p,\Sigma \cup \Sigma^\pi) \leq C\oscH \,,
$$
for every $p \in \Omega$.
\end{theorem}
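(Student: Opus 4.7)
The plan is to stabilize the classical Alexandrov comparison between $\Sigma$ and $\hat\Sigma$, which coincide when $H$ is constant, by realizing $\hat\Sigma$ as a normal graph over $\Sigma$ and analyzing the linearization of the mean curvature operator. Since $S_+^\pi\subset\overline{\Omega}$ while $\hat\Sigma\subset S=\partial\Omega$, and the two are tangent at $p_0$ from the same side, the uniform two-sided touching ball of radius $\rho$ gives $C^{1,\alpha}$ regularity with constants depending only on $n$ and $\rho$, so one can write $\hat\Sigma=\{\exp_q(w(q)N_q):q\in\Sigma\}$ for a scalar function $w$ of a definite sign on $\Sigma$. The goal is to prove $\|w\|_{C^{1,\alpha}(\Sigma)}\le C\,\oscH$; both assertions of the theorem follow from this by standard manipulations.

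First I would derive the PDE satisfied by $w$. Reflection across $\pi$ is an isometry of $\HH^n$, so $\Sigma$ has the same mean curvature as $S_+$ at the reflected point, and therefore the mean curvatures of $\Sigma$ and $\hat\Sigma$ at corresponding graph points differ by at most $\oscH$. The standard expansion of the mean curvature along a normal graph yields a quasilinear elliptic equation of the form $\mathcal{L}w+Q(w,\nabla w,\nabla^2 w)=H(\hat\Sigma)-H(\Sigma)$, where $\mathcal{L}$ is a Jacobi-type linear elliptic operator associated to $\Sigma\subset\HH^n$ and $Q$ is quadratic in its arguments. The coefficients of $\mathcal{L}$, its ellipticity constants, and the $C^{1,\alpha}$ bound on $w$ are all controlled in terms of $n$, $\rho$, and the area of $S$.

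Next I would propagate smallness from the contact point via a quantitative maximum principle. At $p_0$ one has $w(p_0)=0$ and, in the interior case, also $\nabla w(p_0)=0$; since $w$ has a definite sign, the Harnack inequality for $\mathcal{L}$ gives $\sup_{B_r(p_0)\cap\Sigma}w\le C\,\oscH$ on balls of a fixed radius $r$ depending on $\rho$ and $n$. The diameter of $\Sigma$ is a priori bounded in terms of $n$, $\rho$ and the area of $S$ (thanks to the touching ball condition), so $\Sigma$ is covered by a uniformly bounded number of such Harnack balls; iterating along a chain connecting any point of $\Sigma$ to $p_0$ yields $\|w\|_{C^0(\Sigma)}\le C\,\oscH$, and Schauder estimates applied to the linearized equation upgrade this to the $C^{1,\alpha}$ bound, which translates into the pointwise control of $d(p,\hat p)$ and of the angle between $N_p$ and $\tau_{\hat p}^p(N_{\hat p})$.

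The main obstacle is the boundary tangency case, in which $p_0$ lies on $\pi$ and the contact is of Serrin corner type rather than a standard interior contact, so the usual Hopf lemma alone does not suffice to start the Harnack propagation. I would address this by the reflection trick: $\Sigma\cup\Sigma^\pi$ and $\hat\Sigma\cup\hat\Sigma^\pi$ are $C^{1,\alpha}$ surfaces symmetric across $\pi$, and $w$ extends to a signed solution of the same linear equation on the doubled surface, to which Serrin's corner lemma applies and restores the propagation from $p_0$. Finally, the containment statement is obtained by combining $d_H(\Sigma,\hat\Sigma)\le C\,\oscH$ with the fact that, by construction of the critical plane, every point of $\Omega$ lies in the region bounded by $\hat\Sigma$ and $\pi$ on the $\{p_1\le 0\}$ side, or in its reflection on the other side, so a tubular neighborhood of $\Sigma\cup\Sigma^\pi$ of radius $C\,\oscH$ covers $\Omega$.
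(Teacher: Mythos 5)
Your skeleton (graph function, linearized mean curvature equation, Harnack chain from the contact point, special treatment of the tangency on $\pi$) is the right one and is essentially the paper's strategy, but the set-up contains a circularity that is the actual crux of the theorem. You assert that the touching ball condition alone lets you write $\hat\Sigma=\{\exp_q(w(q)N_q):q\in\Sigma\}$ globally, with $w$ of definite sign and a priori $C^{1,\alpha}$ control. The touching ball condition gives local regularity of each surface separately; it does not imply that one connected component is a single-valued normal graph over the other. A priori $\Sigma$ and $\hat\Sigma$ are only known to touch at $p_0$ and could drift apart (or the normal graph could become multivalued) at unit scale, so $w$ is not even defined before closeness is established. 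The correct order of operations --- and what the proof actually has to do --- is to set up the graphical representation \emph{locally} at each step of the chain: once closeness of positions and normals is known at the current center, both surfaces are graphs over the common tangent plane on a ball of radius depending only on $\rho$ (this requires a quantitative ``change of graphing plane'' lemma), Harnack plus interior Schauder propagate the $C^1$ smallness to the next center, and only at the end does one obtain the global statement. One also needs to know that the region swept by the chain is connected, i.e.\ that $\Sigma_\delta=\{d_\Sigma(\cdot,\partial\Sigma)\ge\delta\}$ is connected for small $\delta$; this is not automatic and requires showing that $\Sigma$ meets $\pi$ almost orthogonally, which in turn uses the approximate symmetry already being propagated.

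Two further points would make the argument fail as written. First, your Harnack chain only reaches points of $\Sigma$ at distance $\ge\delta$ from $\partial\Sigma$: the interior Harnack constant blows up as the chain approaches the boundary, so for $p$ within $\delta$ of $\partial\Sigma$ you need a boundary Harnack/Carleson-type estimate (in domains with only a uniform exterior/interior ball condition) together with boundary Schauder estimates; this case is absent from your proposal. Second, for the corner contact ($p_0\in\pi$), invoking the classical Serrin corner lemma on the doubled surface is not enough: that lemma is qualitative (a sign on a second-order derivative at one point) and does not produce a bound of the form $\sup w\le C\,\oscH$ needed to \emph{start} the quantitative propagation; moreover the doubled surface is only $C^{1,\alpha}$ across $\pi$, so the reflected equation has merely H\"older (or worse) coefficients there. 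What is actually needed is a quantitative boundary growth estimate (again of Carleson/boundary-Harnack type) near the hyperplane, applied after controlling the curvature of the projected boundary curve $\Sigma\cap\pi$, which is an additional geometric step specific to the hyperbolic setting. The final containment statement and the overall architecture are fine, but without these three ingredients --- local (not global) graphicality established along the chain, the boundary Harnack estimate near $\partial\Sigma$, and a quantitative replacement for the corner lemma --- the proof does not close.
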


In this last statement $\tau_p^q\colon \RR^n\to \RR^n$ denotes the parallel transport along the unique geodesic path in $\mathbb H^n$ connecting $p$ to $q$. We prove theorem \ref{thm approx symmetry 1 direction} by using quantitative tools for PDEs (like Harnack's inequality and quantitative versions of Carleson estimates and Hopf Lemma), as well as quantitative results for the parallel transport and graphs in the hyperbolic space.

In order to prove theorem \ref{main}, we first define an approximate center of symmetry $\mathcal{O}$ by applying the moving planes procedure in $n$ orthogonal directions. The argument here is not trivial, since $n$ ``orthogonal hyperplanes'' do not necessarily intersect, and theorem \ref{thm approx symmetry 1 direction} come into play.
Then, theorem \ref{thm approx symmetry 1 direction} is also used to prove that every critical hyperplane in the moving planes procedure is close to $\mathcal{O}$ and we finally prove estimates \eqref{stability radii} by exploiting theorem \ref{thm approx symmetry 1 direction} again.

\bigskip
\noindent\textit{Acknowledgments.}
The authors wish to thank Alessio Figalli, Louis Funar, Carlo Mantegazza, Barbara Nelli, Carlo Petronio, Stefano Pigola, Harold Rosenberg, Simon Salamon and Antonio J. Di Scala,  and  for their remarks and useful discussions. 
The first author has been supported by the \lq\lq Gruppo Nazionale per l'Analisi Matematica, la Probabilit\`a e le loro
Applicazioni\rq\rq (GNAMPA) of the Istituto Nazionale di Alta Matematica (INdAM) and the project FIR 2013 ``Geometrical and Qualitative aspects of PDE''.
The second author was supported by the project FIRB ``Geometria differenziale e teoria geometrica delle funzioni'' and by GNSAGA of INdAM.

\section{Preliminaries}\label{preliminar}
We recall some basic facts about the geometry of hypersurfaces in Riemannian manifolds. Let $(M,g)$ be an $n$-dimensional Riemannian manifold with Levi-Civita connection $\nabla$ and $i\colon S\hookrightarrow  M$ be an embedded orientable hypersurface of class $C^2$. Fix a unitary normal vector field $N$ on $S$. We recall that the {\em shape operator} of $S$ at a point $p\in S$ is defined  as
$$
W_p(v)=-\left(\nabla_{v}\tilde N_p\right)^{\perp}\in T_pS
$$
for  $v\in T_pS$, where $\tilde N$ is an arbitrary extension of $N$ in a neighborhood of $p$ and the upperscript \lq\lq $\perp$\rq\rq\, denotes the orthogonal projection onto $T_pS$.  $W_p$ is always symmetric with respect to $g$ and the {\em principal curvatures} $\{\kappa_1(p),\dots,\kappa_{n-1}(p)\}$ of $S$ at $p$ are by definition eigenvalues of $W_p$. We recall that the lowest and the maximal principal curvature at $p$ can be respectively obtained as the minimum and maximum of the map $\kappa_p\colon T_pS\backslash 0\to \RR$ defined as
$$
\kappa_p(v):=-\frac{1}{|v|^2}g_p(W_p(v),v)=-\frac{1}{|v|^2}g_p(\nabla_{v}\tilde N_p,v)\,.
$$
Alternatively, $\kappa_{p}(v)$ can be defined by fixing a smooth curve $\alpha\colon (-\epsilon,\epsilon) \to S$ satisfying
$$
\alpha(0)=p\,,\quad \dot \alpha(0)=v\,,
$$
since in terms of $\alpha$ we can write
$$
\kappa_p(v)=\frac{1}{|v|^2}g_p(N_p,D_t\dot\alpha(0))\,,
$$
where $D_t$ denotes the covariant derivative on $(M,g)$.  The {\em main curvature} of $S$ at $p$ is then defined as 
$$
H(p)=\frac{\kappa_1(p)+\dots +\kappa_{n-1}(p)}{n-1}\,. 
$$

From now on we focus on the hyperbolic space.  
Given a model of the hyperbolic space, we denote the hyperbolic metric by $g$, the hyperbolic distance by $d$, the hyperbolic norm at a point $p$ by $|\cdot |_p$, and the ball of center $p$ and radius $r$ by ${\sf B}_r(p)$. 
The Euclidean inner product in $\RR^n$ will be denoted by \lq\lq $\cdot$\rq\rq\, and the Euclidean norm by $|\cdot|$. The hyperbolic measure of a set $A$ will be denoted by $|A|_g$.

We mainly work in the half-space model $\mathbb H^n$. In this model hyperbolic balls and Euclidean balls coincide, but hyperbolic and  Euclidean centers and the hyperbolic and Euclidean radii differ. Namely, the Euclidean radius $r_E$ of ${\sf B}_r(p)$ is 
$$
r_E = p_n \sinh r \,,
$$
where $p=(p_1,\dots,p_n)$ are the coordinates of $p$ in $\mathbb{R}^n$. 

The Euclidean hyperplane $\{p_{n}=0\}\subset \RR^n$ will be denoted by $\pi_{\infty}$ and the origin of $\pi_{\infty}$ by $O$. Moreover, $\{e_1,\dots,e_n\}$ is the canonical basis of $\RR^n$. 

Given a point $p\in \H^{n}$, we denote by $\bar p$ its projection onto $\pi_{\infty}$ and by $B_{r}(x)$ the (Euclidean) ball of $\pi_{\infty}$ centered at $x\in \pi_{\infty}$ and having radius $r$. We omit to write the center of balls of $\pi_{\infty}$ when they are centered at the origin, i.e. $B_r(O)=B_r$.

\medskip 
Now we consider a closed $C^2$ hypersurface $S$ embedded in $\mathbb H^{n}$. 
Given a point $p$ in $S$ we denote by $T_pS$ its tangent space at $p$ and by $N_p$ the inward hyperbolic normal vector at $p$. Note that, accordingly to our notation,
$$
\nu_p:=\frac{1}{p_n}N_p
$$
is the Euclidean inward normal vector. 
We further denote by $d_S$ the distance on $S$ induced by the hyperbolic metric. Given a point $z_0 \in S$, we denote by $\mathcal B_r(z_0)$ the set of points on $S$ with intrinsic distance from $z_0$ less than $r$, i.e.
\begin{equation*}
\mathcal B_r(z_0)= \{z \in S:\ d_{S}(z,z_0) < r \}\,.
\end{equation*}

We are going to prove several quantitative estimates by locally writing the hypersurface $S$ as an Euclidean graph. Since this procedure is not invariant by isometries, we need to specify a ``preferred'' configuration in order to obtain uniform estimates. More precisely, such configuration is when $p=e_{n} \in S$ and $T_pS=\pi_{\infty}$; then, close to $p$, $S$ is locally the Euclidean graph  of a $C^2$-function $v\colon B_r\to \RR$ and we denote by $\mathcal{U}_r(p)$  the graph of $v$.  If $p$ in $S$ is an arbitrary point, then there exists an orientation preserving isometry $\varphi$ of $\mathbb H^n$ such that $\varphi(p)=e_n$  and  $T_{\varphi(p)}\varphi(S)=\pi_{\infty}$. 
Hence, around $\varphi(p)$, $\varphi(S)$ is the graph of a $C^2$-map $v\colon B_r\to \RR$ and we define $\mathcal U_r(p)$ as the preimage via $\varphi$ of the graph of $v$.  
The definition of $\mathcal U_r(p)$ is well-posed.

\begin{lemma}\label{Ur si well-posed}
The definition of $\mathcal U_r(p)$ does not depend on the choice of $\varphi$. 
\end{lemma}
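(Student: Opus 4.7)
The plan is to exploit that two isometries $\varphi_1,\varphi_2$ satisfying the hypotheses differ by an element $\psi:=\varphi_2\circ \varphi_1^{-1}$ of the stabilizer of $e_n$, and to show that $\psi$ must be a Euclidean rotation around the vertical axis through $e_n$, which then preserves the cylinder $B_r\times\RR$ used in the definition of $\Ucal_r(p)$.

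First I would observe that, near $p$, the definition amounts to $\Ucal_r(p)=\varphi_i^{-1}(B_r\times\RR)\cap S$: indeed, the graph of $v_i$ is precisely the connected component of $\varphi_i(S)\cap (B_r\times\RR)$ containing $e_n$. Consequently the claim reduces to the invariance $\psi(B_r\times\RR)=B_r\times\RR$, because then
\[
\varphi_2^{-1}(B_r\times\RR)\cap S=\varphi_1^{-1}\bigl(\psi^{-1}(B_r\times\RR)\bigr)\cap S=\varphi_1^{-1}(B_r\times\RR)\cap S.
\]

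To prove this invariance I would classify $\psi$. Since an orientation preserving isometry of a connected Riemannian manifold is uniquely determined by its one-jet at a point, $\psi$ is specified by $d\psi_{e_n}\in SO(n)$. The hypothesis that both $\varphi_i(S)$ have tangent space $\pi_\infty$ at $e_n$ forces $d\psi_{e_n}$ to preserve the subspace $T_{e_n}\pi_\infty=\{x_n=0\}$, while the matching of the inward normal directions at $e_n$ (implicit in the preferred configuration) forces $d\psi_{e_n}(e_n)=e_n$. Hence $d\psi_{e_n}\in SO(n-1)$, and the corresponding isometry of $\HH^n$ is the Euclidean rotation around the vertical geodesic $\{(0,\dots,0,t):t>0\}$ through $e_n$ --- which is indeed an isometry of $g=|dp|^2/p_n^2$ since both $|dp|^2$ and $p_n$ are preserved. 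Such rotations manifestly leave $B_r\times\RR$ invariant.

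The main subtlety is to rule out the alternative $d\psi_{e_n}(e_n)=-e_n$. Such transformations exist and do \emph{not} preserve the cylinder: for instance $\sigma_1\circ \iota$, with $\iota(p)=p/|p|^2$ the inversion at the origin and $\sigma_1$ the reflection across a vertical coordinate hyperplane, is an orientation preserving isometry of $\HH^n$ fixing $e_n$ with differential $-\mathrm{Id}$ at $e_n$. This case must be excluded via the implicit convention that both $\varphi_i$ send the inward normal $N_p$ of $S$ to a fixed direction in $T_{e_n}\HH^n\setminus T_{e_n}\pi_\infty$, which is part of what is meant by bringing $\varphi_i(S)$ into ``preferred configuration''.
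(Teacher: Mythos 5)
Your proof is correct and follows essentially the same route as the paper: both reduce the claim to showing that the transition isometry $f=\psi\circ\varphi^{-1}$, which fixes $e_n$ and preserves $\pi_\infty$, is a rotation about the $e_n$-axis and therefore preserves graphs over balls of $\pi_\infty$ centred at the origin. Your additional step --- observing that the stabilizer of $e_n$ also contains orientation-preserving isometries whose differential reverses the normal direction (e.g. $\sigma_1\circ\iota$) and excluding them via the convention that the inward normal is sent to $e_n$ --- addresses a point the paper's proof passes over silently when it asserts directly that $f$ is a rotation, and is a worthwhile refinement rather than a deviation.
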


\begin{proof}
Let $\mathcal U_r(p)$ be defined via an orientation-preserving isometry $\varphi\colon \mathbb H^n\to \mathbb H^n$ such that 
\begin{equation}\label{PHI}
\varphi(p)=e_n\,,\quad \varphi_{*|p}(T_pS)=\pi_{\infty}
\end{equation}
and let $\psi \colon \mathbb H^n\to \mathbb H^n$ be another  orientation-preserving isometry satisfying \eqref{PHI}.  Then $f=\psi \circ\varphi^{-1}$ is an  orientation-preserving isometry of $\mathbb H^n$ satisfying 
$$
f(e_n)=e_n\,,\quad f_{|*}(\pi_{\infty})=\pi_{\infty} 
$$
and so it is a rotation about the $e_n$-axis.  Therefore $\psi(\mathcal U_r(p))$ is the graph of a $C^2$-map defined on a ball in $\pi_{\infty}$ about the origin and the claim follows.  
\end{proof}

We denote by $H$ the hyperbolic mean curvature of $S$. $H$ is related to the Euclidean mean curvature $H_E$ by  
$$
H(p)=(\nu_{p}+p H_E(p)) \cdot e_n\,.
$$
For instance, if $S$ is the hyperbolic ball ${\sf B}_r(p)$ oriented by the inward normal, we have
$$
H\equiv \frac{1}{\tanh r} \,,\quad H_E(p)=\frac{1}{p_{n} \sinh r}\,.
$$
If $S$ is locally the graph of a smooth function $v\colon B_r\to \RR$, where $B_r$ is a ball about the origin in $\pi_\infty$,  and $p=(x,v(x)) \in S$, then $H$ at $p$ takes the following expression
\begin{equation}\label{Hu}
H(p)=\frac{v(x)}{n-1}\,{\rm div} \left(\frac{\nabla v(x)}{\sqrt{1+|\nabla v(x)|^2}}\right)+\frac{1}{\sqrt{1+|\nabla v(x)|^2}}\,. 
\end{equation}
In the last expression ${\rm div}$ and $\nabla$ are the Euclidean divergence and gradient in $\pi_{\infty}$, respectivily. Moreover,  we have 
$$
\nu_p=\frac{(-\nabla v(x),1)}{\sqrt{|\nabla v(x)|^2 + 1 }} \,.
$$

Since $S$ is compact and embedded, then  it is the boundary of a bounded domain  $\Omega$ in $\mathbb H^n$.
Given $p$ in $S$,  we say that {\em $S$ satisfies a touching ball condition of radius $\rho$ at $p$} if there exist two hyperbolic balls of radius $\rho$ tangent to $S$ at $p$, one contained in $\Omega$ and one contained in the complementary of $\Omega$. Since $S$ is compact then {\em $S$ satisfies a uniform touching ball condition of radius $\rho$} for some $\rho$, i.e. it satisfies a touching ball condition of radius $\rho$ at any point (see \cite{GT}).

\subsection{Alexandrov's theorem and the method of moving planes in the hyperbolic space}\label{movingplannes}
In this paper by  {\em hyperplane} in the hyperbolic space we mean a totally geodesic hypersurface.  In  the half-space model $\mathbb{H}^{n}$, hyperplanes are either Euclidean half-spheres centered at a point in $\pi_\infty$ or vertical planes orthogonal to  $\pi_{\infty}$, while in the ball model the hyperbolic hyperplanes are Euclidean spherical caps or planes orthogonal to the boundary of $\mathbb{B}^n$. Here we that recall the {\em ball model} consists of
$\mathbb{B}^n=\{p\in \RR^{n}\ |\ |p|=1\}$ equipped with the Riemannian metric
$$
g_p=\frac{4}{(1-|p|^2)^2}\sum_{k=1}^n dp_k\otimes dp_k\,.  
$$ 

If $\Omega$ is a bounded open set in the hyperbolic space, its {\em center of mass} is defined as the minimum point   
$\mathcal O$ of the map     
$$
P(p)=\frac{1}{2\,|\Omega|_g} \int_\Omega d(p,a)^2\,da \,.
$$
In view of \cite{Ka} $P$ is a convex function and the center of mass in unique. Furthermore the gradient of $P$ takes the following expression 
\begin{equation}\label{gradP}
\nabla P(p)=-\frac{1}{|\Omega|_g}\int_{\Omega} \exp^{-1}_p(a) \,da\,. 
\end{equation}
\begin{lemma} \label{lemma_dillovoi}
Let $\Omega$ be a bounded open set in the hyperbolic space.  Then every hyperplane of symmetry of $\Omega$ contains the center of mass $\mathcal O$ of $\Omega$. 
\end{lemma}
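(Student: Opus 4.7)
The plan is to exploit the variational characterization of the center of mass as the unique minimizer of the convex function $P$, together with the fact that any hyperplane of symmetry induces an isometry of $\mathbb H^n$ which preserves $\Omega$ and the hyperbolic distance.

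First I would recall that by the reference \cite{Ka}, $P$ is strictly convex on $\mathbb H^n$, hence admits a unique minimum point, namely the center of mass $\mathcal O$. Next, let $\pi$ be a hyperplane of symmetry of $\Omega$ and let $R_\pi \colon \mathbb H^n \to \mathbb H^n$ denote the hyperbolic reflection about $\pi$. Since $R_\pi$ is an isometry with $R_\pi(\Omega)=\Omega$ and $|R_\pi(\Omega)|_g=|\Omega|_g$, performing the change of variable $a \mapsto R_\pi(a)$ in the integral defining $P$ yields
\begin{equation*}
P(R_\pi(p)) = \frac{1}{2|\Omega|_g}\int_\Omega d(R_\pi(p),a)^2\,da = \frac{1}{2|\Omega|_g}\int_\Omega d(R_\pi(p),R_\pi(a))^2\,da = P(p),
\end{equation*}
for every $p \in \mathbb H^n$. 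Thus $P$ is invariant under $R_\pi$.

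In particular $R_\pi(\mathcal O)$ is also a minimum point of $P$, and by uniqueness of the minimizer we conclude $R_\pi(\mathcal O) = \mathcal O$. Since the fixed point set of the reflection $R_\pi$ is exactly the hyperplane $\pi$, this gives $\mathcal O \in \pi$, as required.

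I do not foresee any substantive obstacle in this argument: the only nontrivial ingredient is the strict convexity of $P$ on $\mathbb H^n$, which is already quoted from \cite{Ka} in the text preceding the statement, and the rest is an elementary symmetrization argument. One could alternatively argue via the gradient formula \eqref{gradP}, observing that $R_\pi$-invariance of $P$ forces $\nabla P$ to be tangential to $\pi$ at any point of $\pi$, but the minimizer approach is cleaner and avoids having to solve for zeros of $\nabla P$ directly.
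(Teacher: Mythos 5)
Your proof is correct, and it takes a genuinely different route from the paper's. The paper argues by contradiction in the ball model: placing $\mathcal O$ at the origin, it supposes a hyperplane of symmetry $\pi$ misses $\mathcal O$, splits $\Omega$ into three pieces $\Omega_1,\Omega_2,\Omega_3$ by $\pi$ and the parallel vertical hyperplane through the origin, and then plays off two volume identities --- one coming from the symmetry of $\Omega$ about $\pi$, the other from the vanishing of $\nabla P(\mathcal O)$ via the explicit formula \eqref{gradP} and $\exp_{\mathcal O}^{-1}(p)=2(\tanh^{-1}|p|)\,p/|p|$ --- to force $|\Omega_2|_g=0$, a contradiction. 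Your argument instead uses only the invariance $P\circ R_\pi=P$ (valid because the reflection about a totally geodesic hypersurface is an isometry preserving $\Omega$ and the volume element) together with the uniqueness of the minimizer of $P$, which the paper has already asserted from \cite{Ka}; since the fixed-point set of $R_\pi$ is exactly $\pi$, the conclusion follows. Your approach is model-free, shorter, and more general --- it shows that $\mathcal O$ is fixed by every isometry preserving $\Omega$, not just reflections --- whereas the paper's computation is tied to the ball model and to the explicit form of the gradient. The only ingredient you must be careful to justify is strict convexity of $P$ (or, equivalently, uniqueness of its minimizer); since the paper quotes this from \cite{Ka} immediately before the lemma, your argument is complete as written.
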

\begin{proof} Even if the result is well-known we give a proof for reader's convenience. 
We prove the statement in the ball model $\mathbb B^n$. Without loss of generality, we may assume that the center of mass $\mathcal O$ of $\Omega$ is the origin of $\mathbb B^n$. Assume by contradiction that there exists a hyperplane $\pi$ of symmetry for $\Omega$ not containing $\mathcal O$. Hence $\pi$ is a spherical cap which (up to a rotation) we may assume to be orthogonal to the line $(p_1,0,\dots,0)$ and lying in the half-space $p_1>0$.  Let $\pi_1=\{p_1=0\}$ be the vertical hyperplane orthogonal to  $e_1$. 
Since $\pi_1$ and $\pi$ are disjoint, they subdivide $\Omega$ in three subsets $\Omega_1$, $\Omega_2$, $\Omega_3$, with $|\Omega_2|_g>0$ (see figure \ref{figura2}). 
\begin{figure}[h]
$$
\begin{tikzpicture}
\shade (0,0) ellipse (60pt and 40pt);
\filldraw [black] (0,0) circle (1pt);
\node at (-2.3,2.5) {$\mathbb B^2$};
\node at (-1.9,1.2) {$\Omega$};
\node at (0.3,-0.3) {$\mathcal O$};
\node at (-0.8,0.5) {$\Omega_1$};
\node at (0.5,1) {$\Omega_2$};
\node at (1.6,0.5) {$\Omega_3$};
\node at (-0.4,2.5) {$\pi_1$};
\node at (1.8,2.5) {$\pi$};
\draw(0,0) ellipse (60pt and 40pt);
\draw(0,0) ellipse (80pt and 80pt);
\draw (0,2.8) -- (0,-2.8);
\draw[dashed] (2.8,0) -- (-2.8,0);
\draw [domain=216.1:144] plot ({4.8+4*cos(\x)}, {4*sin(\x)});
\end{tikzpicture}
$$
\caption{}
\label{figura2}
\end{figure}
Since $\Omega$ is symmetric about $\pi$, we have that $|\Omega_1|_g+|\Omega_2|_g=|\Omega_3|_g$. Moreover since
$$
\exp_{\mathcal O}^{-1}(p)=2(\tanh^{-1} |p|)\,\frac{p}{|p|}, \mbox{ for every } p\in \mathbb B^n \,,
$$
formula \eqref{gradP} implies 
$$
\int_{\Omega\cap \{p_1>0\}} (\tanh^{-1} |p|)\,\frac{p_1}{|p|} \,dp=-\int_{\Omega\cap \{p_1<0\}} (\tanh^{-1} |p|)\,\frac{p_1}{|p|} \,dp 
$$
so that  $|\Omega_1|_g=|\Omega_2|_g+|\Omega_3|_g$, which gives a contradiction.
\end{proof}

\begin{proposition} \label{prop_dai}
Let $S= \partial \Omega$ be a $C^2$-regular, connected, closed hypersurface embedded in the $n$-dimensional hyperbolic space, where $\Omega$ is a bounded domain. Assume that for every direction $\omega\in \mathbb R^n$ there exists a hyperplane of symmetry of $S$ orthogonal to $\omega$ at the center of mass $\mathcal O$ of $\Omega$. Then $S$ is a hyperbolic sphere about $\mathcal O$. 
\end{proposition}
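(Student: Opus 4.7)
The plan is to reduce the statement to a purely Euclidean symmetry problem by working in the ball model $\mathbb{B}^n$ of the hyperbolic space, placing the center of mass at the origin. After an isometry we may assume $\mathcal{O}$ is the Euclidean origin of $\mathbb{B}^n$. The key observation is that a hyperbolic hyperplane passing through the origin cannot be a spherical cap orthogonal to $\partial\mathbb{B}^n$ (such a cap does not contain the origin), so it must be a Euclidean hyperplane through the origin intersected with $\mathbb{B}^n$. Moreover at the origin the hyperbolic metric $g_0 = 4 \sum dp_k \otimes dp_k$ is a positive multiple of the Euclidean metric, so hyperbolic orthogonality to a direction $\omega$ at $\mathcal{O}$ agrees with Euclidean orthogonality.

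Next I would argue that the hyperbolic reflection about a Euclidean hyperplane $\pi_\omega$ through the origin coincides with the Euclidean reflection. Indeed, the Euclidean reflection fixes $\pi_\omega$ pointwise, is an orientation-reversing Euclidean isometry, and preserves $|p|$, hence preserves the conformal factor $4/(1-|p|^2)^2$ of the ball-model metric; therefore it is a hyperbolic isometry that fixes $\pi_\omega$, and so it must be the hyperbolic reflection about $\pi_\omega$. Consequently, the hypothesis says that $S$ is invariant under the Euclidean reflection about every hyperplane through the origin perpendicular to any $\omega \in \mathbb{R}^n$, i.e. under every reflection through a hyperplane containing $0$.

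These reflections generate the full orthogonal group $O(n)$, so $S$ is $O(n)$-invariant as a subset of $\mathbb{B}^n \subset \mathbb{R}^n$. The $O(n)$-orbits on $\mathbb{R}^n \setminus \{0\}$ are Euclidean spheres centered at the origin, so $S$ is a union of such spheres. Since $S$ is a connected $C^2$ hypersurface and no such sphere contains $0$ (this would contradict $S = \partial\Omega$ being an embedded hypersurface), $S$ must consist of exactly one Euclidean sphere centered at the origin. In the ball model such a Euclidean sphere is also a hyperbolic sphere centered at $\mathcal{O}$, which proves the claim.

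The only delicate point is the identification of hyperbolic and Euclidean reflections about hyperplanes through the center; once this is settled, everything else is a standard symmetry argument. Nothing in this reasoning depends on the quantitative machinery developed later in the paper.
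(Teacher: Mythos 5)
Your proposal is correct and follows essentially the same route as the paper: pass to the ball model with $\mathcal{O}$ at the origin, observe that the hypothesis makes $S$ symmetric about every Euclidean hyperplane through the origin, and conclude that $S$ is a Euclidean (hence hyperbolic) sphere about $\mathcal{O}$. The only difference is that you prove the final Euclidean step directly via the $O(n)$-orbit argument (and spell out why hyperbolic and Euclidean reflections coincide for hyperplanes through the center), whereas the paper simply cites the corresponding lemma in Hopf's lecture notes.
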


\begin{proof}
We prove the statement in the ball model $\mathbb B^n$ assuming that  $\mathcal O$ is the origin  of $\mathbb B^n$. In this case the assumptions in the statement imply that $S$ is symmetric about every Euclidean hyperplane passing through the origin. So $S$ is an Euclidean ball about $\mathcal O$ (see e.g. \cite[Lemma 2.2, Chapter VII]{Ho}) and the claim follows.     
\end{proof}

Now we give a description of the method of the moving planes in $\mathbb H^n$ declaring some notation we will use here and in sections \ref{section_6} and \ref{section_proof_main2}.  The method consists in moving hyperbolic hyperplanes along a geodesic orthogonal to a fixed direction.    
Let $\omega$ be a fixed direction and let $\gamma_{\omega}\colon (-\infty,\infty)\to \mathbb H^{n}$ be the maximal geodesic satisfying $\gamma(0)=e_{n}$, $\dot\gamma(0)=\omega$. For any $s \in \mathbb{R}$ we denote by $\pi_{\omega,s}$ the totally geodesic hyperplane passing through $\gamma_\omega(s)$ and orthogonal to $\dot{\gamma}_\omega(s)$.

The description of the method can be simplified by assuming  $\omega=e_n$ (by using an isometry it is always possible to describe the method only for this direction). In this case the hyperplane $\pi_{e_n,s}$ consists of a half-sphere  $\pi_{e_n,s}=\{p \in \mathbb{H}^n:\, |p|={\rm e}^s\}$. For $s$ large enough, $S \subset \{|p| < {\rm e}^s\}$. We decrease the value of $s$ until $\pi_{e_n,s}$ is tangent to $S$. Then, we continue to decrease $s$ until the reflection $S_{e_n,s}^{\pi}$ of $S_{e_n,s}:=S \cap \{|p|\geq{\rm e}^{s}\}$ about $\pi_{e_n,s}$ is contained in $\Omega$, and we denote by $\pi_{e_n}$ the hyperplane obtained at the limit configuration. 

More precisely, for a general direction $\omega$ we define 
$$
m_{\omega}=\inf\{s \in \RR\,\,:\,\,S_{\omega,s}^\pi\subset \Omega\}
$$
and refer to $\pi_{\omega}:=\pi_{\omega,m_{\omega}}$ and $S_\omega:=S_{\omega,m_\omega}^{\pi}$ as to the {\em critical hyperplane} and \emph{maximal cap} of $S$ along the direction $\omega$. Analogously, $\Omega_\omega$ is addressed as the maximal cap of $\Omega$ in the direction $\omega$. Note that by construction the reflection $S_{\omega}^\pi$ of $S_\omega$ is tangent to $S$ at a point $p_0$ and there are two possible configurations given by $p_0 \not \in \pi_\omega$ and $p_0 \in \pi_\omega$.

\begin{proof}[Proof of Alexandrov's theorem]
The proof is obtained by using the method of the moving planes described above and showing that for every direction $\omega$ we have that $S$ is symmetric about $\pi_\omega$. Once a direction $\omega$ is fixed, we may assume by using a suitable isometry that $\pi_{\omega}$ is the vertical hyperplane $\pi_{\omega}=\{x_1=0\}$ and $\omega=e_1$. We parametrize $S$ and $S_\omega^\pi$ in a neighborhood of $p_0$ in $T_{p_0}S$ (which clearly coincides with $T_{p_0}S_\omega^\pi$) as graphs of two functions $v$ and $u$, respectively. If $p_0\notin \pi_{\omega}$ the functions $v$ and $u$ are defined on a ball $B_r$ (case (i)), otherwise they are defined in a half-ball $B_r \cap \{x_1 \leq 0\}$ and $v=u$ on $B_r \cap \{p_1 = 0\}$  (case (ii)). In both cases the two functions $v$ and $u$ satisfy \eqref{Hu} and the difference $w=u - v$ is nonnegative and satisfies an elliptic equation $Lw=0$,
with $w(0)=0$ in case (i) and $w=0$ on $B_r \cap \{p_1 = 0\}$ in case (ii). The strong maximum principle in case (i) and Hopf's lemma in case (ii) yield $w\equiv 0$. This implies that there exist two connected components of $S_-$ and $S_{\omega}^\pi$ such that the set of tangency points between them is both closed and open. Since $S$ is connected we also have that $S_{\omega}^\pi=S_-$, i.e. $S$ is symmetric about $\pi_\omega$. The conclusion follows from lemma \ref{lemma_dillovoi} and proposition \ref{prop_dai}.
\end{proof}

\begin{remark}{\em
We mention that Alexandrov's theorem still holds by assuming that $\Omega$ is connected, and the proof given above can be easily modified accordingly. 
}
\end{remark}

\begin{remark}{\em 
In the defintion of the method of the moving planes one can replace $e_n$ with an arbitrary point $p\in \mathbb H^n$ by replacing conditions $\gamma_\omega(0)=e_n$ and $\dot \gamma_{\omega}(0)=\omega$  with $\gamma_\omega(0)=p$ and $\dot \gamma_{\omega}(0)=\omega$, respectively.  }
\end{remark}

\begin{remark}{\em 
The method of the moving planes described in this section differs from the method of moving planes described in \cite{KP}, where the hyperplanes move along a horocycle instead of a geodesic. We remark that if one is interested in a qualitative result (such as the Alexandrov's theorem) then the two methods are equivalent; instead, the method we adopt here is more suitable for a quantitative analysis of the problem.}
\end{remark}

\section{Local quantitative estimates}

In this section we establish some local quantitative results that we need to prove theorem \ref{main}. We will need to switch Euclidean and hyperbolic distances and we need a preliminary lemma which quantifies their relation close to $e_n$.
We recall that the hyperbolic distance $d$ in the half-space model of $\mathbb H^n$ is given in terms of the  Euclidean distance by the following formula
\begin{equation} \label{dist_hyperbolic}
d(p,q) = \arccosh \left(1+\frac{|p-q|^2}{2p_{n} q_{n}} \right) \,.
\end{equation}
In particular 
$$
d(e_n,te_n)=|\log t|\,, \quad \mbox{ for any }t\in (0,\infty)\,. 
$$
%

\begin{lemma}\label{stima per d}
Let $R>0$ be fixed and let $q$ in ${\sf B}_R(e_n)$. Then there exist $c=c(R)>0$ and $C=C(R)>0$ such that
\begin{equation} \label{dist_equiv}
c |q-e_n| \leq d (q,e_n) \leq C |q-e_n| \,.
\end{equation}
\end{lemma}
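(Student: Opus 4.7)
The plan is to exploit the explicit formula \eqref{dist_hyperbolic} and reduce the estimate to elementary asymptotics of $\arccosh$ near $1$.

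First I would observe that, in the half-space model, ${\sf B}_R(e_n)$ coincides with the Euclidean ball of center $(0,\dots,0,\cosh R)$ and Euclidean radius $\sinh R$. Consequently the last coordinate of any $q \in {\sf B}_R(e_n)$ is constrained to
$$
e^{-R} = \cosh R - \sinh R \leq q_n \leq \cosh R + \sinh R = e^R,
$$
and the Euclidean distance $|q - e_n|$ is bounded by a constant depending only on $R$. This two-sided bound on $q_n$ is the key point: it makes the quantity $\sigma := |q-e_n|^2/(2 q_n)$ comparable to $|q - e_n|^2$ up to constants depending only on $R$, and in particular $\sigma$ itself stays in a bounded interval whose length depends only on $R$.

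Next, using the identity $\arccosh(1+\sigma) = 2\arcsinh(\sqrt{\sigma/2})$ (which follows from $\cosh s - 1 = 2\sinh^2(s/2)$), formula \eqref{dist_hyperbolic} rewrites as
$$
d(q,e_n) = 2\arcsinh\!\bigl(\sqrt{\sigma/2}\bigr).
$$
Since the ratio $\arcsinh(x)/x$ is continuous and strictly positive on any compact interval $[0,M]$ (tending to $1$ as $x\to 0$), and since $\sigma$ stays in such a bounded interval, I obtain positive constants $c_1, C_1$ depending only on $R$ such that $c_1\sqrt{\sigma} \leq d(q,e_n) \leq C_1\sqrt{\sigma}$. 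Substituting $\sigma=|q-e_n|^2/(2 q_n)$ and invoking once more the two-sided bound on $q_n$ then yields the claimed equivalence \eqref{dist_equiv}.

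There is no substantial obstacle in this argument: the conclusion is essentially a manifestation of the fact that the hyperbolic and Euclidean metrics are Lipschitz equivalent, with constants depending on $R$, on any compact subset of $\mathbb H^n$ staying at positive Euclidean distance from the ideal boundary $\pi_\infty$. Equivalently, one could argue by compactness applied to the ratio $d(q,e_n)/|q-e_n|$, which extends continuously to $e_n$ with positive value $1$ (since the hyperbolic metric equals the Euclidean one at $e_n$), is strictly positive on $\overline{{\sf B}_R(e_n)}$, and therefore admits positive lower and upper bounds there.
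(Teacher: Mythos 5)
Your argument is correct and follows essentially the same route as the paper: both reduce \eqref{dist_equiv}, via the explicit formula \eqref{dist_hyperbolic} and the two-sided bound $e^{-R}\leq q_n\leq e^{R}$, to the fact that $\arccosh(1+\sigma)$ is comparable to $\sqrt{\sigma}$ for $\sigma$ in a bounded interval depending only on $R$. The only (inessential) difference is that the paper obtains this comparison by bounding $\frac{d}{dt}\arccosh(t)$ and integrating, whereas you use the identity $\arccosh(1+\sigma)=2\arcsinh\bigl(\sqrt{\sigma/2}\bigr)$ together with continuity of $\arcsinh(x)/x$ on a compact interval.
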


\begin{proof}
Since $e^{-R} \leq q_n \leq e^R$, then
$$
1+\frac{e^{-R}}{2}|q-e_n|^2 \leq 1+ \frac{|q-e_n|^2}{2 q_{n}} \leq 1+\frac{e^{R}}{2}|q-e_n|^2 \,,
$$
and, since $|q-e_n| \leq e^R -1 $, then
$$
1+ \frac{|q-e_n|^2}{2q_{n}} \leq A \,,
$$
where $A=A(R)$. Let $\phi(t)=\arccosh(t)$, $t \in [1,+\infty)$. Since $1 \leq t \leq A$ then, keeping in mind that $
\phi'(t)=(t^2-1)^{-1/2}$, we have 
$$
\frac{1}{\sqrt{A+1}} \frac{1}{\sqrt{t-1}} \leq \phi'(t) \leq  \frac{1}{\sqrt{t-1}} \,,
$$
and hence
$$
\frac{1}{2\sqrt{A+1}} \sqrt{t-1} \leq \phi(t) \leq \frac{1}{2} \sqrt{t-1} \, \quad t \in [1,A] \,.
$$
By letting
$$
t=1 + \frac{|q-e_n|^2}{2 q_{n}} \,,
$$
and from
$$
\frac{e^{-R/2}}{\sqrt{2}} |q-e_n| \leq \sqrt{t-1} \leq \frac{e^{-R/2}}{\sqrt{2}} |q-e_n|
$$
we conclude.
\end{proof}

\subsection{Quantitative estimates for parallel transport}
In this subsection we prove quantitative estimates involving the parallel transport which will be useful in the proof of theorem \ref{thm approx symmetry 1 direction}.  

We recall that {\em the parallel transport} along a smooth curve  $\alpha\colon [t_0,t_1]\to \mathbb H^n$  is the linear map $\tau \colon \RR^n\to \RR^n$ given by 
$$
\tau(v)=X(t_1)
$$
where $X\colon [t_0,t_1]\to \mathbb H^n $ is the solution to the linear ODE 
$$
\begin{cases}
\dot X_k+\sum_{i,j=1}^nX_j\dot \alpha_i\Gamma_{ij}^k(\alpha)=0\,,\quad k=1,\dots, n,\\
X_k(t_0)=v_k,\quad k=1,\dots, n\,,
\end{cases}
$$
and $\Gamma_{ij}^k$ are the Christoffel symbols in $\mathbb{H}^n$. Here we recall that the $\Gamma_{ij}^k$'s are all vanishing if either the three indexes $i,j,k$ are distinct or one of them is different from $n$, while in the remaining cases they are given by
$$
\Gamma_{in}^i=-\frac{1}{x_n}\,\quad \Gamma_{ii}^n=\frac{1}{x_n}\,,\quad \Gamma_{ni}^i=-\frac{1}{x_n}\,,\quad \Gamma_{nn}^n=-\frac{1}{x_n}\,.
$$
We adopt the following notation: given $q$ and $p$ in $\mathbb H^n$, we denote by 
$$
\tau_{q}^p\colon \mathbb R^n\to \mathbb R^n
$$ 
 the parallel transport along the unique geodesic path connecting $q$ to $p$. Note that if $q$ and $p$ belong to the same vertical line (i.e. if $\bar q=\bar p$ in our notation), then 
$$
\tau_q^p(v)=\frac{p_n}{q_n}\,v\,. 
$$
About the case, $\bar q\neq \bar p$, we consider the following lemma where for simplicity we assume $p=e_n$. 

\begin{lemma}\label{computation}
Let $q\in\mathbb H^n$ be such that $q\in \langle e_{n-1},e_n\rangle$ and let  $v\in \RR^n$. Assume $q_{n-1}\neq 0$, then 

$$
\tau_q^{e_n}(v)=\frac{1}{q_n}(v_1,\dots, v_{n-2},\tilde v_{n-1},\tilde v_n)\,,
$$ 
where 
$$
\left(\begin{array}{c}\tilde v_{n-1}\\
\tilde v_{n}
\end{array}
\right)=\frac{1}{1+a^2}
\left(\begin{array}{cc} a(a-q_{n-1})+q_n& a-q_{n-1}-aq_n\\
cq_n-a+q_{n-1} &  a(a-q_{n-1})+q_n
\end{array}
\right)
\left(\begin{array}{c} v_{n-1}\\
v_{n}
\end{array}
\right)
$$
and 
$$
a=\frac{|q|^2-1}{2q_{n-1}}\,. 
$$

\end{lemma}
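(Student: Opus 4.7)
The starting observation is that the points $q=(0,\dots,0,q_{n-1},q_n)$ and $e_n$ both lie in the $2$-plane $P=\langle e_{n-1},e_n\rangle$, whose intersection with $\mathbb H^n$ is a totally geodesic submanifold isometric to $\mathbb H^2$. Hence the unique geodesic $\gamma$ from $q$ to $e_n$ lies in $P$, and in the half-space model it is the Euclidean semicircle in $P$ orthogonal to $\pi_\infty$ through $q$ and $e_n$. Its Euclidean center in $\pi_\infty$ is $(0,\dots,0,a,0)$ with $a=(|q|^2-1)/(2q_{n-1})$ and its radius $r=\sqrt{1+a^2}$, as determined by equating the Euclidean distances from the center to $q$ and to $e_n$. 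I parametrize $\gamma(\theta)=(0,\dots,0,a+r\cos\theta,r\sin\theta)$ so that $q=\gamma(\theta_0)$ with $r\cos\theta_0=q_{n-1}-a$, $r\sin\theta_0=q_n$, and $e_n=\gamma(\theta_1)$ with $r\cos\theta_1=-a$, $r\sin\theta_1=1$. Since $P$ is totally geodesic, parallel transport along $\gamma$ preserves the orthogonal splitting $T_p\mathbb H^n=T_pP\oplus\mathrm{span}(e_1,\dots,e_{n-2})$, so I analyze the two pieces separately.

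For the components transverse to $P$, i.e.\ $v_1,\dots,v_{n-2}$, the only Christoffel symbols that enter the ODE for $X^i$ ($i<n-1$) are $\Gamma_{in}^i=\Gamma_{ni}^i=-1/p_n$; moreover $\dot\gamma^k=0$ for $k<n-1$ on our geodesic, so the parallel transport equation collapses to $dX^i/d\theta=X^i\cot\theta$. Integrating between $\theta_0$ and $\theta_1$ gives $X^i(\theta_1)=X^i(\theta_0)\sin\theta_1/\sin\theta_0=v_i/q_n$, which accounts for the first $n-2$ entries in the claimed formula.

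For the in-plane part I would use that in two dimensions any orthonormal frame $(T,N)$ along a geodesic whose first leg is the unit tangent is automatically parallel: $\nabla_T T=0$ because $\gamma$ is a geodesic, and then $\nabla_T N=0$ follows by differentiating $g(T,N)=0$ and $|N|_g=1$. The Euclidean unit tangent to $\gamma$ and its Euclidean outward-radial orthogonal direction, each multiplied by the conformal factor $p_n=r\sin\theta$, supply explicit expressions for $T(\theta)$ and $N(\theta)$ in the standard basis. Writing $(v_{n-1},v_n)=\alpha T_q+\beta N_q$ and then re-expressing $\alpha T_{e_n}+\beta N_{e_n}$ in the $(e_{n-1},e_n)$-basis realizes the in-plane parallel transport as a $2\times 2$ matrix, which upon simplification via $r^2=1+a^2$ and the four relations above collapses to the matrix written in the statement. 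The main obstacle is purely algebraic: one must invert the change-of-basis at $q$ (whose determinant equals $-q_n^2$) and carefully track signs when composing it with the change-of-basis at $e_n$; no analytic or geometric difficulty is involved, the whole argument being just the 2D parallel-transport computation dressed up in the standard $\mathbb R^n$ coordinates of the half-space model.
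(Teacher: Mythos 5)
Your proposal is correct, and it reaches the same matrix as the paper by the same overall reduction: both identify the geodesic as the Euclidean semicircle in $\langle e_{n-1},e_n\rangle$ with center $(0,\dots,0,a,0)$ and radius $\sqrt{1+a^2}$, and both observe that the components $v_1,\dots,v_{n-2}$ decouple. The difference is in how the in-plane $2\times 2$ block is computed: the paper writes down the parallel-transport ODE system $\dot X=\left(\begin{smallmatrix}\cot t & -1\\ 1 & \cot t\end{smallmatrix}\right)X$ and exhibits an explicit fundamental matrix $A(t)=\left(\begin{smallmatrix}\cos t\sin t & -\sin^2 t\\ \sin^2 t & \cos t\sin t\end{smallmatrix}\right)$, so the transport is $A(t_1)A(t_0)^{-1}$; you instead invoke the geometric fact that the hyperbolic-unit tangent/normal frame along a geodesic is parallel and realize the transport as a composition of two changes of basis $[T_{e_n}\,|\,N_{e_n}]\,[T_q\,|\,N_q]^{-1}$. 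These agree because $[T\,|\,N](\theta)$ differs from $A(\theta)$ by a constant right factor (and the scalar $r$), which cancels in the product; I checked that your route reproduces the stated matrix. Your approach is more conceptual and avoids having to guess or verify a fundamental matrix, at the cost of the explicit frame bookkeeping you describe. Two small remarks: your scalar computation $X^i(\theta_1)=X^i(\theta_0)\sin\theta_1/\sin\theta_0=v_i/q_n$ for the transverse components is the correct one and matches the factor $1/q_n$ in the lemma's statement (the paper's proof writes $(\tau_q^{e_n}(v))_k=v_k$ there, which is a slip); and the entry $cq_n$ in the stated matrix is a typo for $aq_n$, which is what both computations produce.
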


\begin{proof}
Let $\alpha\colon [t_0,t_1]\to \mathbb H^n$ be defined as  
$$
\alpha(t)=(R\cos(t)+a)e_{n-1}+R\sin(t)e_n\,,
$$
where 
$$
a=\frac{|q|^2-1}{2q_{n-1}}\,,\quad R=\sqrt{1+a^2}
$$
and 
$$
\alpha(t_0)=q\,,\quad \alpha(t_1)=e_n\,. 
$$
Then $\alpha$,  up to be parametrized, is a geodesic path connecting $q$ to $e_n$. The parallel transport equation along $\alpha$ yields 
$$
(\tau_q^{e_n}(v))_k=v_k\,,\quad k=1,\dots,n-2\,, 
$$
while 
$$
(\tau_q^{e_n}(v))_{n-1}=X_{n-1}(t_1)\,,\quad (\tau_q^{e_n}(v))_{n}=X_{n}(t_1)\,, 
$$
where the pair $(X_{n-1},X_n)$ solves 
$$
\left(\begin{array}{l}\dot X_{n-1}\\
\dot X_n
\end{array}
\right)=\left(\begin{array}{cc} {\rm cotan}\,t & -1\\
1 & {\rm cotan} \,t 
\end{array}
\right)\left(\begin{array}{l} X_{n-1}\\
X_n
\end{array}
\right)\,,\quad 
\left(\begin{array}{l} X_{n-1}(t_0)\\
X_n(t_0)
\end{array}
\right)=
\left(\begin{array}{l}v_{n-1}\\
v_n
\end{array}
\right)\,.$$
Therefore 
$$
\left(\begin{array}{l} X_{n-1}(t)\\
X_n(t)
\end{array}
\right)=A(t)A(t_0)^{-1}\left(\begin{array}{l} v_{n-1}\\
v_n
\end{array}
\right)\,,\quad A(t):=\left(\begin{array}{cc} \cos t\,\sin t & -\sin^2 t\\
\sin^2 t &  \cos t\,\sin t 
\end{array}
\right)
$$
and the claim follows. 
\end{proof}
The following two propositions give some quantitive estimates involving the map $\tau_{q}^p$.   
\begin{proposition} \label{STP1}
Let $p$ and $q$ in $\mathbb H^n$ and let $\omega$ be the global vector field  $\omega_z=z_ne_1$.  Then 
$$
|\omega_p-\tau_q^p(\omega_q)|_p\leq C\, d(p,q)\,,
$$
where $C$ depends on an upper bound on the distance between $p$ and $q$. 
\end{proposition}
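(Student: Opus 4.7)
The plan is to reduce the claim to a pointwise estimate on the covariant derivative of the global vector field $\omega=x_n\partial_1$ and then integrate along the unique geodesic segment from $q$ to $p$. Let $\alpha\colon[0,L]\to \mathbb H^n$ be the unit-speed geodesic with $\alpha(0)=q$, $\alpha(L)=p$, so that $L=d(p,q)$. Set $Y(t)=\omega_{\alpha(t)}$ and $\widetilde Y(t)=\tau_{\alpha(t)}^{q}(Y(t))\in T_q\mathbb H^n$. Since the parallel transport is an isometry between tangent spaces and it satisfies $\widetilde Y(0)=\omega_q$ and $\widetilde Y(L)=\tau_p^q(\omega_p)$, we obtain
$$
|\omega_p-\tau_q^p(\omega_q)|_p=|\tau_p^q(\omega_p)-\omega_q|_q=\left|\int_0^L \dot{\widetilde Y}(t)\,dt\right|_q\leq \int_0^L |D_tY(t)|_{\alpha(t)}\,dt=\int_0^L |\nabla_{\dot\alpha(t)}\omega|_{\alpha(t)}\,dt.
$$
Thus it suffices to bound $|\nabla_v\omega|_z$ for every unit tangent vector $v\in T_z\mathbb H^n$.

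The second step is the explicit computation of $\nabla\omega$ using the Christoffel symbols listed above. For a vector field $W=W^k\partial_k$ one has $\nabla_{\partial_j}W=(\partial_jW^k+W^i\Gamma_{ji}^k)\partial_k$; applying this to $\omega^k=x_n\delta_{k1}$ and substituting the formulas for $\Gamma_{ij}^k$ (keeping in mind $\delta_{1n}=0$ since $n\geq 2$), the mixed and diagonal terms combine and cancel to give the compact identity
$$
\nabla_{\partial_j}\omega=\delta_{j1}\,\partial_n,
$$
so that $\nabla_v\omega=v^1\partial_n$ for any $v=v^k\partial_k$. Since $|v^1\partial_n|_z=|v^1|/z_n$ and $|v|_z=|v|_{\text{Eucl}}/z_n\geq |v^1|/z_n$, we conclude the universal pointwise bound $|\nabla_v\omega|_z\leq |v|_z$.

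Plugging this into the integral estimate of the first step yields
$$
|\omega_p-\tau_q^p(\omega_q)|_p\leq \int_0^L |\dot\alpha(t)|_{\alpha(t)}\,dt=L=d(p,q),
$$
and the proposition holds with $C=1$, which is in fact stronger than the statement (no dependence on an upper bound for $d(p,q)$ is needed). The only place where a mild effort is required is the coordinate computation of $\nabla\omega$, where the three Christoffel contributions must be carefully tracked so that the apparent first-order terms cancel and leave only the single component along $\partial_n$; once this algebraic cancellation is in hand, the geometric argument via parallel transport is immediate.
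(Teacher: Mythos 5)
Your proof is correct, and it takes a genuinely different route from the paper. The paper first disposes of the case where $p,q$ lie on the same vertical line, then normalizes by an isometry so that $p=e_n$ and $q$ lies in the plane $\langle e_{n-1},e_n\rangle$, invokes the explicit solution of the parallel transport ODE along a circular geodesic (lemma \ref{computation}) to get the exact identity $|v-\tau_{\hat q}^{e_n}(\hat q_n v)|=\frac{|v_{n-1}|}{\sqrt{1+a^2}}|\hat q-e_n|$, and finally converts the Euclidean distance $|\hat q-e_n|$ into the hyperbolic one via lemma \ref{stima per d}; that last comparison is precisely where the constant acquires its dependence on an upper bound for $d(p,q)$. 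You instead use the standard ``bound the covariant derivative and integrate'' argument: the coordinate computation $\nabla_{\partial_j}\omega=\delta_{j1}\partial_n$ is correct (I checked the cancellations against the listed Christoffel symbols), it gives the pointwise bound $|\nabla_v\omega|_z\leq |v|_z$, and the identity $\frac{d}{dt}\bigl(\tau_{\alpha(t)}^{q}Y(t)\bigr)=\tau_{\alpha(t)}^{q}(D_tY(t))$ together with the isometry property of parallel transport legitimately yields $|\omega_p-\tau_q^p(\omega_q)|_p\leq d(p,q)$. Your conclusion is strictly stronger ($C=1$, uniformly in $d(p,q)$), your argument is shorter, and it would apply verbatim to any vector field with uniformly bounded covariant derivative; the paper's heavier computation is not wasted in its own economy, though, since the explicit transport formula of lemma \ref{computation} is reused in proposition \ref{STP2}, which your method does not replace.
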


\begin{proof} 
Note that in the simple case where $p$ and $q$ belong to the same vertical line, then  the claim is trivial since $|\omega_p-\tau_q^p(\omega_q)|_p=0$. We focus on the other case. 
Let $f\colon \mathbb H^n\to \mathbb H^n$ be 
$$
f(z)=\frac{1}{p_n}\mathcal {R}\left(z-\bar p\right)
$$
where $\mathcal R$ is a rotation around the $e_n$-axis such that 
$$
\mathcal {R}\left(q-\bar p\right)\in \langle e_{n-1},e_{n}\rangle\,.
$$  
In this way we have 
$$
f(p)=e_{n}\,,\quad f(q)\in \langle e_{n-1},e_n\rangle\,,\quad f_{|_*z}(\omega_z)=f(z)_n\,v \,\,\,\mbox{ for all }z\in \mathbb H^n\,, 
$$
where $v=\mathcal R(e_1)$. We set $f(q)=\hat q$  and we write $\hat q=\hat{q}_{n-1}e_{n-1}+\hat{q}_{n}e_n$. Now $\hat q_{n-1}\neq 0$ and  we can apply lemma \ref{computation} obtaining 
$$
\tau_{\hat q}^{e_n}(\hat q_{n} v)=\left(v_1,\dots, v_{n-2},\frac{1}{1+a^2}(a(a-\hat q_{n-1})+\hat q_n)v_{n-1},\frac{1}{1+a^2}(a\hat q_n-a+\hat q_{n-1})v_{n-1}\right)\,,
$$
where 
$$
a=\frac{|\hat q|^2-1}{2\hat q_{n-1}}\,. 
$$
Furthermore a direct computation gives 
$$
|v-\tau_{\hat q}^{e_n}(\hat q_{n} v)|=\frac{|v_{n-1}|}{\sqrt{1+a^2}}\,|\hat q-e_n|\,.
$$
Since $|v|=1$, keeping in mind lemma \ref{stima per d}, we have 
$$
|\omega_p-\tau_q^p(\omega_q)|_p=|v-\tau_{\hat q}^{e_n}(\hat q_{n} v)|=\frac{|v_{n-1}|}{\sqrt{1+a^2}}\,|\hat q-e_n|\leq \frac{1}{c}\, d(e_n,\hat q)=\frac{1}{c} d(p,q)\,,
$$
where $c$ is a small constant depending on $d(e_n,\hat q)=d(p,q)$. Hence the claim follows. 
%
%
\end{proof}

\begin{proposition} \label{STP2}
Let $q$, $\hat q$ and $z$ in $\mathbb H^n$ and $R>0$ be such that 
$$
q,\hat q\in {\sf B}_{R}(z)\,. 
$$
Let $v,w\in \RR^n$ be such that 
$$
|v|_q=|w|_{\hat q}=1\,.
$$
Then 
$$
|\tau_{q}^z(v)-\tau_{\hat q}^z(w)|_z\leq C\left(d(z,q)+d(z,\hat q)+d(q,\hat q)+|v-\tau_{\hat q}^{q}(w)|_{ q} \right)
$$
where $C$ is a constant depending only on $R$.  
\end{proposition}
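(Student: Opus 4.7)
The plan is to reduce the estimate, via the triangle inequality, to two pieces: one that matches a term already appearing on the right-hand side of the claim, and a holonomy-type remainder that measures the path dependence of parallel transport around the geodesic triangle with vertices $q$, $\hat q$, $z$. The remainder will then be bounded by a compactness argument after exploiting the homogeneity of $\mathbb H^n$.

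First, I would insert the intermediate vector $\tau_q^z(\tau_{\hat q}^q(w))\in T_z\mathbb H^n$ and split
$$
|\tau_q^z(v)-\tau_{\hat q}^z(w)|_z\leq |\tau_q^z(v)-\tau_q^z(\tau_{\hat q}^q(w))|_z+|\tau_q^z(\tau_{\hat q}^q(w))-\tau_{\hat q}^z(w)|_z.
$$
Since $\tau_q^z\colon (T_q\mathbb H^n,|\cdot|_q)\to (T_z\mathbb H^n,|\cdot|_z)$ is a linear isometry, the first summand equals $|v-\tau_{\hat q}^q(w)|_q$, which is precisely one of the quantities on the right-hand side of the claimed estimate.

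For the second summand I would set $u=\tau_{\hat q}^q(w)$, which is a unit vector in $T_q\mathbb H^n$ (again because parallel transport preserves norms); using the identity $w=\tau_q^{\hat q}(u)$ the summand rewrites as
$$
|\tau_q^z(u)-(\tau_{\hat q}^z\circ \tau_q^{\hat q})(u)|_z.
$$
This is the failure of parallel transport of $u$ to commute around the loop $q\to\hat q\to z\to q$ consisting of three geodesic arcs, i.e.\ a holonomy-type quantity, and it vanishes identically whenever $\hat q=q$.

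To bound it by $C\,d(q,\hat q)$ with $C=C(R)$, I would exploit the homogeneity of $\mathbb H^n$: after composing with an orientation preserving isometry one may assume $z=e_n$, so that $q,\hat q$ range in the compact set $\overline{\sf B}_R(e_n)$. Since the parallel transport ODE, whose coefficients are the Christoffel symbols recalled before Lemma \ref{computation}, depends smoothly on its endpoints, the map
$$
F(q,\hat q,u):=\tau_q^{e_n}(u)-(\tau_{\hat q}^{e_n}\circ\tau_q^{\hat q})(u)
$$
is smooth on the compact unit sphere bundle over $\overline{\sf B}_R(e_n)\times \overline{\sf B}_R(e_n)$ and vanishes on the diagonal $\hat q=q$; the mean value theorem in the $\hat q$ variable then gives $|F(q,\hat q,u)|_{e_n}\leq C\,d(q,\hat q)$ with $C$ depending only on $R$. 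Combining this with the first summand and the trivial inequality $d(q,\hat q)\leq d(z,q)+d(z,\hat q)+d(q,\hat q)$ yields the claim. The main subtlety, and essentially the only nontrivial point, is making $C$ uniform in $z$; this is precisely what the reduction by homogeneity accomplishes. A more computational alternative would be to integrate the parallel transport ODE of Lemma \ref{computation} explicitly along the two competing paths and bound the discrepancy by the area of the geodesic triangle $q\hat q z$, which for sides of length at most $R$ is controlled linearly by $d(q,\hat q)$.
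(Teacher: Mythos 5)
Your argument is correct, but it is organized quite differently from the paper's. The paper first disposes of the collinear case, then normalizes the non-collinear configuration by an isometry so that $z=e_n$ and $q,\hat q$ lie on a common vertical line in the plane $\langle e_{n-1},e_n\rangle$; it then performs a four-term splitting that mixes Euclidean operator norms with the heights $q_n,\hat q_n$, and its key quantitative input is the explicit formula of lemma \ref{computation}, which yields $\|\tau_q^z-\tau_{\hat q}^z\|\leq C\,d(q,\hat q)$, together with lemma \ref{stima per d} to convert $|q_n-1|$ and $|\hat q_n-1|$ into $d(z,q)$ and $d(z,\hat q)$ --- this is precisely where the terms $d(z,q)+d(z,\hat q)$ on the right-hand side come from. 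Your decomposition through the intermediate vector $\tau_q^z(\tau_{\hat q}^q(w))$ is cleaner: the first summand is handled \emph{exactly} by the isometry property of parallel transport, and the whole estimate reduces to the holonomy defect $\tau_q^z-\tau_{\hat q}^z\circ\tau_q^{\hat q}$ on unit vectors, which you control by smooth dependence of the transport ODE on its endpoints, compactness of the unit sphere bundle over $\overline{\sf B}_R(e_n)\times\overline{\sf B}_R(e_n)$, and vanishing on the diagonal. This avoids the case analysis and the special normalization, and in fact proves the sharper bound $|\tau_q^z(v)-\tau_{\hat q}^z(w)|_z\leq |v-\tau_{\hat q}^q(w)|_q+C\,d(q,\hat q)$, from which the stated inequality follows trivially. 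The trade-off is that your constant is non-effective (a supremum of a derivative over a compact set), whereas the paper's route gives in principle explicit constants and reuses machinery already developed in lemma \ref{computation}. Two small points you should make explicit: the mean value theorem must be applied along a path staying in the domain, which works because hyperbolic balls in the half-space model are Euclidean balls, hence Euclidean-convex; and you need the comparison $|q-\hat q|\leq C(R)\,d(q,\hat q)$ on $\overline{\sf B}_R(e_n)$, which follows from the distance formula \eqref{dist_hyperbolic} exactly as in lemma \ref{stima per d}. Neither is a gap, just a line each to add.
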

\begin{proof}
We first consider the case where the three points $q,\hat q, z$ belong to the same geodesic path. In this case we may assume that $z=e_n$ and that $q$ and $\hat q$ belong to the $e_n$ axis, i.e. 
$$
q=q_n\,e_n\quad \mbox{ and } \quad \hat q=\hat q_n\,e_n\,. 
$$
Under these assumptions  we have
$$
|\tau_{q}^z(v)-\tau_{\hat q}^z(w)|_z=\left|\frac{1}{q_n}v-\frac{1}{\hat q_n} w\right|=|v-\tau_{\hat q}^{q}(w)|_{ q}
$$
and the claim is trivial. 
Next we focus on the case where the three points do not  belong to the same geodesic path. Up to apply an isometry, we may assume:   
$z=e_n$, $q$ and $\hat q$ belonging to the same vertical line and $z,q,\hat q$ belonging to the plane $\langle e_{n-1},e_n\rangle$. Note that $q_{n-1}=\hat q_{n-1}\neq 0$. 
In the next computation we denote by $\| \cdot\|$ the norm of linear operators $\RR^n\to \RR^n$ with respect to the  Euclidean norm. Note that
$$
\|\tau_{ q}^z\|=\frac{1}{q_n}\,,\quad \|\tau_{ \hat q}^z\|=\frac{1}{\hat q_n}\,,\quad |v-\tau_{\hat q}^{q}(w)|_{ q}=\left|\frac{1}{q_n}v-\frac{1}{\hat{q}_n}w\right|\,.
$$
Taking into account that $|v|=q_n$ and $|w|=\hat q_n$, we have 
$$
\begin{aligned}
|\tau_{q}^z(v)-\tau_{\hat q}^z(w)|_z \leq & \left|1-\frac{1}{q_n}\right|\, |\tau_{q}^z(v)|+\left|\tau_{q}^z\left(\frac{1}{q_n}v-\frac{1}{\hat{q}_n}w\right)\right|+\frac{1}{\hat{q}_n}|\tau_{q}^z(w)-\tau_{\hat q}^z(w)|+
\left|\frac{1}{\hat{q}_n}-1\right|\,|\tau_{\hat q}^z(w)|\\
\leq &|q_n-1|\|\tau_{ q}^z\|+\|\tau_{ q}^z\|\,\left|\frac{1}{q_n}v-\frac{1}{\hat{q}_n}w\right|+\|\tau_{q}^z-\tau_{\hat q}^z\|+|\hat q_n-1|\|\tau_{ \hat q}^z\|\\
=& \frac{1}{q_n}\left( |q_n-1|  +  |v-\tau_{\hat q}^{q}(w)|_{ q} \right) + \frac{|\hat q_n-1|}{\hat q_n}+ \|\tau_{q}^z-\tau_{\hat q}^z\| \,.
\end{aligned}
$$
From lemma \ref{computation}, we have that $\|\tau_{q}^z-\tau_{\hat q}^z\| \leq C d(q,\hat q)$, where $C$ is a constant depending only on $R$, and from lemma \ref{stima per d} we conclude.
\end{proof}

\subsection{Local quantitative estimates for hypersurfaces}
In this subsection we prove some quantitative estimates for hypersurfaces in the hyperbolic space.

Throughout this subsection, $S$ denotes a $C^2$-regular closed hypersurface embedded in $\mathbb H^n$ satisfying a uniform touching ball condition of radius $\rho$. We notice that the hyperbolic ball of radius $\rho$ centred at $q=(\bar q,q_n)$ of radius $\rho$ is the Euclidean ball of radius $q_n\sinh(\rho)$ centred at $(\bar q, q_n	\cosh \rho)$.   

Furthermore we set 
\begin{eqnarray}
&& \rho_0= e^{-\rho} \sinh \rho\,,\label{rho0}\\
&& \rho_1= (1-\rho_0) \rho_0\,. \label{rho1}
\end{eqnarray}
Notice that $\rho_0$ is the Euclidean radius of a hyperbolic ball of radius $\rho$ with center at $(0,\ldots,0,e^{-\rho})$. Therefore if $e_n$ belongs to $S$, then $S$ satisfies an Euclidean touching ball condition of radius $\rho_0$ at $e_n$.

Note that, since $S$ satisfies a uniform touching ball condition of radius $\rho$, every geodesic ball $\mathcal B_{r}(p)$ of radius $r\leq \rho_0$ in $S$ is such that 
\begin{equation}\label{geodesic_disk}
|\mathcal B_{r}(p)|\geq cr^{n-1}\,,
\end{equation}
where $c$ depends only on $n$.  The inequality can be easily proved assuming $p=e_{n}$ and $T_pS=\pi_{\infty}$ and then applying lemma \ref{stima per d}. 

\begin{lemma}\label{fejahyp}
Assume $e_n\in S$ and 
$T_{e_n}S=\pi_{\infty}$. Then $S$ can be locally written around $e_n$ as the graph a $C^2$-function  $v\colon B_{\rho_1}\subset \pi_{\infty}\to \RR$,
satisfying
\begin{equation} \label{bounds on u}
v(O)=1\,,\quad |v(x)-1| \leq \rho_1 - \sqrt{\rho_1^2-|x|^2}\,,\quad |\nabla v(x)| \leq \frac{|x|}{\sqrt{\rho_1^2-|x|^2}}\,
\end{equation}
for every $x \in B_{\rho_1}$. 
\end{lemma}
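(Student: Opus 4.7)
The plan is to transfer the hyperbolic touching-ball condition to a uniform Euclidean touching-ball condition of radius $\rho_1$ in a neighborhood of $e_n$, and then invoke a standard Euclidean graph lemma.

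Since $T_{e_n}S=\pi_\infty$, the implicit function theorem produces a local $C^2$ graph $v$ of $S$ over a small disk in $\pi_\infty$ about $O$, with $v(O)=1$ and $\nabla v(O)=0$. The key computation is to determine the Euclidean radius $r_E$ of the hyperbolic touching ball at a generic point $p=(\bar p,p_n)\in S$ with Euclidean unit normal $\nu=(\bar\nu,\nu_n)$. Recall that the hyperbolic ball of hyperbolic radius $\rho$ with hyperbolic center $(\bar c,c_n)$ coincides with the Euclidean ball of radius $c_n\sinh\rho$ centered at $(\bar c,c_n\cosh\rho)$. Requiring that $T_pS$ be its tangent plane at $p$ forces its Euclidean center to lie on the Euclidean normal line through $p$, and imposing the two relations $|s|=c_n\sinh\rho$ and $c_n\cosh\rho=p_n+s\nu_n$ (where $s$ is the signed distance from $p$ to the Euclidean center along $\nu$) yields
\[
r_E=\frac{p_n\sinh\rho}{\cosh\rho-\epsilon\,\nu_n\sinh\rho},\qquad \epsilon=\pm 1,
\]
for the two touching balls. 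Since $|\nu_n|\leq 1$, the denominator is at most $\cosh\rho+\sinh\rho=e^\rho$, so
\[
r_E\geq p_n\,e^{-\rho}\sinh\rho=p_n\rho_0.
\]

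On the open neighborhood $\mathcal N:=\{p\in S:p_n>1-\rho_0\}$ of $e_n$ in $S$, the previous inequality gives $r_E\geq(1-\rho_0)\rho_0=\rho_1$. Since any Euclidean ball of radius at least $\rho_1$ tangent to $S$ at $p$ on one side contains the Euclidean ball of radius $\rho_1$ tangent at $p$ on the same side, $S$ satisfies a uniform Euclidean touching-ball condition of radius $\rho_1$ on $\mathcal N$.

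I then invoke the standard Euclidean graph lemma: a $C^2$ hypersurface with a uniform Euclidean touching-ball condition of radius $r$ is a $C^{1,1}$ graph over the tangent plane at any point with horizontal tangent plane on the disk of radius $r$, and the graph satisfies $|v(x)-v(O)|\leq r-\sqrt{r^2-|x|^2}$ together with $|\nabla v(x)|\leq |x|/\sqrt{r^2-|x|^2}$. Applied at $e_n$ with $r=\rho_1$, this yields the graph $v$ on $B_{\rho_1}$ with the claimed estimates; the bound on $|v-1|$ forces $v>1-\rho_1>1-\rho_0$, so the graph stays inside $\mathcal N$ and the bootstrap closes (the uniform Euclidean touching-ball radius is preserved along the whole graph). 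The main technical obstacle is the explicit computation of $r_E$ and the uniform lower bound $p_n\rho_0$; the specific choice $\rho_1=(1-\rho_0)\rho_0$ is exactly what is needed to match the Euclidean touching-ball radius at the minimal height attained by the graph on $B_{\rho_1}$, ensuring the bootstrap is consistent.
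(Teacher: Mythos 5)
Your proposal is correct and follows essentially the same route as the paper: the paper's proof likewise observes that every point of $S$ in a neighborhood of $e_n$ with height above $1-\rho_0$ satisfies a Euclidean touching ball condition of radius $\rho_1=(1-\rho_0)\rho_0$, and then cites the Euclidean graph lemma (Lemma 2.1 of the authors' earlier paper) for the stated bounds. Your explicit computation of the Euclidean radius $r_E=p_n\sinh\rho/(\cosh\rho-\epsilon\nu_n\sinh\rho)\geq p_n\rho_0$ and the consistency check that the graph stays in the region $\{p_n>1-\rho_0\}$ simply fill in details the paper leaves implicit.
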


\begin{proof}
Since $S$ satisfies a touching ball condition of radius $\rho$, then any point $q\in S\cap (B_{\rho_0}\times (1-\rho_0,1+\rho_0))$ satisfies an Euclidean touching ball condition of radius $\rho_1$. The claim then follows from \cite[lemma 2.1]{JEMS}.
\end{proof}

Note that accordingly to the terminology introduced in the first part of section \ref{preliminar}, the graph of the map $v$ in the statement above is denoted by $\mathcal U_{\rho_1}(e_n)$.   

\begin{proposition}\label{fejahyp2}
There exists $\delta_0=\delta_0(\rho)$ such that if $p,q\in S$ with $d_S(p,q)\leq \delta_0$ then
\begin{equation} \label{bound on nu N+1}
g_p(N_p, \tau_q^p(N_q))\geq  \sqrt{1-C^2d_S(p,q)^2}\ \ \quad  \textmd{ and } \quad \ \ |N_p - \tau_q^p(N_q)|_p \leq C d_S(p,q)\,,
\end{equation}
where $C$ is a constant depending only on $\rho$.
\end{proposition}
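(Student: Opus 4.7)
The two bounds in \eqref{bound on nu N+1} are essentially equivalent. Since parallel transport is a linear isometry of tangent spaces, $|\tau_q^p(N_q)|_p = |N_q|_q = 1 = |N_p|_p$, and therefore
\[
|N_p - \tau_q^p(N_q)|_p^{\,2} = 2\bigl(1 - g_p(N_p,\tau_q^p(N_q))\bigr).
\]
So once I prove $|N_p - \tau_q^p(N_q)|_p \leq C\, d_S(p,q)$, I automatically get $g_p(N_p,\tau_q^p(N_q)) \geq 1 - \tfrac12 C^2 d_S(p,q)^2 \geq \sqrt{1 - C^2 d_S(p,q)^2}$ as long as $C\,d_S(p,q) \leq 1$, which fixes the role of $\delta_0 = 1/C$. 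With the constant $C=\coth(\rho)$ produced below, this yields $\delta_0 = \tanh(\rho)$.

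\textbf{Main step.} Rather than use the coordinate formulas of Propositions \ref{STP1}--\ref{STP2}, I work intrinsically. Fix any piecewise $C^1$ curve $c\colon [0,1]\to S$ with $c(0)=p$ and $c(1)=q$, and introduce
\[
Y(t) = \tau_{c(t)}^{p}\bigl(N_{c(t)}\bigr) \in T_p\mathbb H^n,
\]
a curve in the fixed vector space $T_p\mathbb H^n$. Combining the composition rule $\tau_{c(t+h)}^{p}=\tau_{c(t)}^{p}\circ\tau_{c(t+h)}^{c(t)}$ with the defining formula of the covariant derivative along a curve produces the standard identity
\[
\dot Y(t) = \tau_{c(t)}^{p}\bigl(\nabla_{\dot c(t)} N\bigr),
\]
so $|\dot Y(t)|_p = |\nabla_{\dot c(t)} N|_{c(t)}$ because $\tau$ is an isometry. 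Since $|N|\equiv 1$ on $S$, the derivative $\nabla_{\dot c} N$ is tangent to $S$ and coincides with $-W_{c(t)}(\dot c(t))$, where $W$ is the shape operator of $S$.

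\textbf{Shape-operator bound and conclusion.} The uniform touching-ball condition of radius $\rho$ implies, by comparison with the interior and exterior tangent hyperbolic spheres of radius $\rho$ (whose principal curvatures all equal $\coth(\rho)$, as recalled in the Introduction), that every principal curvature of $S$ satisfies $|\kappa_i|\leq \coth(\rho)$. Hence $\|W_z\|_{\mathrm{op}}\leq \coth(\rho)$ uniformly for $z\in S$, so $|\dot Y(t)|_p\leq \coth(\rho)\,|\dot c(t)|_{c(t)}$. Integrating in the vector space $T_p\mathbb H^n$ and passing to the infimum over curves $c\subset S$ joining $p$ to $q$,
\[
|N_p-\tau_q^p(N_q)|_p = |Y(1)-Y(0)|_p \leq \coth(\rho)\,d_S(p,q).
\]
This delivers the estimate with $C=\coth(\rho)$. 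The one subtle point to verify carefully is the identity $\dot Y(t)=\tau_{c(t)}^{p}(\nabla_{\dot c(t)} N)$: it is standard in Riemannian geometry but it is exactly where I use that $\tau$ is the Levi-Civita parallel transport, and not just a smooth linear transport. The uniform curvature bound coming from the touching-ball condition is routine, but equally essential, because it is what turns a length bound on $c$ into the claimed bound on $|N_p-\tau_q^p(N_q)|_p$.
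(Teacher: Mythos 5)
Your strategy is genuinely different from the paper's proof, which normalizes to the configuration $p=e_n$, $q=te_n$ (so that $\tau_q^p$ becomes multiplication by $1/t$ and the hyperbolic inner product reduces to the Euclidean one) and then imports the Euclidean normal-vector estimate of \cite[Lemma 2.1]{JEMS}. Your intrinsic route via the shape operator is attractive, and two of its ingredients are fine: the algebraic reduction between the two inequalities (both vectors are unit, so $|N_p-\tau_q^p(N_q)|_p^2=2(1-g_p(N_p,\tau_q^p(N_q)))$), and the bound $\|W\|\leq\coth\rho$ from the two-sided touching ball condition. But the step you yourself flag as ``the one subtle point'' is exactly where the argument breaks. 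The composition rule $\tau_{c(t+h)}^{p}=\tau_{c(t)}^{p}\circ\tau_{c(t+h)}^{c(t)}$ is \emph{false} for geodesic parallel transport in $\mathbb H^n$: parallel transport is path dependent, and the two sides differ by the holonomy around the geodesic triangle with vertices $p$, $c(t)$, $c(t+h)$, which in hyperbolic space is a rotation by an angle equal to the area of that triangle. That area is of order $h\,|\dot c(t)|\,d(p,c(t))$, i.e.\ \emph{first} order in $h$, so the discrepancy survives differentiation. Consequently the identity $\dot Y(t)=\tau_{c(t)}^{p}(\nabla_{\dot c(t)}N)$ does not hold for your $Y$; the genuinely standard identity of this form is valid only when the transport is taken along the curve $c$ itself, which is not the transport appearing in the statement of the proposition.

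The gap is repairable, but it needs a holonomy/curvature estimate that is absent from your write-up. Either (a) keep your $Y$ and record the correction: $\dot Y(t)=\tau_{c(t)}^{p}(\nabla_{\dot c(t)}N)+E(t)$ with $|E(t)|\leq C_1\,d(p,c(t))\,|\dot c(t)|$ for an absolute constant $C_1$ (the sectional curvature is $-1$); since a near-minimizing $c$ satisfies $d(p,c(t))\leq d_S(p,q)\leq\delta_0$, integration gives the conclusion with constant $\coth\rho+C_1\delta_0$ rather than $\coth\rho$. Or (b) define $Y$ using parallel transport along $c$, for which your derivative identity is correct, deduce $|N_p-P_c(N_q)|_p\leq\coth(\rho)\,\mathrm{length}(c)$, and then separately bound $|P_c(N_q)-\tau_q^p(N_q)|_p$ by the holonomy of the closed loop formed by $c$ and the geodesic from $q$ to $p$; for a near-minimizing $c$ this loop has length at most $2d_S(p,q)$ and hence bounds area $O(d_S(p,q)^2)$, an admissible higher-order error. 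Either fix yields the proposition with a constant depending only on $\rho$, but as written the central identity is unjustified and is not, as claimed, the standard one.
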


\begin{proof}
%
%
%
%
We will choose $\delta_0=\min(r_2,1/C)$, see below for the definition of $r_2$ and $C$. 

Possibly after applying an isometry, we can assume that $p=e_{n}$ and $q=t e_{n}$. 
We notice that any point in $S$ which is far from $e_n$ less than $\rho$ satisfies an Euclidean touching ball condition of radius $r_1$, where $r_1$ depends only on  $\rho$. Moreover from lemma \ref{stima per d}, there exists $0<r_2=r_2(\rho)$ such that if $d(e_{n},q)\leq r_2$ then $|e_{n}-q| \leq r_1/2$; this implies that, being 
$$
d(p,q) \leq d_S(p,q)\leq r_2\,,
$$ 
we have 
$$
|1-t|=|p-q| \leq r_1/2 \,.
$$
Now we can apply  the Euclidean estimates in \cite[lemma 2.1]{JEMS} to $p$ and $q$ (with $r_1$ in place of $\rho$) and we obtain 
$$
\nu_p \cdot \nu_q \geq \sqrt{1-\frac{|p-q|^2}{r_1^2}} \,.
$$
Since $d(p,q)\leq \rho$, from \eqref{dist_equiv} we have that $|p-q|\leq C_1 d(p,q) \leq C_1 d_S(p,q)$ for some constant $C_1=C_1(\rho)$, and hence
\begin{equation}\label{intermediate}
\nu_p \cdot \nu_q \geq \sqrt{1-C^2d_S(p,q)^2} \,,
\end{equation}
where $C=C_1/r_1$ and provided that $d_S(p,q) < 1/C$. Since
$$
N_p=\nu_p\,,\quad \nu_q=\frac{1}{t}N_q=\tau_q^p(N_q) \,,
$$
inequality \eqref{intermediate} can be written as
$$
g_p(N_p, \tau_q^p(N_q))\geq  \sqrt{1-C^2d_S(p,q)^2} \,,
$$
which is the first inequality in \eqref{bound on nu N+1}. The second inequality in \eqref{bound on nu N+1} follows by a direct computation.
\end{proof}

\begin{lemma} \label{lemma bound on d Ga}
For any $0<\alpha<\frac{1}{2}\min(1,\rho_1^{-1})$ there exists a universal constant $C$ such that if $q \in \mathcal{U}_{\alpha\rho_1}(p)$, then
\begin{equation}
 \label{disk}  d_S(p,q) \leq \alpha C\rho_1
\end{equation}
and
\begin{equation}
 \label{bound on d Ga above}  d(p,q) \leq d_S(p,q) \leq C \cosh(\rho_1) d(p,q) \,.
\end{equation}

\end{lemma}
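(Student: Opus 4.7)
By the isometric invariance built into the definition of $\mathcal U_r(p)$, I would begin by reducing to the model configuration $p = e_n$ and $T_p S = \pi_{\infty}$, so that Lemma \ref{fejahyp} realises $S$ locally as the Euclidean graph of a $C^2$-function $v \colon B_{\rho_1} \to \RR$ satisfying the bounds \eqref{bounds on u}. Writing $q = (x, v(x))$ with $|x| < \alpha \rho_1$, the task reduces to comparing intrinsic lengths on this graph with the hyperbolic distance $d(p,q)$ in the ambient space.

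For the estimate \eqref{disk}, the natural competitor is the curve $\gamma \colon [0,1] \to S$ given by $\gamma(t) = (tx, v(tx))$, whose hyperbolic length bounds $d_S(p,q)$ from above by
\[
\int_0^1 \frac{\sqrt{|x|^2 + (\nabla v(tx)\cdot x)^2}}{v(tx)}\,dt.
\]
Under the hypothesis $\alpha < \tfrac{1}{2}\min(1,\rho_1^{-1})$, one has $|tx| \leq \alpha \rho_1 \leq 1/2$, and the estimates \eqref{bounds on u} supply a uniform lower bound on $v(tx)$ (bounded away from $0$) together with a uniform upper bound on $|\nabla v(tx)|$ (controlled by $\alpha/\sqrt{1-\alpha^2}$). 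Together these yield $d_S(p,q) \leq C|x| \leq C\alpha\rho_1$ for a universal $C$, which is \eqref{disk}.

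For \eqref{bound on d Ga above}, the left inequality is immediate since any curve on $S$ is admissible for the ambient distance. For the right inequality, I would invoke Lemma \ref{stima per d} with a fixed radius $R$ covering the region in question (one can take $R = 1$, which is admissible since $\rho_0 = e^{-\rho}\sinh\rho < 1/2$ forces $\rho_1 < 1/4$) to obtain $d(p,q) \geq c|p-q|$ for a universal $c$. Projecting onto the first $n-1$ coordinates gives $|p-q| \geq |x|$, hence $|x| \leq d(p,q)/c$. Combined with $d_S(p,q) \leq C|x|$ from the previous step this yields $d_S(p,q) \leq (C/c)\, d(p,q)$, and a careful bookkeeping of the dependence of these constants on the range $v(tx) \in [1-\rho_1, 1+\rho_1]$ refines the prefactor to the stated $C \cosh(\rho_1)$.

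The only real obstacle is this last bit of constant-tracking; the geometric content is merely that the uniform touching ball condition flattens $S$ in the appropriate Euclidean coordinates, which forces intrinsic and extrinsic distances to be comparable with constants depending only on $\rho_1$.
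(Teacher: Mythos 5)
Your argument is correct and follows essentially the same route as the paper: reduction to the configuration $p=e_n$, $T_pS=\pi_\infty$, the competitor curve $\gamma(t)=(tx,v(tx))$ with the gradient and height bounds from Lemma \ref{fejahyp} to get $d_S(p,q)\leq C|x|$, and then $|x|\leq|p-q|$ together with the constants from Lemma \ref{stima per d} for \eqref{bound on d Ga above}. Your remarks that $\rho_1<1/4$ always holds (so the constants are genuinely universal) and that the left inequality in \eqref{bound on d Ga above} is trivial are correct and merely make explicit what the paper leaves implicit.
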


\begin{proof}
Possibly after applying an isometry, we can assume that $p=e_n$ and $\nu_p=e_{n}$.
Lemma \ref{fejahyp} implies that $S$ is the graph of a $C^2$ function $v\colon B_{\rho_1} \to \RR$.  Let $q=(x,v(x))$ with $|x| < \rho_1$ (so that  $q \in \mathcal{U}_{\rho_1}(p)$) and
consider the curve  $\gamma\colon [0,1]\to \mathcal{U}_{\rho_1}(p)$ joining $p$ with $q$ defined by   $\ga(t)=(tx,v(tx))$. Then
$$
\dot\gamma(t)=(x,\nabla v(tx)\cdot x)\,.
$$
The Cauchy-Schwartz inequality implies
$$
|\dot\gamma(t)|\leq |x| \sqrt{1+|\nabla v(tx)|^2}\,.
$$
Therefore inequality  \eqref{bounds on u}  in lemma \ref{fejahyp} implies
$$
 |\dot\gamma(t)|\leq \,\frac{\rho_1 |x|}{\sqrt{\rho_1^2-t^2|x|^2}} \leq \frac{|x|}{\sqrt{1-\alpha^2}} \leq\frac{2}{\sqrt{3}}|x|,
$$
for $0 \leq |x| \leq \alpha \rho_1$.
Since
\begin{equation*}
d_S(p,q) \leq \int_{0}^1 \frac{|\dot\gamma(t)|}{v(tx)} dt \, ,
\end{equation*}
and from \eqref{bounds on u} we obtain that
$$
d_S(p,q) \leq C |x|
$$
for some universal constant $C$, which implies \eqref{disk}. Being
$$
|x|  \leq |p-q|, 
$$
a careful analysis of the constant appearing in \eqref{dist_equiv} gives \eqref{bound on d Ga above}. 
\end{proof}


\begin{lemma} \label{lemma change normal I}
Assume $p=te_n \in S$, for some $t\in [1,\infty)$ and $\nu_p$ be such that
$$
\nu_p\cdot e_{n}>0\,,\quad |\nu_p-e_{n}|\leq \ep \,,
$$
for some $0\leq \ep < 1$. Then, in a neighborhood of $p$, there exists a $C^{2}$-function $v: B_{r}\to \RR$, with $r=\rho_1 \sqrt{1-\ep^2}$, such that $p=(0,v(0))$ and $S$ is locally the graph of $v$.
\end{lemma}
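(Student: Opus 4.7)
The plan is to reduce to the Euclidean setting of Lemma \ref{fejahyp} and then compensate for the tilt of $\nu_p$ by a horizontal projection argument.

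First I would observe that the Euclidean dilation $z\mapsto z/t$ is a hyperbolic isometry sending $p=te_n$ to $e_n$ and preserving the direction of the Euclidean unit normal; so I may assume $t=1$ and $p=e_n$. Exactly as in the proof of Lemma \ref{fejahyp}, the hyperbolic touching ball condition of radius $\rho$ then yields a uniform Euclidean touching ball condition of radius $\rho_1$ in a Euclidean neighborhood of $p$. By \cite[lemma 2.1]{JEMS}, $S$ is locally the Euclidean graph of a $C^2$ function
$$
f\colon D_{\rho_1}\subset T_pS\longrightarrow\RR,\qquad f(0)=0,\ \nabla f(0)=0,
$$
satisfying the same pointwise bounds on $|f|$ and $|\nabla f|$ as in \eqref{bounds on u} (with $D_{\rho_1}$ the Euclidean disk of radius $\rho_1$ in the tangent plane $T_pS$).

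Next I would rewrite this as a graph over $\pi_\infty$. Decompose $\nu_p=a+b\,e_n$ with $a\in\pi_\infty$ and $b=\nu_p\cdot e_n>0$. The assumption $|\nu_p-e_n|\leq\ep$ gives $|a|^2=1-b^2\leq\ep^2$, so
$$
b=\sqrt{1-|a|^2}\ \geq\ \sqrt{1-\ep^2}.
$$
Consider the horizontal projection of the graph,
$$
\Phi\colon D_{\rho_1}\longrightarrow\pi_\infty,\qquad \Phi(y)=y_{\mathrm{horiz}}+f(y)\,a.
$$
The linear part of $\Phi$ at $0$ is the vertical projection $T_pS\to\pi_\infty$; restricted to $D_{\rho_1}$ it has image the ellipsoid
$$
\bigl\{w\in\pi_\infty : w_{\parallel a}^2/b^2 + |w_{\perp a}|^2 \leq \rho_1^2\bigr\},
$$
whose smallest semi-axis equals $\rho_1 b\geq\rho_1\sqrt{1-\ep^2}$. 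The pointwise bounds on $f$ and $\nabla f$ control the nonlinear correction $f(y)\,a$ by a factor of $|a|\leq\ep$ times a small quantity vanishing at $y=0$, so a quantitative inverse function/contraction-mapping argument shows that $\Phi$ is a $C^2$-diffeomorphism from a suitable neighborhood of $0$ in $D_{\rho_1}$ onto an image containing $B_r\subset\pi_\infty$ with $r=\rho_1\sqrt{1-\ep^2}$. Setting
$$
v(x)=1+\bigl(\Phi^{-1}(x)\bigr)_n+b\,f\bigl(\Phi^{-1}(x)\bigr),\qquad x\in B_r,
$$
produces the desired $C^2$ function, with $v(0)=1=p_n$ (so $p=(0,v(0))$) and $S$ locally the graph of $v$.

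The main obstacle will be the quantitative inversion of $\Phi$: one has to verify, by exploiting the explicit bounds on $f$ and $\nabla f$ together with the identity $b=\sqrt{1-|a|^2}$, that the image of $\Phi$ actually contains the full disk of radius $\rho_1\sqrt{1-\ep^2}$ and not a slightly smaller one. This reduces to showing that the small nonlinear perturbation $f(y)\,a$ does not shrink the inscribed ball of the linear projection below $\rho_1 b$, which is an elementary but careful bookkeeping of the touching-ball estimates.
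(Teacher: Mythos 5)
Your reduction to the Euclidean setting (the dilation $z\mapsto z/t$, and the fact that nearby points inherit a Euclidean touching ball condition of radius $\rho_1$) coincides with the paper's first step. But the paper then finishes in one line by citing the Euclidean analogue of the \emph{full} statement, namely \cite[Lemma 3.4]{JEMS}, which already handles a tilted normal and produces the graph over $B_{\rho_1\sqrt{1-\ep^2}}$ directly from the tangent balls. You instead invoke only the untilted version \cite[Lemma 2.1]{JEMS} (a graph over $T_pS$ with the bounds \eqref{bounds on u}) and try to bridge the gap by a horizontal projection $\Phi$; that bridge does not close.

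The failure sits exactly in the step you defer as ``bookkeeping''. Write $\nu_p=a+b\,e_n$ with $b=\nu_p\cdot e_n$. To cover the boundary point $\rho_1\sqrt{1-\ep^2}\,\hat a$ of $B_r$ you must use parameters $y\in T_pS$ with $|y|\geq \rho_1\sqrt{1-\ep^2}/b$, which is arbitrarily close to $\rho_1$ when $|a|$ is near its maximal value (for $\ep=1/2$ one gets $|y|\approx 0.99\,\rho_1$). There \eqref{bounds on u} gives only $|\nabla f(y)|\le |y|/\sqrt{\rho_1^2-|y|^2}$, which blows up, so $|a|\,|\nabla f(y)|$ is not dominated by the smallest singular value $b$ of the linear projection and $D\Phi$ is not provably invertible; likewise $|a|\,|f(y)|$ can be of order $\ep\rho_1$, which dwarfs the margin $\rho_1\bigl(b-\sqrt{1-\ep^2}\bigr)$ between the inscribed disk of the linear image and $B_r$ (for $\ep=1/2$ that margin is below $0.01\,\rho_1$ while the correction can exceed $0.4\,\rho_1$). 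The obstruction is not an artifact of crude estimates: a surface that locally hugs a Euclidean ball of radius $\rho_1$ tangent at $p$ is compatible with every bound you use, yet it is a graph about the origin only over a disk of radius $\rho_1(1-|a|)$, which is strictly smaller than $\rho_1\sqrt{1-|a|^2}$. Hence no argument resting solely on \eqref{bounds on u} can certify the radius $\rho_1\sqrt{1-\ep^2}$; reaching it requires the finer information used in \cite[Lemma 3.4]{JEMS} (in particular that in this normalized picture the tangent balls at $p$ itself have Euclidean radius at least $\rho_0>\rho_1$), which is precisely what the paper's citation supplies and your projection scheme discards.
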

 
\begin{proof}
Notice that if $d_S(p,q)\leq \log(1-\rho_0)$, then $q_n\geq 1-\rho_0$ and $q$ satisfies an Euclidean touching ball condition of radius $\rho_1$. The claim then follows from the Euclidean case, see  \cite[lemma 3.4]{JEMS}. 
\end{proof}

\section{Curvatures of projected surfaces} \label{subsect Luigi}

In order to perform a quantitive study of the method of the moving planes, we need to handle the following situation: given a hypersurface $U$  of class $C^2$ in $\mathbb H^{n}$, we consider its intersection $U'$ with a hyperbolic hyperplane $\pi$. If $\pi$ intersects $U$ transversally, $U'=U\cap \pi$ is a hypersurface of class $C^2$ of $\pi$ and we consider its Euclidean orthogonal projection $U''$ onto $\pi_\infty$
(see figure \ref{figura1} for an example in $\mathbb H^3$).
\begin{figure}[h] 
\includegraphics[width=0.6\textwidth]{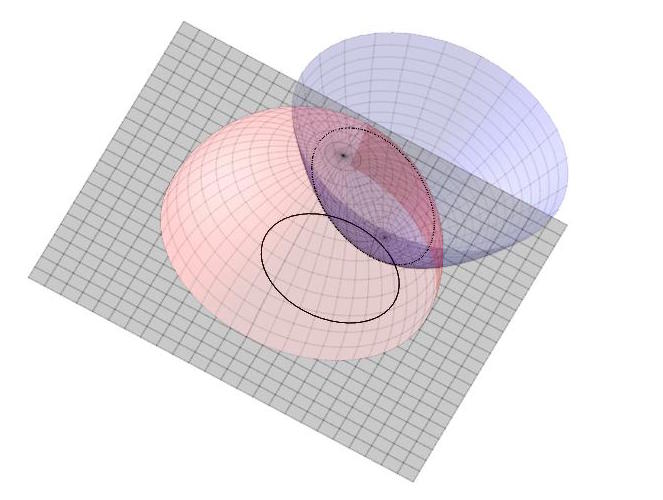}  
\caption{In the figure $U$ is the parabololid in $\mathbb H^3$ parametrized by $\chi(u,v)=(v\cos(u),1/2-v\sin(u),v^2+1/2)$
and  $\pi$ is the half-sphere about the origin of radius one.} \label{figura1}
\end{figure}
The next  propositions allow us to control the Euclidean principal curvature of $U''$ in terms of the hyperbolic principal curvature of $U$.
\begin{proposition}\label{prop Luigi I}
 Let $U$ be a $C^2$-regular embedded  hypersurface in $\mathbb H^n$ oriented by a unitary normal vector field $N$.  Let $\kappa_j$, $j=1,\ldots,n-1$, be the principal curvatures of $U$ ordered increasingly, $\pi$ be a hyperplane in $\mathbb H^n$ intersecting $U$ transversally and  $U'=U\cap \pi$. Then $U'$ is  an orientable hypersurface of class $C^2$ embedded in $\pi$ and, once a unitary normal vector filed $N'$ on $U'$ in $\pi$ is fixed, its principal curvatures $\kappa_i'$ satisfy
\begin{equation} \label{bound curv I}
\frac{1}{g_q( N_q,N_q')}\kappa_1(q)\leq \kappa'_i(q)\leq \frac{1}{ g_q( N_q,N_q') }\kappa_{n-1}(q) 
\end{equation}
for every $q\in U'$ and $i=1,\dots,n-2$. Furthermore, once a unitary normal  vector field $\omega$ on $\pi$ is fixed, we have
\begin{equation} \label{bound curv I'}
\frac{1}{\sqrt{1-g_q(\omega_q,N_q)^2}}\kappa_1(q)\leq \kappa'_i(q)\leq \frac{1}{ \sqrt{1-g_q(\omega_q,N_q)^2}}\kappa_{n-1}(q) \,,
\end{equation}
for every $q\in U'$ and a suitable choice of $N'$.
\end{proposition}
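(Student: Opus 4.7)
The plan is to reduce the proposition to a hyperbolic analogue of the classical Meusnier formula and then invoke the extremal characterization of the principal curvatures.

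I first set up the linear algebra in $T_q\mathbb H^n$. Since $\pi$ meets $U$ transversally, $T_qU' = T_qU\cap T_q\pi$ has codimension two, and its $g_q$-orthogonal complement $E_q$ is a $2$-plane containing both $N_q$ (orthogonal to $T_qU\supset T_qU'$) and $N'_q$ (which lies in $T_q\pi$ and is orthogonal to $T_qU'$). The unit normal $\omega_q$ of $\pi$ also belongs to $E_q$ and is orthogonal to $N'_q$, so $\{N'_q,\omega_q\}$ is an orthonormal basis of $E_q$. In particular
\[
g_q(N_q,N'_q)^2+g_q(N_q,\omega_q)^2=1,
\]
and transversality is equivalent to $N_q\not\parallel\omega_q$, hence to $g_q(N_q,N'_q)\neq 0$. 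Choosing the sign of $N'$ suitably we may assume $g_q(N_q,N'_q)>0$.

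The heart of the argument is the identity
\[
\kappa_U(v)=g_q(N_q,N'_q)\,\kappa_{U'}(v)\qquad\text{for every }v\in T_qU'\setminus\{0\},
\]
where $\kappa_U(v)$ and $\kappa_{U'}(v)$ denote the normal curvatures of $U$ and $U'$ in direction $v$ with respect to $N$ and $N'$. To prove it, I pick a smooth curve $\alpha\colon(-\epsilon,\epsilon)\to U'$ with $\alpha(0)=q$ and $\dot\alpha(0)=v$. Since $\alpha\subset U$, the curve-based formula for normal curvature gives $\kappa_U(v)=|v|^{-2}g_q(N_q,D_t\dot\alpha(0))$. The crucial point is that $\pi$ is totally geodesic in $\mathbb H^n$, so from $\alpha\subset\pi$ we get $D_t\dot\alpha(0)\in T_q\pi=T_qU'\oplus\RR N'_q$. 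Writing $D_t\dot\alpha(0)=T+|v|^2\kappa_{U'}(v)N'_q$ with $T\in T_qU'$ and pairing with $N_q$, the vanishing of $g_q(N_q,T)$ (since $N_q\perp T_qU'$) yields the stated identity.

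The inequalities then follow immediately. The normal curvature of $U$ in any nonzero tangent direction satisfies $\kappa_1(q)\leq\kappa_U(v)\leq\kappa_{n-1}(q)$, so specializing to a principal direction $v_i\in T_qU'\subset T_qU$ of $U'$ and dividing by the positive quantity $g_q(N_q,N'_q)$ gives \eqref{bound curv I}. Inequality \eqref{bound curv I'} follows from \eqref{bound curv I} by substituting $g_q(N_q,N'_q)=\sqrt{1-g_q(\omega_q,N_q)^2}$, which is available from the orthonormality of $\{N'_q,\omega_q\}$ once $N'$ is oriented as above. The only delicate point in the whole proof is the repeated use of the totally geodesic character of $\pi$, which lets the intrinsic covariant derivative along $\alpha$ inside $\pi$ coincide with the ambient one and makes the decomposition of $D_t\dot\alpha(0)$ transparent; everything else is elementary two-dimensional linear algebra in $E_q$.
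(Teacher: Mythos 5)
Your proof is correct and follows essentially the same route as the paper: both decompose $N_q$ in the $2$-plane spanned by $\omega_q$ and $N'_q$, use that $\pi$ is totally geodesic to discard the $\omega$-component of the second fundamental form, and conclude by the min--max characterization of the principal curvatures (you phrase the key identity via the acceleration $D_t\dot\alpha(0)$ of curves in $U'$, the paper via the Weingarten map $g_q(\nabla_v\tilde N,v)$ with an extension of the decomposition coefficients --- the same computation). The only item you leave implicit is the assertion that $U'$ is a $C^2$ orientable hypersurface of $\pi$ (the paper obtains this from the implicit function theorem together with an explicit Hodge-star formula for $N'$), and your Pythagorean derivation of $g_q(N_q,N'_q)=\sqrt{1-g_q(\omega_q,N_q)^2}$ from the orthonormality of $\{N'_q,\omega_q\}$ is in fact the correct form of a relation that the paper's proof writes without the square root.
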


\begin{proof}
Up to apply an isometry, we may assume that $\pi$ is the vertical hyperplane $\{p_1=0\}$.

First observe that  $U'$ is of class $C^2$ by the implicit function theorem and it is orientable since 
\begin{equation} \label{nu primo}
N'_q=(-1)^{n}\frac{*(*(\nu_q\wedge \partial_{x_1})\wedge \partial_{x_1})}{|*(*(\nu_q\wedge \partial_{x_1})\wedge \partial_{x_1})|_q}
\end{equation}
defines a unitary normal vector field on $U'$, where $\nu_q=\frac{1}{q_n}N_q$ is the Euclidean normal vector filed on $U$ and  $*$ is the Euclidean Hodge star operator in $\RR^{n}$.

In order to prove \eqref{bound curv I}: fix $q\in U'$ and consider a vector  $v\in T_qU'$  satisfying $|v|_q=1$. Set 
$$
\kappa_q(v)=g_q(\nabla_{v}\tilde N,v)\,,
$$
where $\tilde N$ is an arbitrary extension of $N$ in a neighborhood of $q$ and $\nabla$ is the Levi-Civita connection of $g$.
Since $N_q$ is orthogonal to $T_qU'$,  it belongs to the plane generated by $\partial_{x_1}$ and $N'_q$ and we can write
$$
N=a\, \partial_{x_1}+ b N'\,,
$$
where
$$
b=g(N,N')\,.
$$
Let $\tilde a$, $\tilde b$ and $\tilde N'$ be arbitrary extensions of $a$, $b$ and $N'$ in the whole $\mathbb H^{n}$. Therefore
$$
\tilde N=\tilde a\,  \partial_{x_1}+ \tilde b\,\tilde N'
$$
is an extension of $N$.  We have

$$
\begin{aligned}
\kappa_q(v)=\,&g_q(\nabla_{v}\tilde N,v)=g_q(\nabla_{v}(\tilde a\,  \partial_{x_1}+ \tilde b\,\tilde N'),v)\\
=&v(\tilde a)\,g_q(\partial_{x_1},v)+v(\tilde b)\,g_q(N'_q,v)+ a(q)\,g_q(\nabla_{v} \partial_{x_1},v)
+ b(q)\,g_q(\nabla_{v} \tilde N',v)\\
=&a(q)\,g_q(\nabla_{v} \partial_{x_1},v)+b(q)\,g_q(\nabla_{v} \tilde N',v)\,. 
\end{aligned}
$$
Since $\pi$ is a totally geodesic submanifold $g_q(\nabla_{v} \partial_{x_1},v)=0$, and  therefore
$$
\kappa_q(v)=g_q(N_q,N'_q)\,g_q(\nabla_{v} \tilde N',v)
$$
which implies \eqref{bound curv I}.

Now we prove \eqref{bound curv I'}. Let $\nu'_{q}=\frac{1}{q_n} N'_q$. Then $\nu'$ is an Euclidean unitary normal vector field on $U'$ and a standard computation yields 
$$
\nu_q\cdot \nu'_q=1-(\nu_q\cdot e_1)^2
$$
(see e.g. \cite[section 2.3]{JEMS}). Therefore, if $\omega_q=q_ne_1$, then 
$$
g_q( N_q,N_q')=\nu_q\cdot \nu'_q=1-(\nu_q\cdot e_1)^2=1-g_q(N_q,\omega_q)^2
$$
and \eqref{bound curv I'} follows. 
\end{proof}

Note that in the statement of proposition \ref{prop Luigi I}, the $\kappa_i'$ are the curvatures of $U'$ once it is considered a hypersurface of $\pi$ and not when it is seen as hypersurface of $U$. A bound on the principal curvatures of $U'$ as hypersurface in $U$ is given by the following proposition.   

\begin{proposition}\label{prop Luigi III}
Under the same assumptions of proposition \ref{prop Luigi I}, the principal curvatures ${\check \kappa}_i'$ of $U'$ seen  as a hypersurface of $U$ satisfy
$$
|\check \kappa'_i(q)|\leq \frac{|g_q(\omega_q,N_q)|}{\sqrt{1-g_q(\omega_q,N_q)^2}}\,\max\{|\kappa_1(q)|,|\kappa_{n-1}(q)|\}\,,
$$
where $\omega$ is a normal unitary vector field to  $\pi$. 
\end{proposition}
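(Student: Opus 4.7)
The plan is to mimic the computation in the proof of Proposition \ref{prop Luigi I}, but with the inward normal to $U'$ measured inside $U$ rather than inside $\pi$. Fix $q\in U'$. Since $U$ meets $\pi$ transversally, $T_qU'=T_qU\cap T_q\pi$ has codimension two in $T_q\mathbb H^n$, and its orthogonal complement is the two-plane spanned by $\{N_q,\omega_q\}$. Both $N'_q$ (the unit normal of $U'$ in $\pi$ from Proposition \ref{prop Luigi I}) and the unit normal $\check N'_q$ of $U'$ inside $U$ lie in this plane. Writing $\check N'_q=aN_q+b\,\omega_q$ and imposing $g_q(\check N'_q,N_q)=0$ and $|\check N'_q|_q=1$, one solves the resulting $2\times 2$ system and finds, with $c:=g_q(\omega_q,N_q)$,
\begin{equation*}
\check N'_q \;=\; \frac{1}{\sqrt{1-c^2}}\bigl(\omega_q-c\,N_q\bigr),
\end{equation*}
up to sign (note $|c|<1$ by transversality).

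Now fix a unit vector $v\in T_qU'$ and let $\gamma\colon(-\varepsilon,\varepsilon)\to U'$ be a smooth curve with $\gamma(0)=q$ and $\dot\gamma(0)=v$. The principal curvatures $\check\kappa'_i(q)$ of $U'$ regarded as a hypersurface of $U$ are the eigenvalues of the symmetric bilinear form
\begin{equation*}
v \;\longmapsto\; g_q\bigl(\nabla_{\dot\gamma}\dot\gamma,\check N'_q\bigr),
\end{equation*}
where $\nabla$ is the ambient Levi--Civita connection; using the ambient connection is legitimate here since the intrinsic connection on $U$ differs from $\nabla$ only by a multiple of $N_q$, which is orthogonal to $\check N'_q$. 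Because $\gamma$ lies in the totally geodesic hyperplane $\pi$, the ambient acceleration $\nabla_{\dot\gamma}\dot\gamma$ stays tangent to $\pi$, and therefore $g_q(\nabla_{\dot\gamma}\dot\gamma,\omega_q)=0$. Substituting the explicit expression for $\check N'_q$ then gives
\begin{equation*}
g_q\bigl(\nabla_{\dot\gamma}\dot\gamma,\check N'_q\bigr)
\;=\; -\frac{c}{\sqrt{1-c^2}}\,g_q\bigl(\nabla_{\dot\gamma}\dot\gamma,N_q\bigr)
\;=\; -\frac{c}{\sqrt{1-c^2}}\,\kappa_q(v),
\end{equation*}
where $\kappa_q(v)$ is the normal curvature of $U$ at $q$ along $v$.

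Since $\kappa_1(q)\leq \kappa_q(v)\leq \kappa_{n-1}(q)$ for every unit $v\in T_qU$, one has $|\kappa_q(v)|\leq \max\{|\kappa_1(q)|,|\kappa_{n-1}(q)|\}$, and taking absolute values in the previous display yields the bound stated in the proposition. The proof is essentially a single linear-algebra identity combined with the total-geodesicity fact already exploited in Proposition \ref{prop Luigi I}; I do not expect a real obstacle. The only small point requiring care is the sign/normalization convention for $\check N'_q$, which, however, does not affect the absolute-value estimate. A shortcut, if preferred, is to read off $\check N'_q$ directly from the formula $N'_q=\tfrac{1}{\sqrt{1-c^2}}(N_q-c\,\omega_q)$ obtained implicitly in the proof of Proposition \ref{prop Luigi I}, noting that $\check N'_q$ and $N'_q$ are obtained from one another by a $\pi/2$ rotation in $\mathrm{span}\{N_q,\omega_q\}$.
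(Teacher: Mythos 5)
Your argument is correct and follows essentially the same route as the paper: both proofs reduce to linear algebra in the normal two--plane $\mathrm{span}\{N_q,\omega_q\}$ together with the total geodesicity of $\pi$, which forces the ambient acceleration of a curve in $U'$ to be tangent to $\pi$. The only cosmetic difference is that you project the acceleration directly onto $N_q$ to obtain $\check\kappa_q'(v)=-\tfrac{c}{\sqrt{1-c^2}}\,\kappa_q(v)$ in one step, whereas the paper writes $\check N_q'$ via the Hodge star, first relates $\check\kappa_q'(v)$ to the curvature of $U'$ inside $\pi$, and then invokes proposition \ref{prop Luigi I}; the resulting identity and bound are identical.
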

\begin{proof}
We prove the statement assuming $\pi$ to be the vertical hyperplane $\{p_1=0\}$ and $\omega_p=p_n e_1$, for $p\in \pi$. 
Let $q\in U'$, $v\in T_qU'$ such that $|v|_q=1$ and let $\alpha\colon (-\delta,\delta)\to S$ be a unitary speed curve satisfying $\alpha(0)=q$, $\dot{\alpha}(0)=v$. Fix a unitary normal vector field $\tilde N'$ of $U'$ in $U$ near $q$.   We may complete $v$ with an orthonormal basis $\{v,v_2,\dots v_{n-2}\}$ of $T_qU'$ such that 
$$
\check N'_q=*_q(N_q\wedge v\wedge v_2\wedge\dots \wedge v_{n-2})\,,
$$ 
where $*_q$ is the Hodge star operator at $q$ in $\mathbb H^n$ with respect to $g$ and the standard orientation. Set 
$$
\check \kappa'_q(v)=g_{q}(*_q(\check N_q\wedge v\wedge v_2\wedge\dots \wedge v_{n-2}),D_t\dot \alpha_{|t=0})\,,
$$
where $D_t$ is the covariant derivative in $\mathbb H^n$.
Since $D_{t}\dot\alpha_{|t=0}\in \pi$, we have 
$$
\check \kappa'_q(v)=g_q(N_q,\omega_q)g_{q}(*_q(\omega_q\wedge v\wedge v_2\wedge\dots \wedge v_{n-2}),D_t\dot \alpha_{|t=0})\,. 
$$
Now, $*_q(\omega_q\wedge v\wedge v_2\wedge\dots \wedge v_{n-2})$ is a normal vector to $T_qU'$ in $\pi$ and so  
$$
\check \kappa'_q(v)=g_q(N_q,\omega_q)g_q(\nabla_{v}\tilde N,v)\,,
$$
where $\tilde N$ is an arbitrary extension of $N$ in a neighborhood of $q$. Proposition \ref{prop Luigi I} then implies 
$$
|\check \kappa'_q(v)|\leq \frac{|g_q(N_q,\omega_q)|}{\sqrt{1-g_q(\omega_q,N_q)^2}}\,\max\{|\kappa_1(q)|,|\kappa_{n-1}(q)|\}\,,
$$
as required. 
\end{proof}
%
Before giving the last result of this section, we recall the following notation introduced in the first part of the paper: given a point $q\in \mathbb{H}^{n}$, we denote by $\bar q $ its orthogonal projection onto $\pi_{\infty}$, i.e.
$$
q=(\bar q,q_{n})\,.
$$

\begin{proposition} \label{prop Luigi II}
Let $\pi$ be a non-vertical hyperplane in $\mathbb H^{n}$ and $U'$ be a $C^2$ regular hypersurface of $\pi$ oriented by a unitary normal vector field $N'$ in $\pi$.
Denote by $\kappa'_i$, for $i=1,\dots, n-2$,  the principal curvatures of $U'$.
Then the Euclidean orthogonal projection $U''$ of  $U'$ onto $\pi_{\infty}$ is a $C^2$-regular hypersurface of $\pi_{\infty}$ with a canonical orientation.
Moreover, for any $q\in U'$ we have
\begin{equation}\label{bound curvatures II}
|\kappa_i''(\bar q)| \leq \frac{1}{R}
 \left((\nu_q'\cdot e_{n})^2+\dfrac{q_{n}^2}{R^2}\right)^{-3/2}
\left(\max\{|\kappa_1'(q)|,|\kappa_{n-2}'(q)|\}+3\right),
\end{equation}
for every $i=1,\dots,n-2$,
where $\{\kappa''_i\}$ are the principal curvature of $U''$ with respect to the Euclidean metric and $R$ is the Euclidean Radius of $\pi$ and $\nu'_q=\frac{1}{q_n}N'_q$. 
\end{proposition}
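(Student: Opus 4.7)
The plan is to apply a hyperbolic isometry that is at the same time a Euclidean rigid motion of $\pi_\infty$ in order to reduce to the normalized setting where $\pi$ is the Euclidean half-sphere of radius $R$ centered at the origin of $\pi_\infty$. Then $\pi$ is the graph of $\rho(x)=\sqrt{R^2-|x|^2}$ over the open ball $B_R\subset \pi_\infty$, and the Euclidean projection $\phi\colon \pi\to B_R$ admits the smooth inverse $\psi(x)=(x,\rho(x))$. Since $U'=\psi(U'')$, the hypersurface $U''$ is automatically of class $C^2$ and inherits a canonical orientation from $U'$ via $\psi^{-1}$.

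The core step is to express the Euclidean normal $\nu''_{\bar q}$ in terms of $\nu'_q$. For $u\in T_{\bar q}U''$ the lift $\hat u:=d\psi(u)=(u,-u\cdot \bar q/q_n)$ belongs to $T_qU'\subset T_q\pi$, and the orthogonality $\hat u\cdot \nu'_q=0$ rearranges to $u\cdot \tilde\nu=0$ in $\pi_\infty$, where $\tilde\nu\in \pi_\infty$ has $i$-th component $(\nu'_q)_i-(\nu'_q)_n\bar q_i/q_n$ for $i\le n-1$. Hence $\nu''_{\bar q}$ is parallel to $\tilde\nu$. Using $\nu'_q\cdot q=0$ (so $\nu'_q\cdot \bar q=-(\nu'_q)_n q_n$) one computes
\[
|\tilde\nu|^2=\frac{R^2}{q_n^2}\,E,\qquad \tilde\nu\cdot \nu'_q=1,\qquad \tilde\nu\cdot\bar q=-(\nu'_q)_n\,\frac{R^2}{q_n},
\]
where $E=(\nu'_q\cdot e_n)^2+q_n^2/R^2$. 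Setting $\mu:=|\tilde\nu|$ these identities give $\nu''_{\bar q}\cdot \nu'_q=1/\mu$ and $\nu''_{\bar q}\cdot\bar q=-(\nu'_q)_n R^2/(q_n\mu)$.

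Next, for each principal direction $u\in T_{\bar q}U''$ I would take a unit-speed Euclidean geodesic $\alpha$ of $U''$ in $\pi_\infty$ with $\dot\alpha(0)=u$, so that $\ddot\alpha(0)=\kappa''_i(\bar q)\,\nu''_{\bar q}$, and lift to $\beta=\psi\circ \alpha$. A direct computation shows $\ddot\beta(0)=(\ddot\alpha(0),\ddot\rho(0))$ with $\ddot\rho(0)=-\bigl(1+\bar q\cdot \ddot\alpha(0)+(u\cdot \bar q)^2/q_n^2\bigr)/q_n$; substituting $\bar q\cdot \ddot\alpha(0)=-\kappa''_i(\bar q)(\nu'_q)_n R^2/(q_n\mu)$ and exploiting $\mu^2=1+(\nu'_q)_n^2 R^2/q_n^2$ yields
\[
\nu'_q\cdot \ddot\beta(0)=\kappa''_i(\bar q)\,\mu-\frac{(\nu'_q)_n\,\lambda^2}{q_n},\qquad \lambda^2:=|\hat u|^2=1+\frac{(u\cdot\bar q)^2}{q_n^2}.
\]
On the other hand $\nu'_q\cdot \ddot\beta(0)$ equals the Euclidean second fundamental form of $U'$ in $\pi$ evaluated at $\hat u$, hence is bounded in absolute value by $\lambda^2\max_j|\tilde\kappa'_j|$, where $\tilde\kappa'_j$ are the Euclidean principal curvatures of $U'$ in $\pi$. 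Solving for $\kappa''_i(\bar q)$, converting Euclidean to hyperbolic principal curvatures via the conformal identity $\tilde\kappa'_j=(\kappa'_j-(\nu'_q)_n)/q_n$, and applying the pointwise estimates $\lambda^2\le R^2/q_n^2$, $|(\nu'_q)_n|\le 1$, $q_n\le R$, together with $\mu=(R/q_n)\sqrt E$, yields \eqref{bound curvatures II}.

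The main obstacle is the careful algebraic bookkeeping needed to produce the precise exponent $-3/2$ of $E$ and the additive constant $3$: the exponent combines the reciprocal normal-renormalization factor $\mu^{-1}\sim (q_n/R)E^{-1/2}$ with the worst-case speed distortion $\lambda^2\le R^2/q_n^2$ of the lifted curve, via repeated use of $q_n\le R$; the additive constant absorbs the three bounded ``curvature-like'' contributions coming from the Euclidean bending $1/R$ of $\pi$, the first-order correction in $\ddot\rho(0)$, and the conformal shift $(\nu'_q)_n$ between hyperbolic and Euclidean principal curvatures.
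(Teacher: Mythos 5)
Your route is, up to bookkeeping, the paper's own: normalize $\pi$ to the half-sphere of radius $R$ centred at the origin, lift a curve of $U''$ to $U'$, and compare $\nu'_q\cdot\ddot\beta(0)$ with $\kappa''_i(\bar q)$. Your identities check out: writing $E:=(\nu'_q\cdot e_n)^2+q_n^2/R^2$, one indeed has $\mu^2=1+(\nu'_q)_n^2R^2/q_n^2=(R^2/q_n^2)E$ and $\nu'_q\cdot\ddot\beta(0)=\kappa''_i(\bar q)\,\mu-(\nu'_q)_n\lambda^2/q_n$. Using Euclidean arc length on $U''$ plus a conformal conversion of curvatures at the end, instead of hyperbolic arc length on $U'$ plus Christoffel symbols, is a cosmetic difference.

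The gap is in the final estimate. Your computation gives $|\kappa''_i(\bar q)|\le \frac{\lambda^2}{\mu q_n}\left(\max_j|\kappa'_j(q)|+2\right)=\frac{\lambda^2}{R\sqrt{E}}\left(\max_j|\kappa'_j(q)|+2\right)$, and the bound $\lambda^2\le R^2/q_n^2$ then only yields the prefactor $\frac{R}{q_n^2}E^{-1/2}$. Comparing with the target prefactor $\frac1R E^{-3/2}$, the inequality $\frac{R}{q_n^2}E^{-1/2}\le\frac1R E^{-3/2}$ is equivalent to $R^2E\le q_n^2$, i.e.\ to $\nu'_q\cdot e_n=0$; since $E\ge q_n^2/R^2$ your estimate is strictly \emph{weaker} than \eqref{bound curvatures II} whenever $\nu'_q\cdot e_n\neq 0$, and no amount of $q_n\le R$ can repair this (it pushes in the wrong direction). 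The missing ingredient is the sharper bound $\lambda^2=|\hat u|^2\le E^{-1}$, which uses the constraints on $\hat u$ that you never exploit: since $\hat u\perp \nu'_q$ and $\hat u\perp q$, in the orthogonal splitting $\RR^n=T_qU'\oplus\langle\nu'_q\rangle\oplus\langle q/R\rangle$ the component of $e_n$ visible to $\hat u$ has squared norm $1-(\nu'_q\cdot e_n)^2-q_n^2/R^2=1-E$, so $\hat u_n^2\le(1-E)|\hat u|^2$, and together with $|\hat u|^2=1+\hat u_n^2$ this gives $|\hat u|^2\le E^{-1}$. This is precisely the paper's inequality $q_n^2-v_n^2\ge E\,q_n^2$. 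With $\lambda^2\le E^{-1}$ your chain closes and in fact produces the additive constant $2$ rather than $3$.
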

\begin{proof}
By our assumptions, $\pi$ is a half-sphere of radius $R$ with center in $\pi_\infty$. By considering a suitable isometry, we may assume that $\pi$ has center at the origin of $\pi_\infty$.
If $X$ is a local positive oriented parametrisation of $U'$, then $\bar X=X-(X\cdot e_{n})e_{n}$ is a local parametrisation of $U''$, and we can orient $U''$ with
\begin{equation}\label{defnu2}
\nu''\circ \bar X:={\rm vers}(*(\bar X_1\wedge \bar X_2\wedge \dots \wedge \bar X_{n-2}\wedge e_{n}))\,,
\end{equation}
where $\bar X_k$ is the $k^{th}$ derivative of $\bar X$ with respect to the coordinates of its domain and $*$ is the Hodge \lq\lq star\rq\rq operator in $\RR^{n}$ with respect to the the Euclidean metric and the standard orientation. Therefore $U''$ is a $C^2$-regular hypersurface of $\pi_\infty$ oriented by the map $\nu''$.

Now we prove inequalities \eqref{bound curvatures II}. Fix a point $q=(\bar q,q_{n})\in U'$ and $\bar v\in T_{\bar q}U'$ be nonzero.
Let $\beta \colon (-\delta,\delta)\to U''$ be an arbitrary regular curve contained in $U''$ such that
$$
\beta(0)=\bar q\,,\quad \dot\beta(0)=\bar v\,.
$$
Then
$$
\kappa''_{\bar q}(\bar v)=\frac{1}{|\bar v |^2}\nu''_{\bar q}\cdot \ddot \beta(0)
$$
is the normal curvature of $U''$ at $(\bar q,\bar v)$, viewed as hypersurface of $\pi_{\infty}$ with the Euclidean metric.
We can write
 $$
\kappa''_{\bar q}(\bar v)=\frac{1}{|\bar v |^2}\nu''_{\bar q}\cdot \ddot \alpha(0)
$$
where $\alpha=(\beta,\alpha_{n})$ is a regular curve in $U'$ projecting onto $\beta$.
From
$$
\bar X_k=X_k-( X_k\cdot e_{n}) e_{n}\,,
$$
and the definition of $\nu''$ \eqref{defnu2} we have
$$
\kappa''_{\bar q}(\bar v)=\frac{(*(X_1( q)\wedge \dots \wedge X_{n-2}( q)\wedge e_{n}))\cdot \ddot\alpha(0)}{|\dot \beta|^2 | X_1(\alpha)\wedge \dots \wedge X_{n-2}(\alpha)\wedge e_{n}|}\,.
$$
We may assume that $\{X_1(q),\dots,X_{n-2}(q)\}$ is an orthonormal basis of $T_{q}U'$ with respect to the Euclidean metric.  Therefore $\{X_1(q),\dots,X_{n-2}(q),\nu'_{q}, q/R\}$ is an Euclidean orthonormal basis of $\RR^{n}$ and we can split
$\RR^{n}$ in
\begin{equation}\label{split}
\RR^{n}=T_{q}U''\oplus \langle \nu'_q\rangle\oplus \langle q/R \rangle.
\end{equation}
and $e_{n}$ splits accordingly into
$$
e_{n}=e_{n}'+e_{n}''+e_{n}'''\,.
$$
Therefore
$$
*(X_1( q)\wedge \dots \wedge X_{n-2}( q)\wedge e_{n})\cdot\ddot \alpha(0)=*(X_1(q)\wedge \dots \wedge X_{n-2}( q)\wedge e_{n}''')\cdot \ddot \alpha(0)\,,
$$
i.e.
$$
*(X_1( q)\wedge \dots \wedge X_{n-2}( q)\wedge e_{n})\cdot\ddot \alpha(0)=\frac{q_{n}}{R}*\left(X_1( q)\wedge \dots \wedge X_{n-2}( q)\wedge \frac{q}{R}\right)\cdot \ddot \alpha(0)\,.
$$
Since
$$
\nu_{q}'=*\left(X_1( q)\wedge \dots \wedge X_{n-2}(q)\wedge \frac{q}{R}\right)
$$
we obtain
$$
\kappa''_{\bar q}(\bar v)= \frac{q_{n}}{R|\dot \beta(0)|^2}\,\,\frac{\nu'_q\cdot\ddot\alpha(0)}{ |X_1( q)\wedge \dots \wedge X_{n-2}(q)\wedge e_{n}|}\,.
$$
We may assume that $\alpha$ is parametrised by arc length with respect to the hyperbolic metric, i.e.
$$
|\dot \alpha|^2=\alpha_{n}^2
$$
and so
$$
|\dot \beta|^2=\alpha_{n}^2-\dot {\alpha} _{n}^2\,,
$$
which implies
$$
\kappa''_{\bar q}(\bar v)= \frac{q_{n}}{r(q_{n}^2-v_{n}^2)}\,\,\frac{\nu'_q\cdot\ddot\alpha(0)}{ |X_1( q)\wedge \dots \wedge X_{n-2}(q)\wedge e_{n}|}\,.
$$
Finally
$$
X_1(q)\wedge \dots \wedge X_{n-1}(q)\wedge e_{n+1}=X_1( q)\wedge \dots \wedge X_{n-1}(q)\wedge e''_{n+1}+X_1( q)\wedge \dots \wedge X_{n-1}(q)\wedge e_{n+1}'''
$$
and
\begin{eqnarray*}
&& X_1( q)\wedge \dots \wedge X_{n-2}(q)\wedge e''_{n}=(\nu_q'\cdot e_{n})\,X_1( q)\wedge \dots \wedge X_{n-2}(q)\wedge\nu_q'\,,\\
&& X_1( q)\wedge \dots \wedge X_{n-2}(q)\wedge e'''_{n}=\frac{q_{n}}{R}X_1(q)\wedge \dots \wedge X_{n-2}(q)\wedge \frac{q}{R}\,.
\end{eqnarray*}
Hence
$$
|X_1(q)\wedge \dots \wedge X_{n-2}(q)\wedge e_{n}|=\left((\nu_q'\cdot e_{n})^2+\frac{q_{n}^2}{R^2}\right)^{1/2} \,.
$$
Now we set 
$$
\kappa_q'(v)=g_q(N'_q,D_t\dot \alpha_{|t=0})
$$
where $D_t$ is the covariant derivative in $\pi$. We have 
$$
D_t\dot \alpha=\ddot \alpha+\sum_{i,j,k=1}^{n} \Gamma_{ij}^k(\alpha)\dot \alpha_i\dot \alpha_j\,e_k=\ddot \alpha+\sum_{i=1}^{n}\left(-\frac{2}{\alpha_n}\dot \alpha_i\dot \alpha_n\right)e_i+\frac{1}{\alpha_n}\left(\sum_{i=1}^n\dot \alpha_i^2-\dot \alpha_n^2\right)e_n
$$
and 
$$
D_t\dot \alpha_{|t_=0}=\ddot \alpha(0)-2\frac{v_n}{q_n}\,v+\frac{1}{q_n}\left(q_n^2-v_n^2\right)e_n\,. 
$$
Therefore 
$$
\kappa_q'(v)=g_q\left(N'_q,\ddot \alpha(0)-2\frac{v_n}{q_n}\,v+\frac{1}{q_n}\left(q_n^2-v_n^2\right)e_n\right)=
\frac{1}{q_n}\nu_q'\cdot \ddot\alpha(0)-2\frac{v_n}{q_n^2}\nu_q'\cdot v+\frac{q_n^2-v_n^2}{q_n^2}\nu'_q\cdot e_n\,.
$$
and from 
$$
\nu_q'\cdot \ddot\alpha(0)=q_n \kappa_q'(v)+2\frac{v_n}{q_n}\nu_q'\cdot v-\frac{q_n^2-v_n^2}{q_n}\nu'_q\cdot e_n
$$
we get
$$
\kappa''_{\bar q}(\bar v)= \frac{q_{n}}{R(q_{n}^2-v_{n}^2)} \left((\nu_q'\cdot e_{n})^2+\frac{q_{n}^2}{R^2}\right)^{-1/2}\,\left(q_n \kappa_q'(v)+2\frac{v_n}{q_n}\nu_q'\cdot v-\frac{q_n^2-v_n^2}{q_n}\nu'_q\cdot e_n
\right)
$$
%
for every $v\in T_qU'$, $g_q(v,v)=1$. Therefore
\begin{eqnarray*}
&& \kappa''_1(\bar q)=\frac{q_{n}^2}{R}\left((\nu_q'\cdot e_{n})^2+\frac{q_{n}^2}{R^2}\right)^{-1/2}
\inf_{v\in\mathbb S^{n-2}_q} A_q(v)\,,\\
&&\kappa''_{n-2}(\bar q)=\frac{q_{n}^2}{R}\left((\nu_q'\cdot e_{n})^2+\frac{q_{n}^2}{R^2}\right)^{-1/2}
\sup_{v\in\mathbb S^{n-2}_q}A_q(v)\,,
\end{eqnarray*}
where 
$$
A_{q}(v)=\frac{1}{(q_{n}^2-v_{n}^2)}\,\left(\kappa_q'(v)+2\frac{v_n}{q_n^2}\nu_q'\cdot v-\frac{q_n^2-v_n^2}{q_n^2}\nu'_q\cdot e_n
\right)
$$
and $\mathbb{S}^{n-2}_q=\{v\in T_qU'\,\,:\,\, |v|_q=1\}$. 
Since $|\kappa_i''(\bar q)| \leq \max \{ |\kappa_1'' (\bar q)|, |\kappa_{n-2}'' (\bar q)| \}$, $i=1,\ldots,n-2$, we obtain
\begin{equation} \label{lasus}
|\kappa_i''(\bar q)| \leq \frac{q_{n}^2}{R}\left((\nu_q'\cdot e_{n})^2+\frac{q_{n}^2}{R^2}\right)^{-1/2}
\sup_{v\in\mathbb S^{n-2}_q}|A_q(v)|\,.
\end{equation}
We have
$$
\begin{aligned}
|A_q(v)|\leq &\,
\frac{1}{|q^2_n-v^2_n|}\left(\left|\kappa_q'(v)\right|+2\frac{v_n}{q_n}+\frac{q_n^2-v_n^2}{q_n^2}\right)\\
\leq&\, \frac{1}{|q^2_n-v^2_n|}\left(\left|\kappa_q'(v)\right|+3\right)\,,
\end{aligned}
$$ 
where we have used $q_n^2-v_n^2> 0$, since $|v|_q=1$.  
Since $\RR^{n}=T_qU'  \oplus  \langle \nu_q' \rangle \oplus \langle q/R \rangle $, we have that
$$
q_{n}^2 - v_{n}^2 \geq \left[ \left(\frac{q_{n}}{R}\right)^2+ (\nu_q' \cdot e_{n})^2 \right] q_{n}^2 \,,
$$
and then from \eqref{lasus} we find
\begin{equation} \label{lasus_II}
|\kappa_i''(\bar q)| \leq  \frac{1}{R}\left((\nu_q'\cdot e_{n})^2+\dfrac{q_{n}^2}{R^2}\right)^{-3/2}
\left(\sup_{v\in\mathbb S^{n-2}_q} |\kappa'_q(v)|+3\right) \,,
\end{equation}
which implies \eqref{bound curvatures II}.
\end{proof}

\begin{remark}{\em We will use the previous proposition in the following way: if there exist a constant $c$ such that   $\nu'_q\cdot e_n\geq c$, then \eqref{bound curvatures II} implies }
$$
|\kappa_i''(\bar q)| \leq \frac{1}{c^3R}
\max\{|\kappa_1'(q)|,|\kappa_{n-2}'(q)|+2\}\,,\  i=1,\dots,n-2\,. 
$$
\end{remark}

\section{Proof of theorem \ref{thm approx symmetry 1 direction}}
The set-up is the following: let $S=\partial \Omega$ be a $C^2$-regular closed hypersurface embedded in $\mathbb H^n$, where $\Omega$ is a bounded open set. We assume that $S$ satisfies a uniform touching ball condition of radius $\rho>0$. 

Let $\pi:=\{p_1=0\}$ be the critical hyperplane in the method of moving  planes along the direction $e_1$ and let $S_-=S\cap \{p_1\leq 0\}$ and $S_+^\pi$ be the reflection of $S_+=S\cap \{p_1\geq 0\}$ about $\pi$. From the method of moving planes we have that
$S_+^\pi$ is contained in $\Omega$ and tangent to $S_-$ at a point $p_0$ (internally or at the boundary). 
Let $\Sigma$ and $\hat \Sigma$ be the connected 
component of $S_+^\pi$ and $S_-$ containing $p_0$, respectively.

\subsection{ Preliminary lemmas}
Before giving the proof of Theorem \ref{thm approx symmetry 1 direction}, we need some preliminary results
about the geometry of $\Sigma$. 

For $t> 0$ we set
$$
\Sigma_t = \{p \in \Sigma:\ d_\Sigma(p,\pa \Sigma) \geq  t \}\,.
$$
The following three lemmas show quantitatively that $\Sigma_t$ is connected for $t$ small enough.

\begin{lemma} \label{lemma connected}
Assume 
\begin{equation}\label{trasversale}
\nu_p \cdot e_1 \leq \mu
\end{equation}
for every $p$ on the boundary of $\Sigma$, for some $\mu\leq 1/2$, and let $t_0=\rho\sqrt{1-\mu^2}$. 
Then $\Sigma_t$ is connected for any $0 < t < t_0$. 
\end{lemma}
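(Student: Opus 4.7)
The strategy is to build a collar of uniform width at least $t_0$ around $\partial\Sigma$ inside $\Sigma$ via the intrinsic normal exponential map from $\partial\Sigma$. Once such a collar exists, for every $0<t<t_0$ the set $\Sigma_t$ is a deformation retract of $\Sigma$ (by sliding the collar inward), so $\Sigma_t$ inherits connectedness from $\Sigma$. The role of the hypothesis \eqref{trasversale} is to quantify transversality: writing $\theta_q=\arccos(\nu_q\cdot e_1)\in(0,\pi)$ for the Euclidean angle between $\nu_q$ and $e_1$, one has $\sin\theta_q\geq\sqrt{1-\mu^2}$, so at every $q\in\partial\Sigma$ the inward unit vector $\tau_q\in T_q\Sigma$ that is normal to $\partial\Sigma$ inside $\Sigma$ has a genuine component pointing into $\{p_1<0\}$.

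I would then introduce the map
\[
\Phi\colon\partial\Sigma\times[0,t_0)\to\Sigma,\qquad \Phi(q,s)=\gamma_q(s),
\]
where $\gamma_q$ is the unit-speed intrinsic geodesic on $\Sigma$ starting at $q$ with initial velocity $\tau_q$, and show that $\Phi$ is a diffeomorphism onto the open set $\{p\in\Sigma:\,d_\Sigma(p,\partial\Sigma)<t_0\}$. The key geometric input is a comparison of $\Sigma$ with the touching ball ${\sf B}_\rho$ tangent to $\Sigma$ at $q$: this ball has its center at hyperbolic distance $\rho$ along $\nu_q$, and its intersection with the hyperplane $\pi$ is a geodesic disk of $\pi$ of hyperbolic radius at least $\rho\sin\theta_q\geq t_0$ (the hyperbolic analogue of the elementary Euclidean formula $\sqrt{\rho^2-\rho^2\cos^2\theta}=\rho\sin\theta$ for the cross section of a tangent ball, which is extracted from \eqref{dist_hyperbolic} and Lemma~\ref{stima per d}). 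Since $\Sigma$ lies on one side of $\partial{\sf B}_\rho$ locally, the intrinsic geodesic $\gamma_q$ is trapped and cannot return to the hyperplane $\pi$ until its arc length reaches at least $t_0$. Moreover the curvature bound provided by the two touching balls, combined with the graph representation of Lemma~\ref{fejahyp}, prevents focal points of $\partial\Sigma$ inside $\Sigma$ along $\gamma_q$ in the range $s<t_0$; a standard contradiction argument then promotes local injectivity to global injectivity of $\Phi$ on its declared domain.

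Once the collar is constructed, the conclusion is immediate: for $0<t<t_0$ one has $\Sigma\setminus\Sigma_t=\Phi(\partial\Sigma\times[0,t))$, and the obvious homotopy $H_\lambda(\Phi(q,s))=\Phi(q,(1-\lambda)s+\lambda t)$ on the collar, extended by the identity on $\Sigma_t$, realizes $\Sigma_t$ as a deformation retract of $\Sigma$. Since $\Sigma$ is connected by construction, $\Sigma_t$ is connected.

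The main obstacle is the sharp collar-width estimate. The factor $\sqrt{1-\mu^2}$ is precisely $\sin\theta_q$, and it reflects the size of the cross section of a tangent hyperbolic ball of radius $\rho$ with the hyperplane $\pi$. Transporting this Euclidean intuition to the hyperbolic setting, showing that $\gamma_q$ is forced to stay in $\Sigma\cap\{p_1<0\}$ until arc length $t_0$, and simultaneously ruling out focal points along the way, is the geometric heart of the argument; all of these steps lean on the hyperbolic-Euclidean comparison of Lemma~\ref{stima per d}, the graph bounds of Lemma~\ref{fejahyp}, and the uniform touching ball condition of radius $\rho$.
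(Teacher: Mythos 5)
Your endgame (a collar of width $t_0$ around $\partial\Sigma$ plus a deformation retraction of $\Sigma$ onto $\Sigma_t$) is the same as the paper's, but your route to the collar is different and has two genuine gaps. The paper never touches the intrinsic normal exponential map of $\partial\Sigma$: it projects $\Sigma$ onto the critical hyperplane $\pi$, uses proposition \ref{prop Luigi I} together with the transversality hypothesis \eqref{trasversale} to bound the principal curvatures of $\partial\Sigma$ \emph{as a hypersurface of} $\pi$ by $\bigl(\rho\sqrt{1-\mu^2}\,\bigr)^{-1}$, deduces from this and the uniform touching ball condition of $S$ that the projected domain ${\rm pr}(\Sigma)\subset\pi$ satisfies a touching ball condition of radius at least $t_0$, builds the collar $\mathcal C_s$ inside $\pi$, and pulls it back to $\Sigma$ using that the projection decreases distances and that, by criticality of $\pi$, the geodesics joining points of the cap to their projections stay in $\overline\Omega$. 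The touching ball condition on ${\rm pr}(\Sigma)$ is a \emph{global} condition: it excludes thin necks as well as high curvature, and that is exactly the information your argument lacks.

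The first and essential gap: to get injectivity of $\Phi$ on $\partial\Sigma\times[0,t)$ you must exclude \emph{cut points} of $\partial\Sigma$ in $\Sigma$ before distance $t$, not merely focal points. Cut points can occur with vanishing second fundamental form and no focal points at all (two distinct minimizing normal geodesics from $\partial\Sigma$ meeting, as across a thin neck), and this is precisely the configuration that would disconnect $\Sigma_t$; the sentence ``a standard contradiction argument then promotes local injectivity to global injectivity'' is the whole content of the lemma and cannot be asserted. The second gap is quantitative: the cross-section formula $r=\rho\sin\theta_q$ is Euclidean and fails in $\H^n$. With $\beta$ the angle at $q$ between the normal geodesic to the center $c$ and $\pi$ (so $\sin\beta=\nu_q\cdot e_1\leq\mu$), hyperbolic trigonometry gives $\sinh d(c,\pi)=\sinh\rho\,\sin\beta$ and $\cosh r=\cosh\rho/\cosh d(c,\pi)$; for $\mu=1/2$ and $\rho$ large this keeps $r$ bounded (of order $\arccosh 2$) while $t_0=\rho\sqrt{1-\mu^2}\to\infty$. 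Lemma \ref{stima per d} only provides two-sided comparability with constants depending on an upper bound for the distances involved, so it cannot recover the exact threshold $t_0$. Finally, even granting the cross-section estimate, it is not explained why the \emph{intrinsic} geodesic $\gamma_q$ on $\Sigma$ should be confined by the set ${\sf B}_\rho\cap\pi$: the barrier provided by the touching ball controls the position of $\Sigma$ in $\H^n$, not the arc length at which a curve drawn on $\Sigma$ can return to $\pi$.
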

\begin{proof} 
Let ${\rm pr}\colon \Sigma \to \pi$ be the projection from $\Sigma$ onto $\pi$. Given $p\in \Sigma$, ${\rm pr} (p)$ is defined as the closest point in $\pi$  to $p$.  
Then the boundary of ${\rm pr}(\Sigma)$ in $\pi$ coincides with the boundary $\partial \Sigma$ of $\Sigma$ in $S$. Proposition \ref{prop Luigi I} implies
$$
|\kappa'_i(p)|\leq \frac{1}{ \sqrt{1-(\nu_p\cdot e_1 )^2}}\max \{|\kappa_1(p)|,|\kappa_{n-1}(p)|\} \,,
$$
for any $p\in \partial \Sigma$ and 
$i=1,\ldots,n-1$, where $\kappa'_i$ are the principal curvatures of $\partial \Sigma$ viewed as a hypersurface of $\pi$. The touching ball condition on $S$ yields
\begin{equation} \label{pulizie}
|\kappa'_i(p)|\leq \frac{1}{ \rho \sqrt{1-(\nu_p\cdot e_1 )^2}} \,,
\end{equation}
for $i=1,\ldots,n-1$. Since any point $p \in \partial \Sigma$ satisfies a touching ball condition of radius $\rho$ (considered as a point of $S$), the transversality condition \eqref{trasversale} and \eqref{pulizie} imply that ${\rm pr}(\Sigma)$ enjoys a touching ball condition of radius $\rho'\geq \rho \sqrt{1-(\nu_p\cdot e_1 )^2} \geq t_0$. Therefore if $s<t_0$,
$$
\mathcal{C}_s = \{ z \in \pi :\ d(z, \partial \Sigma) < s \}\,,
$$
is a collar neighborhood of $\partial \Sigma$ in ${\rm pr}(\Sigma)$ of radius $s$. Since $\pi$ is a critical hyperplane in the method of the moving planes, if $p$ belongs to the maximal cap $S_{+}$ then any point on 
the geodesic path connecting $p$ to its projection onto $\pi$ is contained in the closure of $\Omega$.
It follows that ${\rm pr}^{-1}(\mathcal{C}_s)$ contains a collar neighborhood of $\partial \Sigma$ of radius $s$ in $\Sigma$ and, for $t\leq s$, $\Sigma$ can be retracted in $\Sigma_t$ and the claim follows.   
\end{proof}

\begin{lemma} \label{lemma connected II}
There exists $\delta>0$ depending only on $\rho$ with the following property. Assume that there exists a connected component $\Gamma_\de$ of $\Sigma_\de$ such that 
\begin{equation}\label{era da citare}
0 \leq \nu_q\cdot e_1 \leq \frac{1}{8},
\end{equation}
for any $q \in \partial \Gamma_\de$. Then $\Sigma_\de$ is connected.
\end{lemma}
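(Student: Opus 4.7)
My plan is to use the transversality condition on $\partial \Gamma_\delta$ to attach to $\Gamma_\delta$ an outward collar of $\delta$-geodesics that terminates on $\partial \Sigma$; calling the resulting set $W$, the key point is that $W$ will be simultaneously open and closed in $\Sigma$, so that the connectedness of $\Sigma$ forces $W = \Sigma$, and a distance computation then gives $\Sigma_\delta = \Gamma_\delta$.

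More concretely, I would first fix $\delta > 0$ depending only on $\rho$ such that (i) $\delta \leq \delta_0(\rho)$ from Proposition \ref{fejahyp2}; (ii) the combined constant arising from Propositions \ref{fejahyp2} and \ref{STP1} yields a perturbation of $\nu \cdot e_1$ of at most $1/8$ along any intrinsic $\delta$-geodesic in $S$; and (iii) $\delta < \rho \sqrt{1 - 1/16}$, the collar width appearing in Lemma \ref{lemma connected} with $\mu = 1/4$. For each $q \in \partial \Gamma_\delta$ I then consider the outward geodesic $\gamma_q \colon [0,\delta] \to \Sigma$ starting at $q$, pointing away from $\Gamma_\delta$ and normal in $\Sigma$ to $\partial \Gamma_\delta$ at $q$; the touching ball condition plus the transversality $\nu_q \cdot e_1 \leq 1/8$ make $\gamma_q$ well defined and embedded, and it terminates on $\partial \Sigma$ at a point $p(q)$ because $d_\Sigma(q, \partial \Sigma) = \delta$.

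To propagate transversality along the collar, I use the identity $\nu \cdot e_1 = g(N, \omega)$ for the unit hyperbolic vector field $\omega_z = z_n e_1$, together with the two estimates $|N_{p} - \tau_q^{p}(N_q)|_{p} \leq C\delta$ (Proposition \ref{fejahyp2}) and $|\omega_{p} - \tau_q^{p}(\omega_q)|_{p} \leq C'\delta$ (Proposition \ref{STP1}), to obtain $|\nu_{\gamma_q(t)} \cdot e_1 - \nu_q \cdot e_1| \leq (C + C')\delta \leq 1/8$, so the transversality bound $\nu \cdot e_1 \leq 1/4$ is maintained throughout the collar. Then, letting $W := \Gamma_\delta \cup \bigcup_{q \in \partial \Gamma_\delta} \gamma_q([0,\delta]) \subset \Sigma$, the set $W$ is closed because $\partial \Gamma_\delta$ is compact and $(q,t) \mapsto \gamma_q(t)$ is continuous; it is open because at interior points of $\Gamma_\delta$ this is trivial, at points on $\gamma_q((0,\delta])$ the transversality bound and the touching-ball-induced normal tube structure guarantee that nearby points of $\Sigma$ lie on nearby collar rays, and at points of $\partial \Gamma_\delta$ the two descriptions are matched via the implicit function theorem. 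Connectedness of $\Sigma$ then yields $W = \Sigma$, and since each $\gamma_q(t)$ with $t > 0$ lies at intrinsic distance $\leq \delta - t < \delta$ from $\partial \Sigma$, one has $W \cap \Sigma_\delta = \Gamma_\delta$, hence $\Sigma_\delta = \Gamma_\delta$ is connected.

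The hardest step is the quantitative construction and control of the outward collar $\{\gamma_q\}_{q \in \partial \Gamma_\delta}$: I need to ensure that the geodesics do not self-intersect, remain inside $\Sigma$, and actually reach $\partial \Sigma$ in length exactly $\delta$. This is precisely where the touching ball condition, the transversality estimates of the preceding subsection, and the curvature control for sub-hyperplane projections (Proposition \ref{prop Luigi I}, applied to the hyperplane $\pi$ in the spirit of Lemma \ref{lemma connected}) combine to rule out any pathological folding or collapse of the collar, and is also what pins down how small $\delta$ must be in terms of $\rho$.
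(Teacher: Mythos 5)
Your strategy has a genuine gap at its central step: you assume that the outward collar $\{\gamma_q\}_{q\in\partial\Gamma_\delta}$ exhausts $\Sigma\setminus\Gamma_\delta$, i.e.\ that each ray reaches $\partial\Sigma$ in length exactly $\delta$ and that the rays foliate a full one-sided neighborhood so that $W$ is open. Neither claim is justified, and both amount to assuming the very fact that makes the lemma nontrivial, namely that every point of $\Sigma\setminus\Gamma_\delta$ lies at intrinsic distance at most $\delta$ from $\partial\Gamma_\delta$. A priori $\Sigma\setminus\Gamma_\delta$ could contain a long thin ``tongue'' attached to $\partial\Gamma_\delta$ which stays within distance $\delta$ of $\partial\Sigma$ (hence outside $\Sigma_\delta$) while extending intrinsically far from $\partial\Gamma_\delta$; in that case your ray $\gamma_q$ does not terminate on $\partial\Sigma$ at $t=\delta$, $W$ fails to be open at the uncovered end, and the open--closed argument collapses. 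There is also a regularity problem: $\partial\Gamma_\delta$ is a level set of the Lipschitz function $d_\Sigma(\cdot,\partial\Sigma)$ and need not be a $C^1$ hypersurface of $\Sigma$, so ``the geodesic normal to $\partial\Gamma_\delta$ at $q$'' is not well defined, and even where it is, the normal exponential map foliates a neighborhood only up to the focal and cut loci of $\partial\Gamma_\delta$, which you do not control.

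The missing ingredient is an extrinsic trapping argument, which is how the paper proceeds: since $0\le\nu_q\cdot e_1\le 1/8$ on $\partial\Gamma_\delta$ and these points lie within distance comparable to $\delta$ from $\pi$, the interior and exterior touching balls of radius $\rho$ at every $q\in\partial\Gamma_\delta$ intersect $\pi$ once $\delta$ is small in terms of $\rho$, so $\Sigma\setminus\Gamma_\delta$ is enclosed between $\pi$ and the union of these balls; this forces every $p\in\Sigma\setminus\Gamma_\delta$ --- in particular every $p\in\partial\Sigma$ --- to satisfy $d_\Sigma(p,q)\le\delta$ for some $q\in\partial\Gamma_\delta$. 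Your transversality propagation via propositions \ref{fejahyp2} and \ref{STP1} is then applied exactly as you describe, but now to all of $\partial\Sigma$, yielding $\nu_p\cdot e_1\le 1/4$ there, and one concludes by actually invoking lemma \ref{lemma connected} (which you cite only for the value of $t_0$ but never use): transversality on all of $\partial\Sigma$ gives the retraction of $\Sigma$ onto $\Sigma_t$ and hence the connectedness of $\Sigma_\delta$. In short, your collar construction should be replaced by the touching-ball containment of $\Sigma\setminus\Gamma_\delta$, and your open--closed endgame by the conclusion of lemma \ref{lemma connected}.
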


\begin{proof}
Let $\delta \leq \delta_0(\rho)$, where $\delta_0$ is the bound appearing in Proposition \ref{fejahyp2}. In view of \eqref{era da citare}, we can choose a smaller $\de$ (in terms of $\rho$) such that the interior and exterior touching balls at an arbitrary  $q$ in  $\partial\Gamma_\de$ intersect $\pi$, which implies that $ \Sigma \setminus \Gamma_\de$ is enclosed by $\pi$ and the set obtained as the union of all the exterior and interior touching balls to $S^\pi$ (recall that $\Sigma$ is a subset of the reflection $S^\pi$ of $S$ about $\pi$) of radius $\rho$ at the points on $\Gamma_\de$. Since $\delta$ is choosen small in terms of $\rho$, this implies that for any $p \in \Sigma \setminus \Gamma_\de$ there exists $q \in \partial \Gamma_\de$ such that $d_{\Sigma}(p,q) \leq \de$,
and from \eqref{bound on nu N+1} we have that
$$
|N_p - \tau_q^p(N_q)|_p \leq C \de\,,\mbox{ and } g_p(N_p,\tau_q^p(N_q))\geq \sqrt{1-C^2\delta^2}\,,
$$ 
where $C$ depends on $\rho$. Therefore 
$$
\nu_p \cdot e_1=g_p(N_p,\omega_p)\leq g_p(N_p-\tau_q^p(N_q),\omega_q)+g_p(\tau_q^p(N_q),\omega_p) \leq C\delta+g_p(\tau_q^p(N_q),\omega_p)
$$
and by using 
$$
g_p(\tau_q^p(N_q),\omega_p)=g_q(N_q,\tau_p^q(\omega_p))
$$
and triangular inequality, we get 
$$
\nu_p \cdot e_1\leq C\delta+g_q(N_q,\omega_q)+g_q(N_q,\tau_p^q(\omega_p)-\omega_q)\leq C\delta+\nu_q\cdot e_1+|\tau_p^q(\omega_p)-\omega_q|_q\,. 
$$
In particular, the last bound holds for every $p \in \partial \Sigma$. From proposition \ref{STP1} and by choosing $\delta$ small enough in terms of $\rho$, we obtain  
\begin{equation*}
\nu_p \cdot e_1\leq \frac14
\end{equation*}
and lemma \ref{lemma connected} implies the statement. 
\end{proof}

\begin{lemma} \label{lemma connected III}
There exists $\delta>0$ depending only on $\rho$ with the following property. Assume that there exists a connected component $\Gamma_\delta$ of $\Sigma_{\delta}$ such that for any $q\in \partial \Gamma_\delta$ there exists $\hat q \in \hat \Sigma$ such that 
$$
d(q,\hat q)+|N_q-\tau_{\hat q}^q(N_{\hat q})|_q \leq \delta\,,
$$
then 
\begin{equation}  \label{bellachegira}
0\leq \nu_z\cdot e_1\leq \frac14
\end{equation}
for any $z\in \partial \Sigma$ and $\Sigma_\delta$ is connected. 
\end{lemma}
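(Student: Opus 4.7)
The proof falls naturally into three steps: we first derive a quantitative bound on $\nu_q\cdot e_1$ for $q\in \partial \Gamma_\delta$ via a reflection trick that couples the hypothesis with a second normal estimate obtained from Proposition \ref{fejahyp2}; we then apply Lemma \ref{lemma connected II} to conclude that $\Sigma_\delta$ is connected, so $\Gamma_\delta=\Sigma_\delta$; and finally we propagate the bound one step further to $\partial \Sigma$.

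For the first step, fix $q\in \partial \Gamma_\delta$. Since $\partial \Sigma\subset \pi$ and $d_\Sigma(q,\partial \Sigma)=\delta$, we have $d(q,\pi)\leq \delta$, so the hyperbolic reflection $q^\pi$ of $q$ about $\pi$ lies in $S_+\subset S$ and satisfies $d(q,q^\pi)\leq 2\delta$. Letting $\hat q\in \hat \Sigma$ be the point given by the hypothesis, we obtain $d(q^\pi,\hat q)\leq 3\delta$. For $\delta$ small enough (in terms of $\rho$), Lemma \ref{lemma bound on d Ga} ensures that $\hat q$ lies in a graph chart around $q^\pi$, so $d_S(q^\pi,\hat q)\leq C\delta$, and Proposition \ref{fejahyp2} yields
$$
|N_{q^\pi}-\tau_{\hat q}^{q^\pi}N_{\hat q}|_{q^\pi}\leq C_1\delta.
$$
Combining this with the hypothesis $|N_q-\tau_{\hat q}^q N_{\hat q}|_q\leq \delta$ and using Proposition \ref{STP1} to control the parallel transport of $\omega$, the standard triangle-inequality manipulation gives
$$
\nu_q\cdot e_1\leq \nu_{\hat q}\cdot e_1+C_2\delta \qquad \text{and} \qquad \nu_{q^\pi}\cdot e_1\leq \nu_{\hat q}\cdot e_1+C_3\delta,
$$
together with the corresponding reversed inequalities. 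Since the reflection $R_\pi$ is an isometry of $\mathbb H^n$ reversing the $e_1$-component, $\nu_{q^\pi}\cdot e_1=-\nu_q\cdot e_1$. Summing the two estimates and their reverses then forces $|\nu_{\hat q}\cdot e_1|\leq C\delta$, and hence $|\nu_q\cdot e_1|\leq C'\delta$ on $\partial \Gamma_\delta$.

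Choosing $\delta$ sufficiently small in terms of $\rho$ we ensure $|\nu_q\cdot e_1|\leq 1/8$ on $\partial \Gamma_\delta$; combined with the lower bound $\nu_q\cdot e_1\geq 0$, which is the standard geometric consequence of $\pi$ being the critical hyperplane, this verifies the hypothesis of Lemma \ref{lemma connected II}. Hence $\Sigma_\delta$ is connected, $\Gamma_\delta=\Sigma_\delta$ and $\partial \Gamma_\delta=\partial \Sigma_\delta$. For the final step, fix $z\in \partial \Sigma$ and take $q\in \partial \Sigma_\delta$ with $d_\Sigma(z,q)=\delta$; the same parallel-transport argument based on Proposition \ref{fejahyp2} and Proposition \ref{STP1} then gives
$$
\nu_z\cdot e_1\leq \nu_q\cdot e_1+C_4\delta\leq (C'+C_4)\delta\leq \frac{1}{4}
$$
for $\delta$ small enough, while $\nu_z\cdot e_1\geq 0$ follows again from $\pi$ being critical.

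The main obstacle is the bound $|\nu_{\hat q}\cdot e_1|\leq C\delta$ at the end of the first step: the hypothesis alone only controls the difference $\nu_q\cdot e_1-\nu_{\hat q}\cdot e_1$, while $\nu_{\hat q}\cdot e_1$ is a priori unconstrained, so no useful smallness of $\nu_q\cdot e_1$ would follow. The reflection trick bypasses this by producing a second point $q^\pi\in S$ close to $\hat q$; the exact sign-flip $\nu_{q^\pi}\cdot e_1=-\nu_q\cdot e_1$ then converts the pointwise $\delta$-closeness between $\hat q$ and its two neighbors $q$ and $q^\pi$ into the required smallness of $\nu_q\cdot e_1$. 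The only technical issue to watch is that Proposition \ref{fejahyp2} requires the intrinsic distance $d_S(q^\pi,\hat q)$, not the ambient distance, but this is handled by Lemma \ref{lemma bound on d Ga} as soon as $\delta$ is small enough that $\hat q$ sits in an Euclidean graph chart of $S$ centered at $q^\pi$, a condition which only depends on the touching ball radius $\rho$.
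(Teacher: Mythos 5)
Your proposal is correct and follows essentially the same route as the paper: the reflection trick producing $q^\pi$, the exact sign-flip of the $e_1$-component of the normal under reflection about $\pi$, Proposition \ref{fejahyp2} for the closeness of $N_{q^\pi}$ and $N_{\hat q}$, the parallel-transport estimates to compare the $\omega$-components, and then Lemma \ref{lemma connected II} to get connectedness before propagating the bound to $\partial\Sigma$. Your last step (transporting the smallness from $q\in\partial\Sigma_\delta$ to $z\in\partial\Sigma$ directly) is a slightly streamlined but equivalent version of the paper's second reflection computation at $z$.
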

\begin{proof} 
Let $q\in \partial \Gamma_\delta$. 
By construction  $\nu_q\cdot e_1\geq 0$. Let $q^\pi$ be the reflection of $q$ with respect to $\pi$.  By our assumptions we have
$$
d(q^\pi,\hat q) \leq d(q^\pi,q) + d(q,\hat q) \leq 3 \delta \,.
$$
We can choose $\delta$ small enough in terms of $\rho$ and find $C=C(\rho)$ such that: $d_S(q^\pi,\hat q) \leq C \delta $ (as follows from \eqref{bound on d Ga above}), $q^\pi\in \mathcal{U}_{\rho_1}(\hat q)$, and so that 
$$
g_{\hat q}(N_{\hat q}, \tau_{q^\pi}^{\hat q}(N_{q^\pi}))\geq  \sqrt{1-C^2\delta^2}\\ \quad \textmd{ and } \quad \ \ |N_{\hat q} - \tau_{q^\pi}^{\hat q}(N_{q^\pi})|_{\hat q} \leq C \delta 
$$
(see \eqref{bound on nu N+1}). Since $N_{q^\pi}= (-(N_q)_1, (N_q)_2, \ldots, (N_q)_n)$ and $q$ and $q^\pi$ are symmetric about $\pi$, we have that
$$
\nu_q\cdot e_1=g_q(N_q,\omega_q)=-g_q(\tau_{q^\pi}^{q}(N_{q^\pi}),\omega_q) \,,$$
and so
$$
2g_q(N_q,\omega_q)=g_q(N_q-\tau_{q^\pi}^{\hat q}(N_{q^\pi}),\omega_q)=g_q(N_q-\tau_{\hat q}^{ q}(N_{\hat q}),\omega_q)+g_q(\tau_{\hat q}^{q}(N_{\hat q})-\tau_{q^\pi}^{q}(N_{q^\pi}),\omega_q) \,.
$$
This implies that 
$$
0\leq 2g_q(N_q,\omega_q)\leq |N_q-\tau_{\hat q}^{q}(N_{\hat q})|_q+|\tau_{\hat q}^{q}(N_{\hat q})-\tau_{q^\pi}^{q}(N_{q^\pi})|_q \,,
$$
and lemma \ref{STP2} together with our assumptions implies 
\begin{equation} \label{treno}
0\leq 2\nu_q \cdot e_1\leq\frac18\,.
\end{equation}
From Lemma \ref{lemma connected II} we obtain that $\Sigma_\delta$ is connected.

%
%

Now fix $z \in \partial \Sigma$ and let $q$ be such that $d_{\Sigma} (q,z) = \delta$ (so that  $z \in \mathcal{U}_\rho(q)$). Since $q$ and $q^\pi$ are symmetric about $\pi$, then we have that
$$
g_z(\tau_q^z(N_q),\omega_z) = - g_z(\tau_{q^\pi}^z(N_{q^\pi}),\omega_z) \,,
$$ 
and hence
$$
2 g_z(\tau_q^z(N_q),\omega_z) = g_z(\tau_q^z(N_q) - \tau_{q^\pi}^z(N_{q^\pi}),\omega_z) \,.
$$
We write
$$
\begin{aligned}
2 g_z(N_z,\omega_z) &  = 2g_z(\tau_{q}^z(N_q),\omega_z) + 2g_z(N_z - \tau_{q}^z(N_q),\omega_z)  \\ 
& = g_z(\tau_{q}^z(N_q) -\tau_{q^\pi}^z(N_{q^\pi}),\omega_z) +  2 g_z(N_z - \tau_{q}^z(N_q),\omega_z) \\
& =  g_z(\tau_{q}^z(N_q) - \tau_{\hat q}^z(N_{\hat q}), \omega_z) + g_z(\tau_{\hat q}^z(N_{\hat q}) ,\omega_z) - g_z(\tau_{q^\pi}^z(N_{q^\pi}),\omega_z) +  2 g_z(N_z - \tau_{q}^z(N_q),\omega_z) 
\,.
\end{aligned}
$$
By Cauchy-Schwarz and triangle inequalities we have 
$$
\begin{aligned}
|2 g_z(N_z,\omega_z)| & \leq |\tau_{q}^z(N_q) - \tau_{\hat q}^z(N_{\hat q})|_z + |\tau_{\hat q}^z(N_{\hat q}) - \tau_{q^\pi}^z(N_{q^\pi})|_z +  2 |N_z -\tau_{q}^z(N_q)|_z \\
&  \leq  |\tau_{q}^z(N_q) - \tau_{\hat q}^z(N_{\hat q})|_z + |\tau_{\hat q}^z(N_{\hat q}) - \hat N_z|_z + |\hat N_z- \tau_{q^\pi}^z(N_{q^\pi})|_z +  2 |N_z - \tau_{q}^z(N_q)|_z
\,,
\end{aligned}
$$
where $N_z$ and $\hat N_z$ are the normal vectors to $\Sigma$ and $\hat \Sigma$ at $z$, respectively.  The first term can be bounded in terms of $\delta$ by lemma \ref{STP2}. All the remaining terms on the right hand side can be estimated in terms of $\delta$ by using  proposition \ref{fejahyp2}.
This implies that 
$$
|2 g_z(N_z,\omega_z)| \leq C \delta \,.
$$
By choosing $\delta$ small enough compared to $C$ (and hence compared to $\rho$) we have that 
\begin{equation*}
0 \leq \nu_z \cdot e_1 \leq 1/4,
\end{equation*}
i.e. $\Sigma$ intersects $\pi$ transversally. 
\end{proof}

The following lemma will be used several times in the proof of theorem \ref{thm approx symmetry 1 direction}.


\begin{lemma} \label{lem_p_pstar}
Assume that $e_n \in \Sigma$ with $\nu_{e_n}=e_n$ and that there exist two local parametrizations $u,\hat u: B_{r} \to \RR$ of $\Sigma$ and $\hat \Sigma$, respectively, with $0<r\leq \rho_1$ and such that $u -\hat u  \geq 0$, where $\rho_1$ is given by \eqref{rho1}.

Let $p_1=(x_1,u(x_1))$ and $\hat p_1^*=(x_1,\hat u(x_1))$, with $x_1\in \partial B_{r/4}$, and denote by $\gamma$ the geodesic path starting from $p_1$ and tangent to $\nu_{p_1}$ at $p_1$. Assume that 
\begin{equation} \label{conan}
d(p_1,\hat p_1^*) + |\nu_{p_1} - \nu_{\hat p_1^*} | \leq \theta \,. 
\end{equation}
for some $\theta \in [0, 1/2]$. There exists $\bar r$ depending only on $\rho$ such that if $r \leq \bar r$ we have that $\gamma \cap \hat \Sigma \neq \emptyset$ and, if we denote by $\hat p_1$ the first intersection point between $\gamma$ and $\hat \Sigma$, then
\begin{equation*}
d(p_1,\hat p_1) + |N_{p_1} - \tau_{\hat p_1}^{p_1}(N_{\hat p_1})|_{p_1} \leq C \theta \,,
\end{equation*}
where $C$ is a constant depending only on $n$ and $\rho$, and provided that $C\theta < 1/2$.
\end{lemma}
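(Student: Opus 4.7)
The plan is to regard $\gamma$ as a small perturbation of the normal geodesic $\hat\gamma(t):=\exp_{\hat p_1^*}(t\,N_{\hat p_1^*})$ to $\hat\Sigma$ at $\hat p_1^*$, which meets $\hat\Sigma$ transversally at $t=0$. By Lemma \ref{fejahyp}, the parametrizations $u$ and $\hat u$ satisfy the uniform graph bounds \eqref{bounds on u} on $B_r$, so $p_1$ and $\hat p_1^*$ lie in a fixed hyperbolic ball about $e_n$ whose size depends only on $\rho$. Converting \eqref{conan} to its hyperbolic form via Lemma \ref{stima per d} and Proposition \ref{STP2} gives
$$
d(p_1,\hat p_1^*)+\bigl|N_{p_1}-\tau_{\hat p_1^*}^{p_1}(N_{\hat p_1^*})\bigr|_{p_1}\leq C\theta,
$$
while standard continuous dependence for the geodesic ODE --- with Christoffel symbols uniformly bounded on that compact region --- yields
$$
d\bigl(\gamma(t),\hat\gamma(t)\bigr)+\bigl|\dot\gamma(t)-\tau_{\hat\gamma(t)}^{\gamma(t)}\dot{\hat\gamma}(t)\bigr|_{\gamma(t)}\leq C\theta,\quad |t|\leq 1,
$$
with $C=C(\rho)$.

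To locate the intersection $\hat p_1$, I form the scalar function $\Phi(t):=\gamma(t)_n-\hat u(\overline{\gamma(t)})$, defined as long as $\overline{\gamma(t)}\in B_r$. The bound $d(p_1,\hat p_1^*)\leq\theta$ yields $|\Phi(0)|\leq C\theta$, while the gradient bound in \eqref{bounds on u} together with $\dot\gamma(0)=\nu_{p_1}$ being $O(\theta)$-close to $\nu_{\hat p_1^*}$ (itself nearly vertical, since $\hat u$ is nearly constant) ensure that $|\Phi'(0)|$ is bounded below by a positive constant depending only on $\rho$. Taking $\bar r$ small enough in terms of $\rho$ keeps $\Phi'$ sign-definite and $\overline{\gamma(t)}\in B_r$ on $|t|\leq C\theta$, so $\Phi$ has a first zero $t^*$ with $|t^*|\leq C\theta$. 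Setting $\hat p_1:=\gamma(t^*)\in\hat\Sigma$ gives $d(p_1,\hat p_1)=|t^*|\leq C\theta$ and, by the triangle inequality, $d(\hat p_1,\hat p_1^*)\leq C\theta$.

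For the normal comparison, Lemma \ref{lemma bound on d Ga} upgrades $d(\hat p_1,\hat p_1^*)\leq C\theta$ to the intrinsic bound $d_S(\hat p_1,\hat p_1^*)\leq C\theta$, so that Proposition \ref{fejahyp2} yields $\bigl|N_{\hat p_1}-\tau_{\hat p_1^*}^{\hat p_1}(N_{\hat p_1^*})\bigr|_{\hat p_1}\leq C\theta$. Combined with the hyperbolic form of \eqref{conan} and Proposition \ref{STP2}, the triangle inequality
$$
\bigl|N_{p_1}-\tau_{\hat p_1}^{p_1}(N_{\hat p_1})\bigr|_{p_1}\leq \bigl|N_{p_1}-\tau_{\hat p_1^*}^{p_1}(N_{\hat p_1^*})\bigr|_{p_1}+\bigl|\tau_{\hat p_1^*}^{p_1}(N_{\hat p_1^*})-\tau_{\hat p_1}^{p_1}(N_{\hat p_1})\bigr|_{p_1}\leq C\theta
$$
closes the argument. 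I expect the hardest step to be the quantitative transversality, namely the $\rho$-uniform lower bound on $|\Phi'(0)|$ and the associated control of $|t^*|$; here the uniform touching-ball hypothesis enters crucially through Lemma \ref{fejahyp}, giving a $\rho$-uniform $C^1$ control of $\hat u$ and hence a non-degenerate crossing angle of $\hat\gamma$ with $\hat\Sigma$.
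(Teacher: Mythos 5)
Your proposal is correct in substance but establishes the key geometric step by a genuinely different mechanism than the paper. Where you locate $\hat p_1$ by an implicit-function/transversality argument --- introducing $\Phi(t)=\gamma(t)_n-\hat u(\overline{\gamma(t)})$, noting $|\Phi(0)|=|u(x_1)-\hat u(x_1)|\leq C\theta$, and extracting a first zero at $|t^*|\leq C\theta$ from a $\rho$-uniform lower bound on $|\Phi'|$ --- the paper instead argues synthetically: it uses the touching ball condition at $\hat p_1^*$ to guarantee that $\gamma$ meets $\hat\Sigma$, and then bounds $d(p_1,\hat p_1)$ and $d(\hat p_1^*,\hat p_1)$ by the hyperbolic sine rule and cosine law in a geodesic right triangle with vertex $p_1$, exploiting that the angle between $\gamma$ and the vertical through $p_1$ and $\hat p_1^*$ has sine at most $1/4$. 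The two routes draw on the same quantitative input (the $C^1$ graph bounds of lemma \ref{fejahyp} forced by the touching ball condition, which make the crossing non-degenerate), and your concluding normal comparison --- same-vertical-line identity for the first term, then proposition \ref{fejahyp2} plus proposition \ref{STP2} for the transport discrepancy --- is essentially identical to the paper's. Your analytic route buys explicitness about where uniformity enters, at the cost of one point that needs care and that you correctly flag: you must check that $\overline{\gamma(t)}$ remains in $B_r$ up to time $t^*$; this works because the horizontal speed of $\gamma$ is controlled by $|\nabla u(x_1)|\lesssim r/\rho_1$, so the horizontal drift over time $C\theta$ is of order $r\theta$, which is small compared to the remaining margin $3r/4$ once $C\theta<1/2$ with $C=C(n,\rho)$ large enough. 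Finally, the preliminary comparison of $\gamma$ with the normal geodesic $\hat\gamma$ at $\hat p_1^*$ via continuous dependence for the geodesic ODE is harmless but redundant: your $\Phi$-argument never uses $\hat\gamma$.
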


\begin{proof}
We first notice that, by choosing $r$ small enough in terms of $\rho$, from Lemma \ref{fejahyp} we have that $|\nu_{p_1} - e_n | \leq  1/4$. By using the touching ball condition for $\hat \Sigma$ at $\hat{p}_1^*$, a simple geometrical argument shows that the geodesic passing through $p_1$ and tangent to $\nu_{p_1}$ at $p_1$ intersects $\hat \Sigma$, so that $\hat p_1$ is well defined.   

As shown in figure \ref{figLuigi}, 
we estimate the distance between $p_1$ and $\hat p_1$ as follows. Let $q$ be the unique point having distance $2\ep$ from $p_1$ and lying on the geodesic path containing $p_1$ and $\hat p_1^*$. Let $T$ be the geodesic right-angle triangle having vertices $p_1$ and $q$ and hypotenuse contained in the geodesic passing  through $p_1$ and $\hat p_1$. Since the angle $\alpha$ at the vertex $p_1$ is such that $|\sin \al| \leq 1/4$, then from the sine rule for hyperbolic triangles we have that 
\begin{equation} \label{conad2}
d(p_1, \hat p_1) \leq C \theta \,.
\end{equation}
Moreover, the cosine law formula in hyperbolic space gives that
\begin{equation} \label{conad3}
d(\hat p_1^*, \hat p_1) \leq C \theta 
\end{equation}
for some constant $C$, and from \eqref{bound on nu N+1} we obtain that 
\begin{equation} \label{coop1}
|N_{p_1} - \tau_{\hat p_1}^{p_1}(N_{\hat p_1})|_{p_1} \leq |N_{p_1} - \tau_{\hat p_1^*}^{p_1}(N_{\hat p_1^*})|_{p_1}+ |\tau_{\hat p_1^*}^{p_1}(N_{\hat p_1^*}) - \tau_{\hat p_1}^{p_1}(N_{\hat p_1})|_{p_1} \,.
\end{equation}
Since $p_1$ and $\hat p_1^*$ lie on the same vertical line, we have that
\begin{equation} \label{coop2}
|N_{p_1} - \tau_{\hat p_1^*}^{p_1}(N_{\hat p_1^*})|_{p_1} = |\nu_{p_1} - \nu_{\hat p_1^*}| \leq C \theta \,,
\end{equation}
where the last inequality follows from \eqref{conad}. Moreover, from proposition \ref{STP2} we have
$$
\begin{aligned}
|\tau_{\hat p_1^*}^{p_1}(N_{\hat p_1^*}) - \tau_{\hat p_1}^{p_1}(N_{\hat p_1})|_{p_1} & \leq C \Big(d(p_1,\hat p_1^*) + d(p_1,\hat p_1) + d(\hat p_1, \hat p_1^*) + |N_{\hat p_1} - \tau_{\hat p_1^*}^{\hat p_1} (N_{\hat p_1^*}) |_{\hat p_1} \Big) \\
& \leq C \theta \,,
\end{aligned}
$$
where the last inequality follows from \eqref{conad},\eqref{conad2},\eqref{conad3} and \eqref{bound on nu N+1}. This last inequality, \eqref{coop1} and \eqref{coop2} imply that
$$
|N_{p_1} - \tau_{\hat p_1}^{p_1}(N_{\hat p_1})|_{p_1} \leq C \theta \,,
$$
and therefore from \eqref{conad2} we conclude.
\end{proof}

\begin{figure}[h]
\centering
\begin{tikzpicture} [scale=2]
\draw (2,2).. controls(0,0.25)..(-2,2) node[left] {\footnotesize $\Sigma$};
\draw  (2,1.2).. controls(0,0.27).. (-2,1.2) node[left] {\footnotesize $\hat \Sigma$};
\draw (-2.3,0.2) -- (2,0.2);
\node at (1/7,0.8) {\footnotesize $e_n$};
\node at (2,0.1) {\footnotesize $\pi_{\infty}$};
\draw[->] (0,0.69)--(0,1);
\filldraw [black] (0,0.69) circle (0.5 pt);
\draw [domain=0:70] plot ({0.1+1.7*cos(\x)}, {0.2+1.7*sin(\x)});
\filldraw [black] (1.59,1.01) circle (0.5 pt);
\filldraw [black] (1.31,1.40) circle (0.5 pt)node[left] {\footnotesize $ p_1$};
\draw[-] (1.31,1.40)--(1.31,0.5);
\filldraw [black] (1.31,0.89) circle (0.5 pt);
\filldraw [black] (1.31,0.5) circle (0.5 pt)node[left] {\footnotesize $ q$};
\node at (1.8,0.95) {\footnotesize $\hat p_1$};
\node at (1.2,0.7) {\footnotesize $\hat p_1^*$};
\filldraw [black] (1.24,1.70)node[left] {\footnotesize $ \gamma$};
\end{tikzpicture}
\caption{}\label{figLuigi}
\end{figure}

\subsection{Proof of the first part of theorem \ref{main}. }Now we can focus on the proof of the first part of theorem \ref{thm approx symmetry 1 direction}, showing that there exist constants $\ep$ and $C$, depending only on $n$, $\rho$ and $|S|_g$, such that if
$$
{\rm osc}( H) \leq \ep,
$$
then for any $p$ in $\Sigma$ there exists $\hat p$ in $\hat\Sigma$ satisfying
\begin{equation}\label{bound on dist}
d(p,\hat p) +  |N_p-\tau_{\hat p}^p (N_{\hat p})|_p \leq C\, \oscH  \,. 
\end{equation}

We will have to choose a number $\delta>0$ sufficiently small in terms of $\rho$, $n$ and $|S|_g$ and subdivide the proof of the first part of the statement in four cases depending on the whether the distances of $p_0$ and $p$ from $\partial \Sigma$ are greater or less than $\delta$. A first requirement on $\delta$ is that it satisfies the assumptions of lemmas \ref{lemma connected II} and \ref{lemma connected III}; other restrictions on the value of $\delta$ will be done in the development of the proof.

\subsubsection{Case 1. $d_{\Sigma}(p_0,\pa \Sigma) > \de$ and $d_{\Sigma}(p,\pa \Sigma) \geq \de$} \label{subsec case1}

In this first case we assume that $p_0$ and $p$ are interior points of $\Sigma$, which are far from $\pa \Sigma$ more than $\de$.
We first assume that $p_0$ and $p$ are in the same connected component of $\Sigma_\de$; then, lemma \ref{lemma connected II} will be used in order to show that $\Sigma_\de$ is in fact connected.

From lemma \ref{lemma bound on d Ga} we can choose $\alpha \in (0, \frac{1}{2} \min(1,\rho_1^{-1}))$ such that $\alpha C \rho_1 \leq \delta/4$, where $C$ is the constant appearing in \eqref{disk}, and we set 
\begin{equation} \label{rpippo}
r_0=\min(\bar r,\alpha \rho_1 )\,,
\end{equation}
where $\bar r$ is given by lemma \ref{lem_p_pstar}. Accordingly to this definition of $r_0$, from \eqref{disk} we have that if $p_i\in \Sigma_\delta$ then 
$\mathcal U_{r_0}(p_i)\subset \Sigma$. 
\begin{lemma}\label{catenadipalle}
Let $\ep_0 \in [0,1/2]$, $p_0$ and $p$ be in a connected component of $\Sigma_\delta$ and $r_i=(1-\ep_0^2)^ir_0$.  There exist an
integer $J\leq J_\de$, where 
\begin{equation}\label{N_delta_hyp}
J_\de:=\max\left(4,\frac{2^{n-1} |S|_g}{ \de^{n-1}} \right) \,,
\end{equation}
and a sequence of points $\{p_1,\dots,p_J\}$ in $\Sigma_{\delta/2}$ such that
\begin{eqnarray*}
&& p_0,p \in \bigcup_{i=0}^J \overline{\mathcal U}_{r_i/4}(p_i)\,,\\
&& \mathcal U_{r_0}(p_i)\subseteq \Sigma, \quad i = 0,\ldots, J\,,\\
&& p_{i+1} \in\overline{\mathcal U}_{r_i/4}(p_i), \quad i=0,\ldots,J-1\,.
\end{eqnarray*}
\end{lemma}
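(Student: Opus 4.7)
The plan is to walk along a path in the common connected component of $\Sigma_\delta$, placing each $p_{k+1}$ as the exit point of that path from the previous graph neighbourhood, and then to bound the chain length by packing geodesic disks into $S$.

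Since $p_0$ and $p$ lie in the same connected component $C$ of $\Sigma_\delta$, and $C$ is locally path-connected, I pick a rectifiable curve $\gamma:[0,L]\to C$ joining them, parametrised by $d_\Sigma$-arc length; by deleting loops I may assume $\gamma$ is simple. Set $p_0=\gamma(0)$, $t_0=0$, and, while $p\notin\overline{\mathcal U}_{r_k/4}(p_k)$, define inductively
$$
t_{k+1}=\sup\bigl\{t\in[t_k,L]:\gamma([t_k,t])\subset\overline{\mathcal U}_{r_k/4}(p_k)\bigr\},\qquad p_{k+1}=\gamma(t_{k+1}),
$$
stopping at the first index $J$ for which $p\in\overline{\mathcal U}_{r_J/4}(p_J)$. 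The chain condition $p_{k+1}\in\overline{\mathcal U}_{r_k/4}(p_k)$ and the fact that $p_0,p\in\bigcup_{i=0}^J\overline{\mathcal U}_{r_i/4}(p_i)$ are built into this construction, and every $p_i$ lies on $\gamma\subset\Sigma_\delta\subset\Sigma_{\delta/2}$. For the inclusion $\mathcal U_{r_0}(p_i)\subset\Sigma$: the choice $r_0\leq\alpha\rho_1$ from \eqref{rpippo} together with \eqref{disk} gives $d_S(p_i,q)\leq\alpha C\rho_1\leq\delta/4$ for every $q\in\mathcal U_{r_0}(p_i)$; combined with $p_i\in\Sigma_\delta$ this forces $d_S(q,\partial\Sigma)\geq 3\delta/4>0$, hence $q\in\Sigma$.

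The main obstacle is bounding $J$. By construction the Euclidean graph distance satisfies $|p_k-p_{k+1}|=r_k/4$ for $k<J$, and \eqref{bound on d Ga above} upgrades this to a two-sided intrinsic bound $c_1 r_k\leq d_\Sigma(p_k,p_{k+1})\leq c_2 r_k$ with $c_1,c_2$ depending only on $\rho$, while $r_k\geq(3/4)^kr_0$ since $\ep_0\leq1/2$. To convert this into a bound polynomial in $\delta^{-1}$, I plan to extract a maximal sub-sequence $i_1<\cdots<i_M$ of indices with $d_\Sigma(p_{i_\alpha},p_{i_\beta})\geq\delta/2$ for $\alpha\neq\beta$. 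Since each $p_{i_\alpha}\in\Sigma_{\delta/2}$, the geodesic disks $\mathcal B_{\delta/4}(p_{i_\alpha})$ lie in $S$ and are pairwise disjoint, so \eqref{geodesic_disk} yields
$$
M\cdot c(\delta/4)^{n-1}\leq |S|_g, \qquad\text{hence}\qquad M\leq\tfrac{4^{n-1}|S|_g}{c\,\delta^{n-1}}.
$$
The remaining step, which is the main technical point, is to bound the number of consecutive non-selected indices clustered inside any $d_\Sigma$-ball of radius $\delta/2$ centred at a selected $p_{i_\alpha}$ by an $n$- and $\rho$-dependent constant: the monotonicity of $\gamma$ combined with the step-size lower bound $d_\Sigma(p_k,p_{k+1})\geq c_1(3/4)^kr_0$ and a geometric-series truncation confines this count since the arc length along $\gamma$ inside such a ball is at most $\delta$. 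Combining these two estimates gives $J\leq C_n M\leq J_\delta$ after absorbing the dimensional constants into the factor $2^{n-1}$, while the entry $4$ in the definition of $J_\delta$ takes care of the degenerate regime in which $p\in\overline{\mathcal U}_{r_0/4}(p_0)$ already. The crux is precisely the bookkeeping that reconciles the exponentially decaying graph scale $r_i$ with the fixed intrinsic scale $\delta$ in the disjoint packing.
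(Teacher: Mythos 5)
The construction of the chain itself (greedy exit points along a path, the inclusion $\mathcal U_{r_0}(p_i)\subseteq\Sigma$ via \eqref{disk} and the choice $\alpha C\rho_1\leq\delta/4$) is fine and matches the spirit of the paper. The genuine gap is in the bound on $J$, which you yourself flag as ``the main technical point'' and then only sketch — and the sketch does not work. You start from an \emph{arbitrary} rectifiable curve $\gamma$ in the connected component of $\Sigma_\delta$, for which there is no a priori length bound; in particular the assertion that ``the arc length along $\gamma$ inside such a ball is at most $\delta$'' is false: a simple rectifiable curve can have arbitrarily large length inside a geodesic ball of radius $\delta/2$. Consequently the greedy construction can produce arbitrarily many chain points clustered in a single $\delta/2$-ball (consecutive points are $\gtrsim r_k$ apart, but non-consecutive ones need not be separated at all), and your maximal $\delta/2$-separated subsequence controls only $M$, not the number of indices between selected ones. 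Note also that the step-size lower bound $d_\Sigma(p_k,p_{k+1})\geq c_1(1-\ep_0^2)^k r_0$ sums to a \emph{convergent} geometric series, so it cannot by itself force termination of the process, let alone a count of order $\delta^{1-n}$.

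The missing ingredient is exactly what the paper supplies through proposition \ref{prop harnack chain} (the appendix result, modelled on \cite[Lemma 3.2]{JEMS}): one first replaces the arbitrary path by a chain of at most $J_\delta$ \emph{pairwise disjoint} tangent geodesic balls of radius $\delta/2$ centred on the path, whose number is bounded by the volume packing \eqref{geodesic_disk}, yielding a piecewise geodesic path in $\Sigma_{\delta/2}$ of total length at most $\delta J_\delta$. Only with this length control in hand can one place the points $p_i$ along the path and count them. Your packing of disjoint disks of radius $\delta/4$ around a separated net is the right estimate but applied at the wrong stage: it must be used to control the length of the connecting path \emph{before} the graph-scale chain is built, not to thin out an uncontrolled chain afterwards. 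As written, the proof of $J\leq J_\delta$ does not close.
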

\begin{proof}
For every $z$ in $\Sigma$ and $r\leq \rho_0$, the geodesic ball $\mathcal B_{r}(z)$ in $\Sigma$ satisfies 
$$ 
|\mathcal B_{r}(z)|_\Sigma\geq c r^{n-1}
$$
where $c$ is a constant depending only on $n$ (see formula \eqref{geodesic_disk}). A general result for Riemannian manifolds with boundary (see e.g. proposition \ref{prop harnack chain}) implies that there exists a piecewise geodesic path parametrized by arc length $\gamma\colon  [0,L]\to \Sigma_{\delta/2}$ connecting $p_0$ to $p$ and of length $L$ bounded by $\delta J_{\delta}$, where $J_\delta$ is given by \eqref{N_delta_hyp}.

%
We define $p_i=\gamma(r_i/4)$, for $i=1,\dots,J-1$  and $p_J=p$. Our choice of $r_0$ guarantees that $\mathcal U_{r_0}(p_i)\subseteq \Sigma$, for every $i = 0,\ldots, J$, and the other required properties are satisfied by construction.
\end{proof}

Since $p$ and $p_0$ are in a connected component of $\Sigma_\de$, there exist $\{p_1,\dots,p_J\}$ in the connected component of $\Sigma_{\delta/2}$ containing $p_0$ and a chain of subsets $\{\mathcal U_{r_0}(p_i)\}_{\{i=0,\ldots,J\}}$ of $\Sigma$ as in lemma \ref{catenadipalle}.
We notice that $\Sigma$ and  $\hat\Sigma$ are tangent at
$p_0$ and that in particular the two normal vectors to $\Sigma$ and $\hat \Sigma$ at $p_0$ coincide. Up to an isometry we can assume that $p_0=e_{n}$ and $\nu_{p_0}=e_{n}$, and then $\Sigma$ and $\hat\Sigma$ can be locally represented near $p_0$ as the graphs of two functions
$u_0,\, \hat u_0\colon B_{r_0} \subset \pi_\infty \to \RR$.
Lemma \ref{fejahyp} implies that $| \nabla u_0|, |\nabla \hat u_0 | \leq M$ in $B_{r_0}$, where $M$ is some constant which depends only on $r_0$, i.e. only on $\rho$.  Since $u_0$ and $\hat u_0$ satisfy \eqref{Hu} and $| \nabla u_0|, |\nabla \hat u_0 | \leq M$, then the difference $u_0-\hat u_0$ solves a second-order linear uniformly elliptic equation of the form
$$
\mathcal L(u_0-\hat u_0)(x)=H(x,u(x))-\hat H(x,\hat u(x))
$$
with ellipticity constants uniformly bounded by a constant depending only on $n$ and $\rho$. Since  $u_0(0)=\hat u_0(0)$ and $u_0 \geq \hat u_0$,
Harnack inequality (see Theorems 8.17 and 8.18 in \cite{GT}) yields  
$$
\sup_{B_{r_0/2}} (u_0-\hat u_0)\leq C\,{\rm osc} (H) \,,
$$
and from interior regularity estimates (see e.g. \cite[Theorem 8.32]{GT}) we obtain 
\begin{equation}\label{harnack step 1}
\|u_0- \hat u_0\|_{C^1(B_{r_0/4})} \leq C \,{\rm osc}( H),
\end{equation}
where $C$ depends only on $\rho$ and $n$. 

Since $p_1\in \partial\, \mathcal U_{r_0/4}(p_0)$, we can write $p_1=(x_1,u_0(x_1))$, with $x_1\in \partial B_{r_0/4}$, and define $\hat p_1^*$ and $\hat p_1$ as in lemma \ref{lem_p_pstar}. We notice that \eqref{harnack step 1} yields
\begin{equation} \label{conad}
d(p_1,\hat p_1^*) + |\nu_{p_1} - \nu_{\hat p_1^*} | \leq C\oscH \,, 
\end{equation}
so that \eqref{conan} in lemma \ref{lem_p_pstar} is fullfilled. From  lemma \ref{lem_p_pstar} we find
\begin{equation} \label{esselunga}
d(p_1,\hat p_1) + |N_{p_1} - \tau_{\hat p_1}^{p_1}(N_{\hat p_1})|_{p_1} \leq C \oscH \,.
\end{equation}


Now we apply an isometry in such a way that $p_1=e_n$ and $\nu_{p_1} = e_n$. We notice that by construction $\hat p_1$ becomes of the form $\hat p_1=te_n$, with $t\geq 1$ (notice that $t=1+d(p_1,\hat p_1)$).
From the Euclidean point of view, in this configuration $\mathcal{U}_{r_0}(p_1) \subset \Sigma$ satisfies an Euclidean touching ball condition of radius $\rho_1$. Moreover, being $\hat p_1=te_n$ with $t\geq 1$ also $\hat{\mathcal{U}}_{r_0}(p_1) \subset\hat \Sigma$ satisfies the Euclidean touching ball condition of radius $\rho_1$. 
Since in this configuration we have that 
$$
|\nu_{p_1}-\nu_{\hat p_1}|= |N_{p_1} - \tau_{\hat p_1}^{p_1}(N_{\hat p_1})|_{p_1} \,,
$$ 
from \eqref{esselunga} we find
$$
|\nu_{p_1}-\nu_{\hat p_1}| \leq C \oscH \,,
$$
where $C$ is a constant that depends only on $\rho$ and $n$. A suitable choice of $\ep$ in the statement of theorem \ref{main} (i.e. such that  $C\ep < 1$) guarantees that we can apply lemma \ref{lemma change normal I} (recall that $\oscH \leq \ep$) and we obtain that $\Sigma$ and $\hat \Sigma$ are locally graphs of two functions
$$
u_1,\hat u_1: B_{r_1} \to \RR^+ \,,
$$
such that $u_1(0)=p_1$ and $\hat u_1(0)= \hat p_1$ and where
$$
r_1=(1-\ep^2)r \,.
$$ 
Now, we can iterate the argument before. Indeed, since 
$$
0 \leq \inf_{B_{r_1/2}} (u_1-\hat u_1) \leq u_1(0)-\hat u_1(0) \leq C \oscH\,,
$$
by applying Harnack's inequality we obtain that
$$
\sup_{B_{r_1/2}} (u_1-\hat u_1)\leq C\,{\rm osc} (H)
$$
and from interior regularity estimates we find 
\begin{equation}\label{harnack step 2}
\|u_1- \hat u_1\|_{C^1(B_{r_1/4})} \leq C \,{\rm osc}( H),
\end{equation}
where $C$ depends only on $\rho$ and $n$. Hence, \eqref{harnack step 2} is the analogue of \eqref{harnack step 1}, and we can iterate the argument. The iteration goes on until we arrive at $p_N=p$ and obtain a point $\hat p_N \in \hat \Sigma$ such that
$$
d(p,\hat p_N) + |N_p-\tau_{\hat p_N}^{p}(N_{\hat p_N})|_p \leq C \oscH \,.
$$
In view of lemma \ref{lemma connected III} we have that $\Sigma_\delta$ is connected and the claim follows.

\medskip
\subsubsection{Case 2: $d_\Sigma(p_0,\pa \Sigma) \geq \de$ and $d_\Sigma(p,\pa \Sigma) < \de$} \label{subsec case2}
Here we extend the estimates found at case 1 to a point $p$ which is far less than $\delta$ from the boundary of $\Sigma$. Let $q\in \Sigma$ and $p_{min}\in \partial \Sigma$ be such that
$$
d_{\Sigma}(q,\partial \Sigma)=\delta \,, \quad d_{\Sigma}(p,q)+d_{\Sigma}(p,\partial \Sigma )=\delta\,, \quad  \mbox{and } d_{\Sigma}(p,p_{min})=d_{\Sigma}(p,\partial \Sigma)\,.
$$
From case 1 we have that there exists $\hat q$ in $\hat \Sigma$ such that
$$
d(q,\hat q)+|N_q-\tau_{\hat q}^q(N_{\hat q})|_q \leq C\,{\rm osc} (H)\,.
$$
Lemma \ref{lemma connected III} yields that 
\begin{equation} \label{bellachegira}
0 \leq g_z(N_z,\omega_z) \leq 1/4,
\end{equation}
for any $z \in \partial \Sigma$ and $\Sigma_\delta$ is connected.

For $r\leq \rho_1$, with $\rho_1$ given by \eqref{rho1}, we define $U_{r}(q)$ as the reflection of
$\mathcal U_r(q^\pi)\cap \{x_1\geq 0\}$ with respect to $\pi$ and $U'=\mathcal{U}_r(q^\pi)\cap \{x_1= 0\}$.  From proposition \ref{prop Luigi I}, $U'$ is a hypersurface of $\pi$ with a natural orientation and its principal curvatures $\kappa_i'$ satisfy
\begin{equation*}
\frac{1}{\sqrt{1-g_z(N_z,\omega_z)^2}}\kappa_1(z)\leq \kappa'_i(z)\leq \frac{1}{ \sqrt{1-g_z(N_z,\omega_z)^2} }\kappa_{n-1}(z) \,,
\end{equation*}
for every $z\in U'$ and $i=1,\dots,n-1$. From \eqref{bellachegira} and since $|\kappa_i(z)| \leq \rho^{-1}$ for any $z \in S$ (this follows from the touching sphere condition), we have
\begin{equation}\label{nu}
|\kappa'_i(z)|\leq \frac{2}{\rho} \,.
\end{equation}

Now we apply an isometry  $f\colon \mathbb H^n\to\mathbb H^n $ such that $f(q)=e_{n} $ and the normal vector to $f(S)$ at $f(q)$ is $e_{n}$ (i.e. $f_{*|q}(N_q)=e_n$). 

Let $U''$ be the Euclidean orthogonal projection of $f(U')$ onto $\pi_{\infty}$. Our goal is to estimate the curvatures of $U''$. It is clear that $f(\pi)$ is either a vertical hyperplane or a half-sphere intersecting $f(S)$. In the first case we immediately conclude since the curvatures of $U''$ vanish. 

Thus, we assume that $f(\pi)$ is a half-sphere. A straightforward computation yields that the radius of $f(\pi)$ is
$$
R=\frac{q_{n}(\Theta^2+1)}{2|\Theta|\,|a\Theta+q_{n} |}\,,
$$
where
$$
\Theta=-\frac{\sin \theta}{1+\cos \theta}\,,\quad \cos \theta =\nu_q\cdot e_{n}
$$
and $a$ is the Euclidean distance of $q$ from $\pi$.  It is easy to see that
$$
a\leq q_{n}\sinh(\delta)
$$
and so
$$
\frac{1}{R} \leq \frac{2|\Theta|\,(\sinh(\delta)|\Theta|+1 )}{\Theta^2+1} 
$$
which implies 
\begin{equation}\label{r}
\frac1R \leq 1 + 2 \sinh (\de) \,.
\end{equation}
We notice that the last estimate can be alternative found by noticing that an isometry that fixes $e_n$ maps a vertical hyperplane into a half sphere, where the radius can be estimated by using the distance of $e_n$ from the vertical hyperplane.

We still denote by $\nu'$ the Euclidean normal vector field to $f(U')$. We denote by $\kappa''_i$ the principal curvatures of $U''$ with respect to the Euclidean metric on $\pi_{\infty}$ and a chosen orientation.
Now, we want to find an upper bound on the curvatures of $U''$ which will allow us to use Carleson type estimates. Proposition \ref{prop Luigi II} and formula  \eqref{r} imply
$$
\begin{aligned}
|\kappa_i''(\bar \xi)| &  \leq \frac{1}{R}
 \left((\nu_\xi'\cdot e_{n})^2+\dfrac{\xi_{n}^2}{R^2}\right)^{-3/2} \left(2+
\max\{|\kappa_1'(f^{-1}(\xi))|,|\kappa_{n-1}'(f^{-1}(\xi))|\} \right) \\
 & \leq \frac{1 + 2 \sinh \de}{ |\nu_\xi'\cdot e_{n}|^3}\left(2+
\max\{|\kappa_1'(f^{-1}(\xi))|,|\kappa_{n-1}'(f^{-1}(\xi))|\} \right)
\end{aligned}
$$
for every $\xi=(\bar \xi,\xi_{n})$ in $f(U')$ and $i=1,\dots,n-2$. 
Then \eqref{nu} yields that 
\begin{equation}\label{sbrillano}
|\kappa_i''(\bar \xi)| \leq \frac{2(1+\rho)(1+2\sinh \de)}{ \rho|\nu_\xi'\cdot e_{n}|^3}\,.
 \end{equation}

Next we show 
\begin{equation} \label{nuxinuxip}
\nu_\xi'\cdot e_{n} \geq 1/2 \,.
\end{equation} 
We write
$$
\nu_\xi'\cdot e_{n} = \nu_\xi'\cdot (e_{n} - \nu_\xi) + \nu'_\xi \cdot \nu_\xi \,,
$$
where we still denote by $\nu$ the normal vector field to $f(S)$.  
Since $f_{*|q}(\nu_q)=e_n$, from lemma 2.1 in \cite{JEMS} we have that $|e_{n} - \nu_\xi| \leq 1/4$ by choosing $r$ small enough in terms of $\rho_1$ and hence of $\rho$. Moreover, since 
$$
\nu'_\xi \cdot \nu_\xi =\nu'_{f^{-1}(\xi)} \cdot \nu_{f^{-1}(\xi)}\,,
$$
\cite[formula (2.29)]{JEMS} implies 
$$
\nu'_{f^{-1}(\xi)} \cdot \nu_{f^{-1}(\xi)} = \sqrt{1-(\nu_{f^{-1}(\xi)} \cdot e_1)^2} 
$$
and \eqref{bellachegira} gives \eqref{nuxinuxip}.
%
Therefore 
\begin{equation}\label{lucionediavolo}
|\kappa_i''(\bar \xi)| \leq C\,.
\end{equation}
for some constant $C=C(\rho)$. 

Let $x=\overline{f(p_{min})}$ and $y=\overline{f(p)}$ be the projections of $f(p_{min})$ and $f(p)$ onto $\pi_\infty$, respectively, and let $E_r$ be the projection of $f(U_r(q))$ onto $\pi_\infty$. From \eqref{dist_equiv} we have that $|x-y| \leq C\de$, with $C\geq 1$ which depends only on $\rho$.  We can choose $\delta$ small enough (compared to $\rho$) such that $B_{8 C \delta}(x) \cap \partial E_r \subset U''$, apply theorem 1.3 in \cite{BCN} and corollary 8.36 in \cite{GT} and find
\begin{equation} \label{violacarleson}
\sup_{B_{2C\de}(x) \cap E_r} (u - \hat u) \leq C_1 (u - \hat u)(z) + \oscH \,,
\end{equation}
with $z=x + 4C \delta \nu_x''$, where $\nu_x''$ is the interior normal to $U''$ at $x$. By choosing $\delta$ small enough in terms of $\rho$, the bound on the curvatures of $U''$ implies that the point $z$ has distance $4C\de$ from the boundary of $E_r$. Since $d_\Sigma(q,U')= \delta$, then the distance (in $\pi_\infty$) of $O$ from the boundary of $E_r$ is at least $c\delta$ (as follows from \eqref{dist_equiv}), where $c<C$ depends only on $\rho$. From Harnack's inequality 
$$
C_1 (u - \hat u)(z) + \oscH \leq C_2(u(0) - \hat u (0)  + \oscH) \,,
$$
and from \eqref{violacarleson} we obtain that 
\begin{equation*}
0 \leq \sup_{B_{2C\de}(x) \cap E_r} (u - \hat u) \leq C_2(u(0) - \hat u (0)  + \oscH) \,.
\end{equation*}
Boundary regularity estimates (see e.g. \cite[Corollary 8.36]{GT}) yield
\begin{equation} \label{violaharnack}
0 \leq  \|u - \hat u\|_{C^1(B_{C\de}(x) \cap E_r)} \leq C_3 ( (u(0) - \hat u (0))  + \oscH )\,.
\end{equation}
Since $d_\Sigma(q, \partial \Sigma)=\delta$, from Case 1 we know that 
\begin{equation*}
d(q,\hat q) + |N_{q} - \tau_{\hat q}^q(N_{\hat q}) |_q \leq C\oscH \,,
\end{equation*}
where $\hat q$ is the first intersecting point between $\hat \Sigma$ and the geodesic path starting form $q$ and tangent to $\nu_{q}$ at $q$ (recall that $f(q)=e_n$ and $N_q=e_n$). From \eqref{violaharnack} we obtain that 
\begin{equation} \label{violaharnack_I}
0 \leq  \|u - \hat u\|_{C^1(B_{C\de}(x) \cap E_r)} \leq C \oscH \,.
\end{equation}
We define $\hat p^*$ so that $\hat p^* = f(y,\hat u(y))$. Since $y \in B_{C\de}(x)$, \eqref{violaharnack_I} implies 
$$
d(f(p),f(\hat p^*)) + |\nu_{f(p)} - \nu_{f(\hat p^*)}| \leq C \oscH \,.
$$
Since $f(p)$ and $f(\hat p^*)$ are on the same vertical line, we can write 
$$
d(f(p),f(\hat p^*)) + |N_{f(p)} - \tau (N_{f(\hat p^*)}) |_{f(p)} \leq C \oscH \,,
$$
where $\tau$ is the parallel transport along the vertical segment connecting $f(\hat p^*)$ with $f(p)$. Lemma \ref{lem_p_pstar} yields 
$$
d(p,\hat p) + |N_{p} - \tau_{\hat p}^p (N_{\hat p}) |_{p} \leq C \oscH \,,
$$ 
as required.

\subsubsection{Case 3: $0 < d_\Sigma(p_0,\pa \Sigma) < \de$.}

We first show that the center of the interior touching ball of radius $\rho $ to $S$ at $p_0$, say $\mathsf{B}_{\rho}(a)$, lies on the left of $\pi$, i.e. $a \cdot e_1 \leq 0$.  Indeed, since $p_0$ is  a tangency point, $p_0^\pi\in S$  and hence $p_0^\pi$ does not lie in $\mathsf{B}_{\rho}(a)$. This implies
$$
d(p_0,a)=\rho\leq d(p_0^\pi,a) \,,
$$
and since $p_0$ and $p_0^\pi$ have the same height we have 
$$
|p_0-a|^2\leq |p_0^\pi-a|^2 \,,
$$
which implies that $a\cdot e_1\leq 0$.


Now we prove that $\Sigma$ and $\pi$ intersect transversally at $p_0$ (see \eqref{quasi_ortog_p0} below). Since $d(p_0,\pi) \leq d_{\Sigma}(p_0, \pa \Sigma) \leq \de$, then $d(p_0,p_0^\pi) \leq 2 \de$. We can choose $\delta$ small in terms of $\rho$ so that $p_0^\pi \in \mathcal{U}_{\rho_1}(p_0)$. From \eqref{bound on nu N+1} we have that 
\begin{equation} \label{uuuhh}
g_{p_0}(N_{p_0}, \tau_{p_0^\pi}^{p^\pi}(N_{p_0^\pi}))\geq  \sqrt{1-C^2\delta^2}\\ \quad \textmd{ and } \quad \ \ |N_{p_0} - \tau_{p_0^\pi}^{p_0}(N_{p_0^\pi})|_{p_0} \leq C \delta\,.
\end{equation}
Since 
$$
g_{p_0}(N_{p_0}, \omega_{p_0}) = - g_{p_0^\pi}(N_{p_0}, \omega_{p_0^\pi}) \,,
$$
and $g_{p_0}(N_{p_0}, \omega_{p_0}) \geq 0$ by construction, then
$$
0 \leq 2 g_{p_0}(N_{p_0}, \omega_{p_0}) = g_{p_0}(N_{p_0} - \tau_{p_0^\pi}^{p_0}(N_{p_0^\pi}), \omega_{p_0}) \leq | N_{p_0} - \tau_{p_0^\pi}^{p_0}(N_{p_0^\pi}) |_{p_0} \leq C\delta \,,
$$
where the last inequality follows from \eqref{uuuhh}. By choosing $\delta$ small compared to $C$ (in terms of $\rho$) we have 
\begin{equation} \label{quasi_ortog_p0}
0 \leq g_{p_0}(N_{p_0}, \omega_{p_0}) \leq \frac{1}{4} \,.
\end{equation}

Now we apply an isometry $f\colon \mathbb H^n \to \mathbb H^n$ such that  $f(p_0)=e_n$ and $f_{*|p_0}(N_{p_0})=e_n$. 
As for Case 2 (with $q$ replaced by $p_0$), we locally write $f(\Sigma)$ and $f(\hat \Sigma)$ as graphs of function $u, \hat u\colon E_r\to \RR$, respectively. Moreover, we denote by $U''$ the portion of $\pa E_r$ which is obtained by projecting $f({U}_r(p_0) \cap \pi)$ onto $\pi_{\infty}$. We remark that $u=\hat u$ on $U''$ and, again by arguing as in Case 2, that the principal curvatures of $U''$ can be bounded by a constant $\mathcal{K}$ depending only on $\rho$.

Let $\bar x \in U''$ be a point such that
$$|\bar x| = \min_{x \in U''} |x|.$$
Notice that $|\bar x| \leq C d_\Sigma (p_0, \pa \Sigma) < C \de$, where $C$ is the constant appearing in \eqref{dist_equiv}. Let $\nu''_{\bar x}$ be the interior normal to $U''$ at $\bar x$,  and set
$$
y=\bar x +2C \de \nu''_{\bar x}
$$
(see Figure \ref{fig case 3}). We notice that the principal curvatures of $U''$ are bounded by $\mathcal{K}$ and, by choosing $\delta$ small compared to $\rho$, we have $2C \de \leq \mathcal{K}^{-1}$ and the ball $B_{2C\de}(y)$ is contained in $E_r$ and it is tangent to $U''$ at $\bar x$, with $\nu_{\bar x}''= - \bar x / |\bar x|$. Since $u(O)=\hat u (O)$ and from \cite{JEMS}[Lemma 2.5]  (where we set: $x_0=\bar x$, $c=y$ and $r=2C\de$) we find that
\begin{equation} \label{u - hat u case 3}
\|u-	\hat u\|_{C^1(B_{C\de /2}(y))} \leq C \oscH.
\end{equation}
Let $q=(y,u(y))$ and $\hat q^*=(y,\hat u(y))$ so that \eqref{u - hat u case 3} gives
$$
d(q,\hat q^*) + |\nu_q - \nu_{\hat q^*}| \leq C \oscH \,.
$$
Up to choose a smaller $\delta$, we can assume that $r=2C\delta \leq \bar r$, so that  Lemma \ref{lem_p_pstar} yields 
$$
d(q,\hat q) + |N_q - \tau_{\hat q}^q (N_{\hat q})|_q  \leq C \oscH \,,
$$
where $\hat q$ is defined as $\hat p_1$ in lemma \ref{lem_p_pstar}. Next we observe that from our construction it follows that 
$$
d_\Sigma (q,\pa \Sigma) \geq \de \,.
$$
Indeed, if we denote by $z$ the point on $\partial U_r(p_0)$ which realizes $d(q,\partial U_r(p_0))$, then
$$
d_\Sigma (q,\pa \Sigma) \geq d(q,z) = \arccosh \left(1 + \frac{|\bar q - \bar z|^2}{2q_n z_n} \right) \geq \arccosh \left(1 + \frac{2C^2 \delta^2}{q_n z_n} \right) \,.
$$
Moreover, since $|y|,|\bar z| \leq 2C\delta$, from \eqref{bounds on u} we have that $q_n \geq 1 -C_1(\rho) \delta^2$ and $z_n \geq 1 -C_1(\rho) \delta^2$ so that we can obtain $d_\Sigma (q,\pa \Sigma) \geq \delta$ by choosing $\delta$ small enough in terms of $\rho$. Being $d_\Sigma (q,\pa \Sigma) \geq \delta$ we can apply Cases 1 and 2 to conclude.

\begin{figure}[h]
\begin{tikzpicture} [scale=1.5]
\fill[gray!20!white] ({-0.51}, {0}) circle (0.8);
\fill[white]({-0.51}, {0}) circle (0.4);
\node at (-1.7,1.8) { $\pi_{\infty}$};
\node at (-1.6,0.6) { $E_r$};
\node at (0.1,1.6) {$U''$};
\node at (-0.7,0.6) {\footnotesize $2C\delta$};
\filldraw [black] (0,0) circle (0.5 pt) node[above] {\footnotesize $O$};
\filldraw [black] (0.29,0) circle (0.5 pt) node[right] {\footnotesize $\bar x$};
\filldraw [black] ({0.29-0.8}, {0}) circle (0.5 pt) node[below] {\footnotesize $ y$};
\draw [domain=100:260] plot ({2*cos(\x)}, {2*sin(\x)});
\draw (0.29,0)  --({0.29-0.8}, {0});
\draw ({2*cos(100)}, {2*sin(100)}) ..  controls(1/2,0).. ({2*cos(260)}, {2*sin(260)});
\draw({-0.51}, {0}) circle (0.8);
\draw({-0.51}, {0}) -- ({-0.51+0.8*cos(130)},{0.8*sin(130)});
\draw({-0.51}, {0}) circle (0.4);
\draw({-0.51}, {0}) -- ({-0.51+0.4*cos(200)},{0.4*sin(200)})node[left] {\footnotesize $C\delta$};;
\end{tikzpicture}
\caption{Case 3 in the proof of theorem \ref{thm approx symmetry 1 direction}.}
\label{fig case 3}
\end{figure}
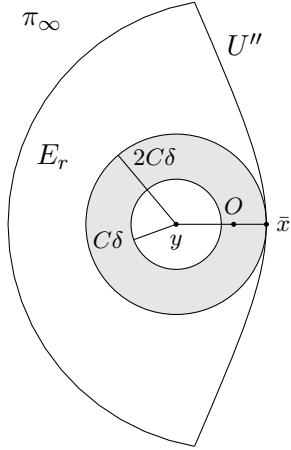

\subsubsection{Case 4: $p_0 \in \pa \Sigma$.}
This case follows from Case 3 when $d_\Sigma(p_0,\pa \Sigma) \to 0$. Indeed, in this case $E_r$ is a half-ball on $\pi_\infty$ and the argument used in Case $3$ can be easily adapted (see also the corresponding case in \cite{JEMS}).
This completes the proof of the first part of theorem \ref{thm approx symmetry 1 direction}.

\subsection{Proof of the second part of theorem \ref{main}.} Now we focus on the second part of the statement of theorem \ref{thm approx symmetry 1 direction}, showing that $\Omega$ is contained in a neighborhood of radius $C \oscH$ of $\Sigma \cup \Sigma^\pi$.

Assume by contradiction that there exists $x \in \Omega$ such that
$d(x,\Sigma \cup \Sigma^\pi)> C \oscH $. By construction, we can assume that $x\cdot e_1<0$ and hence from the connectness of $\Omega$ we can find a point $y\in \Omega$, with $y\cdot e_1<0$,  such that
$$
C \oscH < d(y,\Sigma) \leq 2 C \oscH \,.
$$
Let $p$ be a projection of $y$ over $\Sigma$. First assume that $p\cdot e_1\neq 0$. From the first part of theorem \ref{thm approx symmetry 1 direction} we have that there exists a point $\hat p \in S$ such that $\hat p=\gamma(t)$ where $\gamma$ is the geodesic satisfying $\gamma(0)=p$ and $\dot \gamma (0) = - N_p$ and such that $0\leq t \leq C \oscH$ and $|N_p-\tau_{\hat p}^p (N_{\hat p})|_p \leq C \oscH$. Moreover, we notice that by construction $\hat p$ is on the geodesic $\gamma$ connecting $y$ and $p$. Since $C\oscH$ is small (less than $\rho$ is enough), this implies that $y$ belongs to the exterior touching ball of radius $\rho$ at $p$, that is $y \not \in \Omega$, which is a contradiction. If $p \cdot e_1=0$ we obtain again a contradiction from the exterior touching ball condition since from \eqref{bellachegira} we have that $g_p(N_p,p_n e_1)\leq 1/4$. Hence the claim follows. \hspace*{\fill}  \begin{math}\Box\end{math}

\section{Proof of theorem \ref{main}} \label{section_6}
Let $\ep>0$ be the constant given by theorem \ref{thm approx symmetry 1 direction}. Let $S$ be a connected closed $C^2$-hypersurface embedded in the hyperbolic half-space $\mathbb H^n$ satisfying a touching ball condition of radius $\rho$ and such that $\oscH \leq \ep$, as in the statement of theorem \ref{main}. Given a direction $\omega$, let $\Omega_\omega$ be the maximal cap of $\Omega$ in the direction $\omega$, accordingly to the notation introduced in subsection \ref{movingplannes}. As a consequence of the second part of theorem \ref{thm approx symmetry 1 direction} we have that
\begin{equation} \label{aereo}
|\Omega_\omega|_g \geq \frac{|\Omega|_g}{2} - C \oscH \,,
\end{equation}
for some constant $C$ depending only on $n,\rho$ and $|S|_g$. Moreover the reflection $\Omega^\pi$ of $\Omega$ about $\pi$ satisfies 
\begin{equation} \label{aereo2}
|\Omega \triangle \Omega^\pi |_g = 2 (|\Omega|_g - 2 |\Omega_\omega|_g) \leq 4C \oscH \,,
\end{equation}
where $\Omega \triangle \Omega^\pi $ denotes the symmetric difference between $\Omega$ and $\Omega^\pi$. 

Now the problem consists in defining an approximate center of mass $\mathcal O$ and  quantifying the reflection about it. In the Euclidean case this step is obtained by applying the method of the moving planes in $n$ orthogonal directions and defining $\mathcal O$ as the intersection of 
the corresponding $n$ critical hyperplanes (see e.g. \cite{JEMS}). In the hyperbolic context, the situation is different since the critical hyperplanes corresponding to $n$ orthogonal directions do not necessarily intersects. However, when theorem \ref{thm approx symmetry 1 direction} is in force we can prove that they always intersect.

\begin{lemma}\label{6.1}
Let $S$ satisfy the assumptions of theorem \ref{thm approx symmetry 1 direction} and let $\{\pi_{e_1},\ldots,\pi_{e_n}\}$ be the critical hyperplanes corresponding to $\{e_1,\ldots,e_n\}$. Then 
$$
\bigcap_{i=1}^n\pi_{e_i}= \mathcal O
$$
for some $\mathcal O\in \mathbb H^n$.
\end{lemma}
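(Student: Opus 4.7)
The plan is to prove the lemma in two steps: first, show that the center of mass $\mathcal O_\Omega$ of $\Omega$ lies within hyperbolic distance $C\,\oscH$ of each critical hyperplane $\pi_{e_i}$; then invoke a quantitative transversality argument near $\mathcal O_\Omega$ to conclude that the $n$ hyperplanes meet at a single point.

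For the first step, I would apply the second part of Theorem \ref{thm approx symmetry 1 direction} in each direction $e_i$ to obtain $|\Omega \triangle \Omega^{\pi_{e_i}}|_g \leq C\,\oscH$ as in \eqref{aereo2}. Since hyperbolic reflection across $\pi_{e_i}$ is an isometry, the reflected center of mass $\mathcal O_\Omega^{\pi_{e_i}}$ is the center of mass of $\Omega^{\pi_{e_i}}$, and formula \eqref{gradP} for $\nabla P$ gives a Lipschitz estimate of the center of mass in terms of the symmetric difference, so $d(\mathcal O_\Omega,\mathcal O_\Omega^{\pi_{e_i}}) \leq C\,\oscH$, which yields $d(\mathcal O_\Omega,\pi_{e_i}) \leq C\,\oscH$. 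Lemma \ref{lemma_dillovoi} serves as the qualitative counterpart of this estimate.

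Next, I would verify that the unit normals $\nu_i$ to $\pi_{e_i}$, after parallel transport to $\mathcal O_\Omega$, form a basis of $T_{\mathcal O_\Omega}\mathbb H^n$ close to orthonormal. Each $\pi_{e_i}$ is orthogonal at $\gamma_{e_i}(m_{e_i})$ to $\dot\gamma_{e_i}(m_{e_i})$, which is the parallel transport of $e_i$ along $\gamma_{e_i}$; since the $\dot\gamma_{e_i}(0)=e_i$ form an orthonormal basis at $e_n$, transporting the $\nu_i$ further to $\mathcal O_\Omega$ via Proposition \ref{STP2} yields vectors that are $O(\oscH)$-close to an orthonormal basis, once $\ep$ is chosen small enough compared to the geometry encoded by $\rho$ and $|S|_g$ (the area bound is used here to control the hyperbolic diameter of a region containing all the relevant geodesic paths).

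Finally, existence and uniqueness of $\mathcal O$ would follow from the hyperbolic inverse function theorem applied to $F(p)=(d(p,\pi_{e_1}),\dots,d(p,\pi_{e_n}))$: since $|F(\mathcal O_\Omega)|\leq C\,\oscH$ and the Jacobian $DF(\mathcal O_\Omega)$ is $O(\oscH)$-close to an invertible linear map, there is a unique zero of $F$ within distance $O(\oscH)$ of $\mathcal O_\Omega$, giving the desired point $\mathcal O$. The main obstacle is precisely this existence step: in hyperbolic space, two totally geodesic submanifolds each passing near a given point need not intersect, as ultraparallel geodesics in $\mathbb H^2$ illustrate, so one must combine the quantitative proximity of $\mathcal O_\Omega$ to every $\pi_{e_i}$ with the quantitative near-orthogonality of the $\nu_i$ to rule out this ultraparallel behavior and secure a genuine common intersection point, which is then automatically unique by transversality.
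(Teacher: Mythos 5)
Your route is genuinely different from the paper's: the paper never uses the center of mass in this lemma, but instead argues by contradiction, assuming $\pi_{e_i}\cap\pi_{e_j}=\emptyset$, splitting $\Omega$ into three pieces by the two hyperplanes, and combining the volume estimate \eqref{aereo} with an iterated reflection of slabs of width $\ell$ to force essentially all of the mass of $\Omega$ beyond distance $\diam(S)$ from $\pi_{e_j}$, which is absurd. Your first step is sound in principle: in nonpositive curvature $\mathrm{Hess}\,d(\cdot,a)^2/2\geq g$, so $P$ is uniformly convex, and together with \eqref{gradP}, \eqref{aereo2}, the lower bound $|\Omega|_g\geq|\mathsf{B}_\rho|_g$ and the diameter bound of Proposition \ref{diametro}, this does give $d(\mathcal O_\Omega,\pi_{e_i})\leq C\oscH$; it is a quantitative version of Lemma \ref{lemma_dillovoi} that the paper does not prove, so you would need to write it out, but there is no obstruction.

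The genuine gap is the transversality step. The unit normals to the critical hyperplanes near $\mathcal O_\Omega$ are \emph{not} $O(\oscH)$-close to an orthonormal frame. In the hyperboloid model $\mathbb H^n\subset\RR^{n,1}$ one has $\pi_{e_i}=u_i^\perp\cap\mathbb H^n$ with $u_i=\dot\gamma_{e_i}(m_{e_i})=(\sinh m_{e_i})\,p_0+(\cosh m_{e_i})\,f_i$, where $p_0$ corresponds to $e_n$ and $\{f_i\}$ to the frame $\{e_i\}$; since the normal field of a totally geodesic hyperplane is parallel along it, the normal to $\pi_{e_i}$ at \emph{any} of its points is $u_i$, and the Minkowski product gives $\langle u_i,u_j\rangle=-\sinh m_{e_i}\sinh m_{e_j}$ for $i\neq j$. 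This is an $O(1)$ quantity — already for an exact sphere whose center is off the axes, where $\oscH=0$ — so no choice of $\ep$ makes your near-orthonormality claim true; the holonomy you would have to control is of order $\diam(S)^2$, not $\oscH$. The claim is also not a harmless overstatement: two such hyperplanes fail to intersect precisely when $|\sinh m_{e_i}\sinh m_{e_j}|\geq1$, so transversality genuinely requires input from the almost-CMC hypothesis. The fix is to derive transversality \emph{from} your step 1 rather than from the frame at $e_n$: writing $\mathcal O_\Omega=x_0p_0+\sum x_if_i$ with $x_0\leq\cosh(\diam S)$, the inequalities $|\langle \mathcal O_\Omega,u_i\rangle|\leq\sinh(C\oscH)$ force $\sum_i\tanh^2m_{e_i}\leq1-c$ with $c=c(n,\rho,|S|_g)>0$ once $\oscH$ is small, and a Cauchy--Schwarz computation then shows the Gram matrix $(\langle u_i,u_j\rangle)$ is bounded below by $cI$; hence $\mathrm{span}(u_1,\dots,u_n)$ is spacelike and its orthogonal complement meets $\mathbb H^n$ in exactly one point, which is the desired $\mathcal O$. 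With that replacement your argument closes, but as written the key quantitative claim supporting the inverse function theorem step is false.
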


\begin{proof}
It is enough to show that $\pi_{e_i}\cap \pi_{e_j}\neq \O$ for every $i,j=1,\dots n$. 
We may assume that $e_n \in S$. Let $i \neq j$. To simplify the notation we set 
$$
\pi_{k}^s= \pi_{e_k,m_{e_k}+s} \,,  \quad k \in  \{1,\ldots,n\} \,, \ s \in \mathbb{R} \,,
$$
so that the critical hyperplane in the direction $e_k$ is denoted by $\pi_k^0$. 

We prove the assertion by contradiction. Assume that $\pi_i^0 \cap \pi_j^0 =\O$ for some $i \neq j$. Then $\pi_i^0$ and $\pi_j^0$ divide $\Omega$ into three disjoint sets which we denote by $\Omega_1,\Omega_2,\Omega_3$ and we may assume that $\Omega_1$ is the maximal cap in the direction $e_i$ and $\Omega_1 \cup \Omega_2$ is the maximal cap in the direction $e_j$ (see figure \ref{O}). Moreover, in view of \eqref{aereo} we have that 
$$
|\Omega_1|_g \geq \frac{|\Omega|_g}{2} - C \oscH \,,
$$
and 
$$
|\Omega_1|_g + |\Omega_2|_g   \geq \frac{|\Omega|_g}{2} - C \oscH \,.
$$
From this, and since the reflection of $\Omega_1$ about $\pi_i^0$ is contained in $\Omega_2 \cup \Omega_3$ and the reflection of $\Omega_1 \cup \Omega_2$ about  $\pi_j^0$ is contained in $\Omega_3$, we have that 
$$
|\Omega_2|_g \leq 2 C \oscH \,.
$$
We notice that for every $k=1,\ldots,n$, we have that $\pi_{k}^{s+t}$ and $\pi_k^{s-t}$ are the two connected components of the set of points which are far $t$ from $\pi_k^s$. 
We define 
$$
\ell = \min \{d(\pi_i^0\cap \Omega,\pi_j^0\cap \Omega), \ i,j=1,\ldots,n\, \text{ and }  i \neq j \} \,.
$$
Since $\pi_i^0$ and $\pi_j^0$ do not intersect  and $S \subset \mathsf{B}_{\diam (S)}(e_n)$, we have that $\ell>0$ and proposition \ref{diametro} implies that $\ell$ depends only on $n$, $\rho$ and $|S|_g$. 
Therefore 
$$
\Omega_2 \supseteq \mathcal{E}_1 := \bigcup_{s\in(0,\ell)} \Omega \cap \pi_j^s \,,
$$
and hence $|\mathcal{E}_1|_g \leq 2 C \oscH$.
By reflecting $\mathcal{E}_1$ about $\pi_i^0$ we obtain that most of the mass of $\Omega_1$ must be at distance more than $\ell$ from $\pi_i^0$, i.e. the set $\Omega_{e_i, \ell} := \bigcup_{s\in(\ell,+ \infty)} \Omega \cap \pi_i^s$ is such that
$$
|\Omega_{e_i,\ell} |_g \geq \frac{|\Omega|_g}{2} - 2 C \oscH \,.
$$ 
Since $d(\Omega_{e_i,\ell}, \pi_j^0 \cap \Omega) \geq 2 \ell$ we have that most of the mass of $\Omega_3$ is at distance $2 \ell $ from $\pi_j^0$. This implies that  the set 
$$
\mathcal{E}_2 = \bigcup_{s\in(-2\ell,\ell)} \Omega \cap \pi_i^s 
$$
is such that $|\mathcal{E}_2|_g \leq 4 C \oscH$. By iterating this argument above we find $m \in \NN$ such that $m \ell > \diam(S)$ and
$$
0 = |\Omega_{e_i,m\ell} |_g \geq \frac{|\Omega|_g}{2} - (m+1) C \oscH \,.
$$ 
This leads to a contradiction provided that $C\oscH$ is small in terms of $n$, $\rho$ and $|S|_g$. Therefore 
$\pi_{e_i}\cap \pi_{e_j}\neq \O$. 
\end{proof}

\begin{figure}[h]
$$
\begin{tikzpicture}
\shade[rotate=60](2,1.7) ellipse (60pt and 40pt);
\filldraw [black] (0,1) circle (1pt) node[right] {\footnotesize{$e_n$}};
\node at (-2.5,4.4) {$\mathbb H^2$};
\draw[rotate=60](2,1.7) ellipse (60pt and 40pt);
\draw[->,dashed] (0,0) -- (0,5) node[right]{\footnotesize{$\gamma_{e_{2}}$}};
\draw[->] (-4,0) -- (4,0);
\draw (3.4,0) arc (0:180:3.4);
\draw[dashed] (1,0) arc (0:180:1);
\node at (1.3,0.2) {\footnotesize{$\gamma_{e_{1}}$}};
\node at (-2.4,0.2) {\footnotesize{$\pi_{e_{1}}$}};
\node at (3.8,.4) {\footnotesize{$\pi_{e_{2}}$}};
\node at (-0.7,3.7) {\footnotesize{$\Omega_1$}};
\node at (-0.7,2.5) {\footnotesize{$\Omega_2$}};
\node at (-1.2,0.9) {\footnotesize{$\Omega_3$}};
\node at (1.3,4) {\footnotesize{$S$}};
\draw (-0.2,0) arc (0:180:1.3);
\node at (3.8,-.2) {\footnotesize{$\pi_\infty$}};
\end{tikzpicture}
$$
\caption{A picture of the proof of lemma \ref{6.1} in $\mathbb H^2$. Here $e_j=e_1$ and $e_i=e_2$. }
\label{O}
\end{figure}
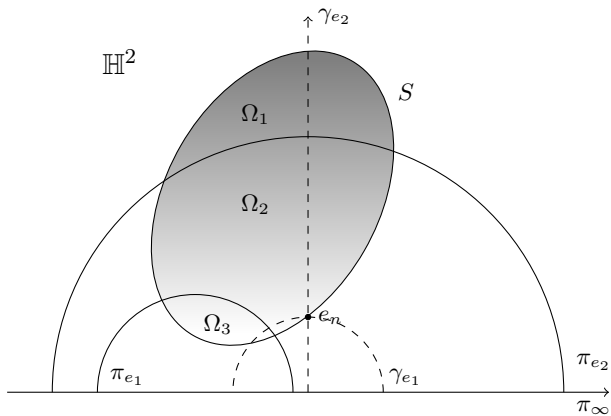

We refer to the point  $\mathcal O = \bigcap_{i=1}^{n} \pi_{e_i}$ as to the {\em the approximate center of symmetry}.
Note that,  the reflection $\mathcal{R}$ about $\mathcal O$ can be written as
$$
\mathcal{R}(p) = \pi_{e_1} \circ \cdots \circ \pi_{e_n} (p)\,,
$$
where we identify $\pi_{e_i}$ with the reflection about the corresponding hyperplane.

Next we show that if $\oscH$ is small enough, then $\pi_{\omega}$ is close to $\mathcal O$, for every direction $\omega$.  
%
%
\begin{lemma} \label{lemmarta}
There exist $\ep,C>0$ depending on $\rho,n$ and $|S|_g$ such that if the mean curvature of $S$ satisfies 
$
\oscH\leq \ep\,,
$
then 
$$
d(\mathcal O, \pi_{\omega}) \leq C \oscH\,.
$$
\end{lemma}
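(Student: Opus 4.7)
The plan is to introduce the hyperbolic center of mass $\mathcal O_\Omega$ of $\Omega$ as an intermediate point between $\mathcal O$ and $\pi_\omega$, via a quantitative version of Lemma \ref{lemma_dillovoi}. By the approximate reflection symmetry of $\Omega$ about each critical hyperplane, $\mathcal O_\Omega$ will be close to every critical hyperplane; combined with $\mathcal O = \bigcap_{i=1}^n \pi_{e_i}$, this will give the desired estimate.

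First I would establish an $L^1$-stability of the hyperbolic center of mass: for bounded measurable sets $A,B\subset \mathbb H^n$ of equal measure and uniformly bounded diameter,
\begin{equation*}
d(\mathcal O_A,\mathcal O_B)\leq \frac{C(\diam A,\diam B)}{|A|_g}\,|A\triangle B|_g.
\end{equation*}
This follows from formula \eqref{gradP}, the uniform convexity of the Karcher functional $P$ on bounded regions, and a direct comparison of $\nabla P_A$ with $\nabla P_B$ at the two minimizers. Applied to $B=\Omega^\pi$ (using the isometry equivariance $\mathcal O_{\Omega^\pi}=\pi(\mathcal O_\Omega)$) it gives, for every hyperplane $\pi$,
\begin{equation*}
d(\mathcal O_\Omega,\pi)=\tfrac12\, d(\mathcal O_\Omega,\pi(\mathcal O_\Omega))\leq \frac{C}{|\Omega|_g}\,|\Omega\triangle\Omega^\pi|_g.
\end{equation*}
Combining with \eqref{aereo2} applied to $\pi_\omega,\pi_{e_1},\dots,\pi_{e_n}$ and observing that $|\Omega|_g$ and $\diam(\Omega)$ are bounded above and below by constants depending only on $n,\rho,|S|_g$ (via the touching ball condition, the bound on $|S|_g$, and Proposition \ref{diametro}), I deduce
\begin{equation*}
d(\mathcal O_\Omega,\pi_\omega)\leq C'\oscH, \qquad d(\mathcal O_\Omega,\pi_{e_i})\leq C'\oscH,\quad i=1,\dots,n.
\end{equation*}
Since $\mathcal O=\bigcap_{i=1}^n\pi_{e_i}$, the second batch of inequalities would yield $d(\mathcal O_\Omega,\mathcal O)\leq C''\oscH$, and the triangle inequality
\begin{equation*}
d(\mathcal O,\pi_\omega)\leq d(\mathcal O,\mathcal O_\Omega)+d(\mathcal O_\Omega,\pi_\omega)\leq C\,\oscH
\end{equation*}
would complete the proof.

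The main obstacle I expect is the last step, that is, passing from the bounds $d(\mathcal O_\Omega,\pi_{e_i})\leq C'\oscH$ to a bound $d(\mathcal O_\Omega,\mathcal O)\leq C''\oscH$: this requires a \emph{quantitative transversality} of the $\pi_{e_i}$'s at their common intersection $\mathcal O$, since in the hyperbolic setting hyperplanes obtained by moving planes along different geodesic directions need not meet orthogonally. To handle this I would invoke the first part of Theorem \ref{thm approx symmetry 1 direction} in each direction $e_i$: the approximate one-direction symmetry forces the unit normal to $\pi_{e_i}$, parallel-transported to $\mathcal O$, to lie close to the natural image of $e_i$ (up to a bounded distortion coming from hyperbolic parallel transport across the region enclosing $\Omega$, whose size is controlled by Proposition \ref{diametro}). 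Since $\{e_1,\dots,e_n\}$ is an orthonormal frame, the resulting $n$ normals at $\mathcal O$ form an almost-orthonormal frame, giving the required uniform transversality constant provided $\oscH$, and hence the perturbation of these normals from $\{e_i\}$, is small.
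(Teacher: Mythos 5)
Your first half is correct and is a genuinely different route from the paper's: the $L^1$-stability of the Karcher mean (via the uniform convexity of $P$, which in $\mathbb H^n$ follows from $\mathrm{Hess}\,\tfrac12 d(\cdot,a)^2\geq g$, together with \eqref{gradP} and the equivariance $\mathcal O_{\Omega^\pi}=\pi(\mathcal O_\Omega)$) combined with \eqref{aereo2} does give $d(\mathcal O_\Omega,\pi_\omega)\leq C\oscH$ for every direction, with constants controlled by $n,\rho,|S|_g$. The genuine gap is exactly the step you flag as the "main obstacle", and your proposed fix does not work. The normals at $\mathcal O$ to the critical hyperplanes $\pi_{e_1},\dots,\pi_{e_n}$ are \emph{not} almost orthonormal: $\pi_{e_i}$ is orthogonal to $\gamma_{e_i}$ at $\gamma_{e_i}(m_{e_i})$, and the offsets $m_{e_i}$ are only bounded by $\diam(S)$, not by $\oscH$, so there is no smallness to exploit. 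Two hyperplanes orthogonal to two mutually orthogonal geodesics at points displaced from the geodesics' common point meet at an angle strictly smaller than $\pi/2$ (the Lambert quadrilateral phenomenon), and this angle tends to $0$ as the displacements approach the threshold beyond which the hyperplanes stop intersecting; for instance in $\mathbb H^3$ with $m_{e_1}=m_{e_2}=m_{e_3}$ near that threshold the three normals at the common point degenerate to three coplanar vectors at mutual angles $2\pi/3$, which are linearly dependent. Also, Theorem \ref{thm approx symmetry 1 direction} gives no information about the position of $\gamma_{e_i}(m_{e_i})$ relative to $\mathcal O$, so it cannot "force the normal to lie close to the image of $e_i$". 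What you actually need is a uniform lower bound, in terms of $n,\rho,|S|_g$, on the smallest singular value of the frame of normals at $\mathcal O$; Lemma \ref{6.1} only provides nonemptiness of $\bigcap_i\pi_{e_i}$, and without such a quantitative transversality the estimates $d(\mathcal O_\Omega,\pi_{e_i})\leq C\oscH$ do not imply $d(\mathcal O_\Omega,\mathcal O)\leq C\oscH$: all $n$ hyperplanes could be nearly tangent to one another along the segment joining $\mathcal O_\Omega$ to a distant $\mathcal O$.

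The paper avoids this issue entirely by never comparing $\mathcal O$ with the true center of mass. Setting $\mu=d(\mathcal O,\pi_\omega)$, it uses \eqref{aereo} and the composite reflection $\mathcal R$ about $\mathcal O$ to show that the slab of $\Omega$ between $\pi_{\omega}$ and $\pi_{\omega,m_\omega-\mu}$ (the parallel hyperplane through $\mathcal O$) has measure at most $C\oscH$; the monotonicity inherent in the moving-plane construction then bounds every subsequent slab of width $\mu$ by the same quantity, whence $|\Omega_\omega|_g\leq (\diam(\Omega)/\mu)\,C\oscH$, and combining this with $|\Omega_\omega|_g\geq |\Omega|_g/2-C\oscH$ and Proposition \ref{diametro} forces $\mu\leq C\oscH$. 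To salvage your approach you would have to either prove the quantitative transversality of the $\pi_{e_i}$ at $\mathcal O$ (a nontrivial strengthening of Lemma \ref{6.1}), or restate the lemma and rewrite the rest of Section \ref{section_6} with $\mathcal O_\Omega$ in place of $\mathcal O$, which your argument does control.
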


\begin{proof}
We may assume $\mathcal O\in \pi_{\omega,m_{\omega}-\mu}$, for some $\mu>0$  (otherwise we switch $\omega$ and  $-\omega$). Now  
we argue as in lemma 4.1 in \cite{CFMN}. We define $\mathcal{R}(\Omega)=\{ \mathcal{R}(p): \, p \in \Omega\}$. By choosing $\ep$ as the one given by theorem \ref{thm approx symmetry 1 direction}, from \eqref{aereo} and being $\mathcal{R}$ the composition of $n$ reflections, we have that
$$
|\Omega \triangle \mathcal{R}(\Omega)|_g \leq C \oscH \,,
$$
where $C$ is a constant depending on $n$, $\rho$ and $|S|_g$. It is clear that $d(\mathcal O, \pi_\omega) \leq \diam(S)$. We denote by $\Omega^{\pi_\omega}$ the reflection of $\Omega$ about $\pi_\omega$ and from \eqref{aereo2} we have that
$$
|\Omega \triangle \Omega^{\pi_\omega}|_g \leq C \oscH \,.
$$
Then the maximal cap $\Omega_{\omega}$ satisfies 
$$
|\Omega \cap \mathcal{R} (\Omega_{\omega}) |_g = |\mathcal{R}(\Omega) \cap \Omega_{\omega}|_g \geq |\Omega_{\omega}|_g - |\Omega \triangle \mathcal{R}( \Omega)|_g \geq \frac{|\Omega|_g}{2} - C \oscH \,,
$$
and from  
$$
\mathcal{R}(\Omega_{\omega}) \subset \bigcup_{s<0} \pi_{\omega,m_{\omega}-s} \,,
$$
we obtain that 
\begin{equation} \label{striscia_piccola}
\mu_0:= \big| \{ \Omega \cap \pi_{\omega,s}:\  m_\omega-\mu <s<m_{\omega}  \} \big|_g \leq C \oscH \,.
\end{equation}
Let 
$$
\mu_k= \big| \{ p \in  \Omega \cap \pi_{\omega,s}:\ m_\omega + (k-1)\mu <s<m_{\omega} + k \mu \} \big|_g
$$
for $k\in \NN$. We notice that by construction of the method of the moving planes we have that $\mu_k$ is decreasing, and hence
$$
\mu_k \leq \mu_0 \leq  C \oscH .
$$
Let $\Lambda = \sup\{s \in \mathbb{R}:\ \Omega \cap \pi_{\omega,m_\omega - \mu + s} \neq \emptyset \}$. It is clear that  
$$
\Lambda \leq \diam(\Omega) \,.
$$
Define $k_0$ as the smallest integer such that
$$
k_0 m_\omega\leq \diam(\Omega) \leq (k_0+1) m_\omega \,.
$$
From \eqref{aereo} we have
$$
\frac{|\Omega|_g}{2} - C\oscH \leq |\Omega_\omega |_g\leq \sum_{k=0}^{k_0} \mu_k  \leq k_0 \mu_0 \leq \frac{\diam(\Omega)}{m_\omega} C \oscH \,.
$$
Since $\diam (\Omega) \leq \diam(S)$, from proposition \ref{diametro} and assuming that $\oscH$ is less than a small constant depending on $n$, $\rho$ and $|S|_g$ we have that 
$$
m_\omega \leq C \oscH\,,
$$
where $C$ depends on $n$, $\rho$ and $|S|_g$.
\end{proof}

We are ready to complete the proof of theorem \ref{main}. Let $\ep$ be as in lemma \ref{lemmarta} and assume that the mean curvature of $S$ satisfies $\oscH\leq \ep\,.$
Let
$$
r=\sup\{s>0:\ \mathsf{B}_s(\mathcal O)\subset \Omega\} \quad \textmd{ and } \quad R=\inf\{s>0:\ \mathsf{B}_s(\mathcal O)\supset \Omega\} \,,
$$
so that $S \subset \overline{\mathsf{B}}_R \setminus  \mathsf{B}_r$. We aim to prove that  
$$
R-r \leq C \oscH \,,
$$ 
for some $C$ depending only on $n,\rho$ and $|S|_g$.

Let $p,q \in S$ be such that $d(p,\mathcal O)=r$ and $d(q,\mathcal O)=R$. We can assume that $p \neq q$ (otherwise the assertion is trivial). Let $t=d(p,q)$, 
$$
\omega:=\frac{1}{t}\tau_{p}^{e_n}(\exp_p^{-1}(q)) 
$$
and consider $\pi_\omega$. Let $\gamma:(-\infty,+\infty) \to \mathbb{H}^n$ be the geodesic such that $\gamma(s_p)=p$ and $\gamma(s_q)=q$.
We denote by $z$ the point on $\pi_\omega$ which realizes the distance of $\mathcal O$ from $\pi_\omega$. By construction $p\in \pi_{\omega,s_p}$ and $q\in \pi_{\omega,s_q}$ with $s_q=s_p+t$. We first prove that $d(q,z)\leq d(p,z)$. By contradiction assume that $d(q,z) > d(p,z)$. Since $q$ and $p$ belong to a geodesic orthogonal to the hyperplanes $\pi_{\omega,s}$ and $s_p< s_q$, then $s_q>m_\omega$. Since $\pi_\omega=\pi_{\omega,m_\omega}$ corresponds to the critical position on the method of moving planes in the direction $\omega$, we have that $\gamma(s) \in \Omega$ for any $s \in (m_\omega,s_q)$. Since $s_p<s_q$ we have that $|s_p-m_\omega| \geq |s_q-m_\omega|$ and being $\gamma$ orthogonal to $\pi_\omega$ we obtain $d(q,z)\leq d(p,z)$, which gives a contradiction.

From $d(q,z)\leq d(p,z)$ and by triangular inequality, we find
$$
r \geq R - d(\mathcal O, z) = R - d(\mathcal O, \pi_m) 
$$
and lemma \ref{lemmarta} implies $R-r \leq C \oscH $ and the proof is complete. \hspace*{\fill}  \begin{math}\Box\end{math}

%
\section{Proof of corollary \ref{main2}} \label{section_proof_main2}
The proof is analogous to the proof of \cite[Theorems 1.2 and 1.5]{CFMN}. We first prove an intermediate result, which proves that $S$ is a graph over $B_{r}$, and moreover it gives a first (non optimal) bound on $\|\Psi\|_{C^1(\partial B_r)}$, i.e. it gives that $\|\Psi\|_{C^1(\partial B_r)} \leq C (\oscH)^{1/2})$. Then we obtain the sharp estimate \eqref{Lipschitz_bound} by using elliptic regularity theory.

Let ${\sf B}_{r}(\mathcal O)$ and ${\sf B}_{R}(\mathcal O)$ be such that $0 \leq R - r \leq C \oscH$ and let $0<t<r- C\oscH$. For any point $p \in S$ we consider the set $\mathcal E^-(p)$ consisting of points of $\mathbb{H}^n$ belonging to some geodesic path connecting $p$ to the boundary of ${\sf B}_{t}(\mathcal O)$ tangentially. Then we denote by $\mathcal C^-(\mathcal O)$ the geodesic cone enclosed by $\mathcal E^-(p)$ and the hyperplane containing   $\mathcal E^-(p)\cap {\sf B}_{t}(\mathcal O)$. 
Moreover, we define $\mathcal C^+(p)$ as the reflection of $\mathcal C^-(p)$ with respect to $p$.

We first show that for any $p \in S$ we have that $\mathcal C^-(p)$ and $\mathcal C^+(p)$ are contained in the closure of $\Omega$ and in the complementary of $\Omega$, respectively. Moreover, the axis of $\mathcal C^-(p)$ is part of the geodesic path connecting $p$ to $\mathcal O$, and this fact will allow us to define a diffeomorphism between $S$ and $\partial  {\sf B}_{r}$.
We will prove that the interior of $\mathcal C^-(p)$ is contained in $\Omega$. An analogous argument shows that $\mathcal C^+(p)$ is contained in the complementary of $\Omega$. 

We argue by contradiction. Assume $p\notin {\sf B}_{r}(\mathcal O)$ (otherwise the claim is trivial) and that there exists a point $q \in \mathcal C^-(p) \cap \partial {\sf B}_{t}(\mathcal O)$ such that the geodesic path $\gamma$ connecting $q$ to $p$ is not contained in $\Omega$. Let $z$ be a point on $\gamma$ which does not belong to the closure of $\Omega$.  Let 
$$
\omega:=\frac{1}{d(p,q)}\tau_{e_n}^{q}(\exp_{q}^{-1}(p))
$$
and consider the critical hyperplane $\pi_{\omega}$ in the direction $\omega$. Since $z$ does not belong to the closure of $\Omega$, the method of the moving planes \lq\lq stops\rq\rq\ before reaching $z$ and therefore $z\in \pi_{\omega,s_z}$ for some $s_z\leq m_{\omega}$. Moreover, by construction $q\in \pi_{\omega,s_q}$ with $s_q\geq s_0$, where $s_0$ is such that $\mathcal O\in  \pi_{\omega,s_0}$.   Since $s_z-s_q=d(z,q)$ and $d(z,\mathcal O)\geq r$ we have
$$
d(\mathcal O, \pi_\omega) = m_{\omega}-s_0 \geq s_z -s_0\geq s_z-s_q=d(z,q)\geq d(z,\mathcal O)-d(\mathcal O,q)=d(z,\mathcal O)-t \geq r-t \,;
$$
being $0<t<r- C\oscH$ and from lemma \ref{lemmarta}, we obtain 
$$
C \oscH<r-t \leq d(\mathcal O, \pi_\omega)  \leq C \oscH \,,
$$
which gives a contradiction.

We notice that by fixing any $t=r-\ep/2$, from the argument above we have that for any $p \in S$ the geodesic path connecting $p$ to $\mathcal O$ is contained in $\Omega$. This implies that there exists a $C^2$-regular map $\Psi: \partial {\sf B}_{r}(\mathcal O) \to \mathbb{R}$ such that
$$
F(p) = \exp_x(\Psi(p)N_p) \,,
$$
defines a $C^2$-diffeomorphism  from ${\sf B}_r$ to $S$. 

Now we make a suitable choice of $t$ in order to prove that 
\begin{equation}\label{fristLipbound}
\|\Psi\|_{C^1}\leq C({\rm osc}\,H)^{1/2}\,. 
\end{equation} 
Indeed, by choosing $t=r - \sqrt{C\oscH}$ we have that for any $p \in S$ there exists a uniform cone of opening $\pi - \sqrt{C\oscH}$ with vertex at $p$ and axis on the geodesic connecting $p$ to $\mathcal O$. This implies that $\Psi$ is locally Lipschitz and the bound \eqref{fristLipbound} on $\|\Psi\|_{C^1}$ follows (see also \cite[Theorem 1.2]{CFMN}).

Finally we prove the optimal linear bound $\|\Psi\|_{C^{1,\alpha}}\leq C{\rm osc}\,H$ by using elliptic regularity. Let $\phi\colon U\to \partial{\sf B}_r$ be a local parametrization of $\partial{\sf B}_r$, $U$ being an open set of $\RR^{n-1}$. By the first part of the proof, $F\circ \phi$ gives a local parametrization of $S$. A standard computation yields that we can write 
$$
L(\Psi\circ \phi)=H(F\circ\phi)-H_{\mathsf{B}_r}
$$
where $H_{\mathsf{B}_r}$ is the mean curvature of $\partial \mathsf{B}_r$ and $L$ is an elliptic operator which, thanks to the bounds on $\Psi$ above, can be seen as a second order linear operator acting on $\Psi\circ \phi$. Then \cite[Theorem 8.32]{GT} implies the bound on the $C^{1,\alpha}$-norm of $\Psi$, as required. 

\appendix

\section{A general result on Riemannian manifolds with boundary}\label{appendix}
Let $(M,g_M)$ be a $\kappa$-dimensional orientable compact Riemannian $C^2$-manifold with boundary. For $\delta, r\in \RR^+$, $z\in M$ we denote
$$
M^\delta = \{p\in M\,:\ d_M(p,\pa M) > \delta \}\,,\quad \mathcal{B}_r(z)=\{p\in M\,\,:\,\, d_M(z,p)<r\}\,,
$$
where $d_M$ is the geodesic distance on $M$ induced by $g$.

\begin{proposition} \label{prop harnack chain}
Assume that there exist positive constants $c$ and $\delta_0$ such that
\begin{equation} \label{Dr lower bound}
|\mathcal{B}_r(z)|_{g_M} \geq cr^\kappa,
\end{equation}
and $\mathcal{B}_r(z)$ belongs to the image of the exponential map, for every $z\in M^{\delta}$ and  $0<r\leq \delta<\delta_0$. Fix $p$ and $q$ in a connected component of $M^{\delta}$. Then there exists a piecewise geodesic path $\gamma\colon  [0,1]\to M^{\delta/2}$ connecting $p$ and $q$ of length bounded by $\delta N_{\delta}$ where 
\begin{equation}\label{n 0}
N_\de:=\max\left(4,\frac{2^{\kappa} |M|_{g_M}}{c \de^{\kappa}} \right) \,.
\end{equation}
\end{proposition}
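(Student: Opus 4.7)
The plan is a Harnack-chain argument: first discretize a continuous path from $p$ to $q$ in $M^\delta$ into short geodesic segments, then extract a minimal sub-chain, and finally bound its length via a disjoint-balls packing argument.

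First, since the function $d_M(\cdot,\pa M)$ is continuous, the set $M^\delta$ is open in $M$ and hence locally path-connected; so $p$ and $q$ can be joined by a continuous path $\sigma\colon[0,1]\to M^\delta$. I would use uniform continuity of $\sigma$ on the compact interval to choose a partition $0=t_0<\cdots<t_k=1$ with $d_M(\sigma(t_i),\sigma(t_{i+1}))\le \delta/2$, and set $y_i:=\sigma(t_i)\in M^\delta$. Because $\mathcal{B}_{\delta}(y_i)$ lies in the image of the exponential map by hypothesis, the minimizing geodesic from $y_i$ to $y_{i+1}$ is well-defined, and for any $z$ on it the triangle inequality gives $d_M(z,\pa M)\ge d_M(y_i,\pa M)-d_M(y_i,z)\ge \delta-\delta/2=\delta/2$, so the resulting piecewise geodesic lies entirely in $M^{\delta/2}$.

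Next, among all piecewise geodesics from $p$ to $q$ of this form (vertices in $M^\delta$, consecutive distances $\le \delta/2$), I would pick one minimizing the number $J+1$ of vertices, labeled $p=p_0,\dots,p_J=q$. The key claim is that, by minimality, $d_M(p_i,p_j)>\delta/2$ whenever $|i-j|\ge 2$; otherwise, for such a pair $i<j$, the direct minimizing geodesic from $p_i$ to $p_j$ would have length $\le \delta/2$ and would still lie in $M^{\delta/2}$ by the same triangle-inequality argument, so deleting the intermediate vertices $p_{i+1},\dots,p_{j-1}$ would strictly reduce the vertex count and contradict minimality.

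Finally, the geodesic balls $\mathcal{B}_{\delta/4}(p_i)$ with $i$ even are pairwise disjoint by the separation just established, and \eqref{Dr lower bound} gives $|\mathcal{B}_{\delta/4}(p_i)|_{g_M}\ge c(\delta/4)^\kappa$. Summing over the $\lceil (J+1)/2\rceil$ disjoint balls yields $J\le C(\kappa)\,|M|_{g_M}/(c\delta^\kappa)$ for an explicit dimensional constant $C(\kappa)$, and the total length is at most $J\cdot \delta/2$, which is bounded by $\delta N_\delta$ once $C(\kappa)$ is absorbed; the constant $4$ appearing in the definition of $N_\delta$ covers the degenerate case $d_M(p,q)\le \delta/2$, for which a single geodesic segment suffices. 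I expect the most delicate step to be the minimality/separation argument in the previous paragraph, since it is precisely what turns the chain into a packing and lets the volume hypothesis \eqref{Dr lower bound} control $J$; the rest of the proof is bookkeeping with the triangle inequality to keep every segment and every candidate shortcut inside $M^{\delta/2}$.
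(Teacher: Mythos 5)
Your argument is correct in substance and follows the same strategy as the paper's proof: both run a chain of pairwise disjoint geodesic balls along a continuous path joining $p$ to $q$ inside $M^{\delta}$, use the triangle inequality to keep every geodesic segment in $M^{\delta/2}$, and then invoke the volume lower bound \eqref{Dr lower bound} to count the balls. The only genuine difference is how disjointness is obtained: the paper constructs tangent balls of radius $\delta/2$ directly along the path (consecutive centers at distance $\delta$, each accounting for a path segment of length exactly $\delta$), whereas you first discretize, then minimize the number of vertices and deduce the separation $d_M(p_i,p_j)>\delta/2$ for $|i-j|\geq 2$; both mechanisms are standard and your minimality argument is clean. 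The one point that does not go through as written is quantitative: taking balls of radius $\delta/4$ centered at every other vertex gives roughly $J\leq 2\cdot 4^{\kappa}\,|M|_{g_M}/(c\,\delta^{\kappa})$, hence a length bound of order $\delta\cdot 4^{\kappa}|M|_{g_M}/(c\,\delta^{\kappa})$, which exceeds $\delta N_{\delta}$ by a factor $2^{\kappa}$ with $N_{\delta}$ as literally defined in \eqref{n 0}; a dimensional constant cannot simply be ``absorbed'' into a quantity that is specified explicitly. This is harmless for the applications (only the form of the bound is used downstream), but to recover the stated constant you would either have to enlarge $N_{\delta}$ accordingly or switch to the paper's tangent-ball bookkeeping, where each disjoint ball of radius $\delta/2$ is charged a path segment of length $\delta$.
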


\begin{proof}
Let $\tilde \ga=\tilde \ga(t)$ be a continuous path connecting $p$ and $q$  in $M^\de$. Following the approach in \cite[Lemma 3.2]{JEMS}, we can construct a chain of {\em pairwise disjoint} geodesic balls $\{\mathcal{B}_1,\dots, \mathcal{B}_{I}\}$ of radius $\frac{\delta}{2}$ such that: $\mathcal{B}_1$ is centered at $p$; $\mathcal{B}_i$
is centered at $c_i=\tilde \gamma(t_i)$; the sequence ${t_i}$ is increasing; $\mathcal{B}_I$ contains $q$; $\mathcal{B}_i$ is tangent to $\mathcal{B}_{i+1}$ for any $i=1,\ldots,I-1$.
Since
\begin{equation*}
\Big{|} \bigcup_{i=1}^I \mathcal{B}_{i} \Big{|}_{g_M} \leq |M|_{g_M},
\end{equation*}
from \eqref{Dr lower bound} we get $I \leq N_\de$.
For every $i$ we choose a tangency point $p_i$ between 
$\mathcal{B}_i$ and $\mathcal{B}_{i+1}$. The piecewise geodesic path $\gamma$ is then constructred by connecting $c_i$ with $p_i$ and $p_i$ with $c_{i+1}$ by using geodesic radii, for $i=1,\dots I-2$, and connecting $c_{I-1}$ with $q$ by using a geodesic path contained in $\mathcal{B}_I$.
Hence
\begin{equation*}
{\rm length}(\ga) \leq I \delta \leq \de N_\de  \,,
\end{equation*}
as required. 
\end{proof}

In the next proposition we give an upper bound of the diameter of $M$ when  $\partial M=\O$.  The proof of the next proposition is analogue to the one of proposition \ref{prop harnack chain} and it is omitted. 
\begin{proposition} \label{diametro}
Assume $\partial M=\O$ and that there exist a constant $c,\delta>0$ such that
\begin{equation} \label{Dr lower bound}
|\mathcal{B}_r(z)|_{g_M} \geq cr^\kappa,
\end{equation}
for every $z\in M$ and $0<r\leq \delta$. Let $p$ and $q$ in $M$. Then there exists a piecewise geodesic path $\gamma\colon  [0,1]\to M$ connecting $p$ and $q$ of length bounded by $\delta N_{\delta}$ where 
\begin{equation}\label{n 0}
N_\de:=\max\left(4,\frac{2^{\kappa} |M|_{g_M}}{c \de^{\kappa}} \right) \,.
\end{equation}
In particular the diameter of $M$ is bounded by  $\delta N_{\delta}$. 
\end{proposition}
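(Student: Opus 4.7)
The plan is to mirror the argument of Proposition \ref{prop harnack chain}, removing the technical complications that came from staying inside $M^{\delta/2}$ (which are unnecessary here since $\partial M=\emptyset$). Since $M$ is connected (being a compact Riemannian manifold considered as a single object, or argued componentwise), for any pair $p,q\in M$ one can start with an arbitrary continuous path $\tilde\gamma\colon[0,1]\to M$ with $\tilde\gamma(0)=p$ and $\tilde\gamma(1)=q$.

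First I would construct a chain of \emph{pairwise disjoint} open geodesic balls $\{\mathcal{B}_1,\dots,\mathcal{B}_I\}$ of radius $\delta/2$ by the same greedy procedure used in \cite{JEMS}: set $c_1=p$ and $\mathcal{B}_1=\mathcal{B}_{\delta/2}(c_1)$; having defined $c_i$, let $t_i=\sup\{t\in[0,1]:\tilde\gamma(t)\in \overline{\mathcal{B}_{\delta/2}(c_i)}\}$ and put $c_{i+1}=\tilde\gamma(t_i)$, $\mathcal{B}_{i+1}=\mathcal{B}_{\delta/2}(c_{i+1})$. By construction consecutive balls are internally tangent (their centers are at distance exactly $\delta/2$) and the process terminates with some $\mathcal{B}_I$ containing $q$. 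Then I would verify pairwise disjointness: because the chain follows the path greedily and each new center lies on the boundary of the previous ball, the balls $\mathcal{B}_i$ are mutually disjoint (this is the same verification as in the cited proof).

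Next I would bound $I$ by the volume hypothesis: since the $\mathcal{B}_i$ are disjoint,
\[
\sum_{i=1}^{I}|\mathcal{B}_{i}|_{g_M}\leq |M|_{g_M},
\]
and each $|\mathcal{B}_i|_{g_M}\geq c(\delta/2)^\kappa$ by \eqref{Dr lower bound}, which yields $I\leq 2^\kappa|M|_{g_M}/(c\delta^\kappa)\leq N_\delta$. I would then pick a tangency point $p_i\in \overline{\mathcal{B}_i}\cap\overline{\mathcal{B}_{i+1}}$ for $i=1,\dots,I-1$ and build the piecewise geodesic $\gamma$ by concatenating the two radial geodesic segments $c_i\to p_i$ and $p_i\to c_{i+1}$ inside $\mathcal{B}_i\cup\mathcal{B}_{i+1}$, finishing with a geodesic segment inside $\mathcal{B}_I$ from $c_I$ to $q$. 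Each of the $I$ sub-arcs has length at most $\delta$, so $\mathrm{length}(\gamma)\leq I\delta\leq \delta N_\delta$.

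The diameter bound is then an immediate corollary: for arbitrary $p,q\in M$ one has $d_M(p,q)\leq \mathrm{length}(\gamma)\leq \delta N_\delta$. The only conceptual step with any substance is the disjointness of the greedy chain, which is inherited verbatim from Proposition \ref{prop harnack chain}; there is no real obstacle, since dropping the requirement $\gamma\subset M^{\delta/2}$ actually simplifies the argument.
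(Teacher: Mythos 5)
Your overall strategy is exactly the paper's: the paper omits this proof, stating only that it is analogous to Proposition \ref{prop harnack chain}, and that proof is precisely the greedy chain of pairwise disjoint radius-$\delta/2$ balls along a connecting path, the volume-packing bound $I\leq N_\delta$, and the concatenation of radial geodesics through tangency points. So there is no divergence of method to discuss.

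There is, however, a concrete error in the one step you single out as having substance. You define $t_i=\sup\{t:\tilde\gamma(t)\in\overline{\mathcal{B}_{\delta/2}(c_i)}\}$ and set $c_{i+1}=\tilde\gamma(t_i)$, so that $d_M(c_i,c_{i+1})=\delta/2$. Two open balls of radius $\delta/2$ whose centers are at distance $\delta/2$ are \emph{not} internally tangent; they overlap on a set of positive measure, so the chain is not pairwise disjoint and the inequality $\sum_i|\mathcal{B}_i|_{g_M}\leq|M|_{g_M}$ --- hence the bound $I\leq N_\delta$ --- does not follow as written. The correct greedy step (and the one needed to reproduce the tangency claimed in Proposition \ref{prop harnack chain}) is to take $t_i$ to be the last time $\tilde\gamma$ lies in the closed ball of radius $\delta$ (twice the packing radius) about $c_i$: then $d_M(c_i,c_{i+1})=\delta$, consecutive radius-$\delta/2$ balls are tangent, and non-consecutive centers $c_j$ with $j>i+1$ satisfy $d_M(c_i,c_j)>\delta$ because $t_{j-1}>t_i$, which gives genuine pairwise disjointness. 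With that correction each link $c_i\to p_i\to c_{i+1}$ still has length $\delta$ and the final segment inside $\mathcal{B}_I$ has length at most $\delta/2$, so the length bound $I\delta\leq\delta N_\delta$ and the diameter estimate go through unchanged.
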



\end{document}